\documentclass[reqno,11pt]{amsart}
\usepackage{amsmath,amssymb,amsthm,graphicx,mathrsfs,url}
\usepackage{wrapfig}
\usepackage{enumitem}
\usepackage{mathtools}
\usepackage{xparse}
\usepackage[usenames,dvipsnames]{xcolor}
\usepackage[colorlinks=true,linkcolor=Red,citecolor=Green]{hyperref}
\usepackage[super]{nth}
\usepackage[open, openlevel=2, depth=3, atend]{bookmark}
\hypersetup{pdfstartview=XYZ}
\usepackage[font=footnotesize]{caption}
\usepackage{a4wide}

\theoremstyle{plain}
\newtheorem{theo}{Theorem}
\newtheorem{prop}{Proposition}[section]
\newtheorem{lemm}[prop]{Lemma}
\newtheorem{corr}[prop]{Corollary}

\theoremstyle{definition}
\newtheorem{defi}[prop]{Definition}

\theoremstyle{remark}
\newtheorem{rem}{Remark}

\numberwithin{equation}{section}


\DeclareMathOperator{\supp}{supp}

\DeclareMathOperator{\Op}{Op}

\def\d{\,\mathrm{d}}
\def\C{\mathbb{C}}

\def\eps{\varepsilon}

\def\la{\left\vert}
\def\lA{\left\Vert}

\def\lB{\left\{}
\def\lp{\left(}
\def\ls{\left\langle}

\def\N{\mathrm{N}}

\def\R{\mathbb{R}}
\def\ra{\right\vert}
\def\rA{\right\Vert}

\def\rB{\right\}}
\def\rp{\right)}
\def\rs{\right\rangle}

\renewcommand{\Re}{\mathrm{Re}}
\renewcommand{\Im}{\mathrm{Im}}


\newcommand{\mc}{\mathcal}
\newcommand{\rr}{\mathbb{R}}
\newcommand{\nn}{\mathbb{N}}

\newcommand{\pl}{\partial}

\newcommand{\bbar}{\overline}

\newcommand{\cjd}{\rangle}
\newcommand{\cjg}{\langle}

\NewDocumentCommand{\ceil}{s O{} m}{%
  \IfBooleanTF{#1} 
    {\left\lceil#3\right\rceil} 
    {#2\lceil#3#2\rceil} 
}

\let\Im=\Imag

\let\Re=\Real

\DeclareMathOperator{\WF}{WF}

\makeatletter\let\@wraptoccontribs\wraptoccontribs\makeatother

\title[A paradifferential approach for hyperbolic dynamical systems]{A paradifferential approach for hyperbolic dynamical systems and applications}
\author{Colin Guillarmou}
\email{colin.guillarmou@math.u-psud.fr}
\address{Laboratoire de Math\'ematiques d'Orsay, Univ. Paris-Sud, CNRS, Universit\'e Paris-Saclay, 91405 Orsay, France}
\author{Thibault de Poyferr\'e}
\email{tdepoyferre@msri.org}
\address{MSRI Berkeley, 17 Gauss Way, Berkeley, CA 94720}

\contrib[with an appendix by]{Yannick Guedes Bonthonneau}
\email{bonthonneau@math.univ-paris13.fr}
\address{LAGA, Institut Galil\'ee, 99 avenue Jean Baptiste cl\'ement, 93430 Villetaneuse, France.}

\begin{document}
\maketitle

\begin{abstract}
We develop a paradifferential approach for studying non-smooth hyperbolic dynamics on manifolds and related non-linear PDE from a microlocal point of view. As an application, we describe the 
microlocal regularity, i.e the $H^s$ wave-front set for all $s$, of the unstable bundle $E_u$ for an Anosov flow. We also recover rigidity results of Hurder-Katok and Hasselblatt in the Sobolev class rather than H\"older: there is $s_0>0$ such that if $E_u$ has $H^s$ regularity for $s>s_0$ then it is smooth (with $s_0=2$ for volume preserving $3$-dimensional Anosov flows). It is also shown in the Appendix that it can be applied to deal with non-smooth flows and potentials. This work could serve as a toolbox for other applications.
\end{abstract}

\section{Introduction}
Consider a smooth compact manifold $M$ (equipped with a fixed metric $g$) and let $X$ be a smooth non-vanishing 
vector field generating an Anosov flow $\varphi_t$. This means that the tangent bundle $TM$ has a $d\varphi_t$ invariant decomposition  
\[ TM=\rr X \oplus E_u\oplus E_s\]
such that there is $C>0$ and $\nu>0$ so that for each $t\geq 0$
\begin{equation}\label{Anosovdef}
\begin{gathered}
\forall x\in M, \forall w\in E_u(x),\quad \| d\varphi_{-t}(x).w\|\leq Ce^{-\nu t}\|w\|\\ 
\forall x\in M, \forall w\in E_s(x),\quad \| d\varphi_t(x).w\|\leq Ce^{-\nu t}\|w\|.
\end{gathered}\end{equation}
The stable and unstable bundles $E_s$ and $E_u$ are only H\"older continuous \cite{HPS70}. A sharp regularity statement on the H\"older exponent of $E_u/E_s$ has been obtained by Hasselblatt \cite{Hasselblatt-regularityI, Hasselblatt-regularityII} in terms of bunching of the expansions/contraction exponents of $d\varphi_t$ on $E_u/E_s$; we refer to these articles for a history and references of the analysis of the regularity. 

A particularly interesting case is when $X$ is the generator of the geodesic flow on a negatively curved manifold. That class fits into the more general class of \emph{contact Anosov flows}, defined by the requirement that  the Anosov form~$\alpha$, defined by
\[\alpha(X)=1, \quad \alpha(E_u\oplus E_s)=0\]
is a contact form (which means that it is smooth and $d\alpha|_{\ker \alpha}$ is non-degenerate and $i_Xd\alpha=0$).
A striking rigidity result of Hurder-Katok \cite{Hurder-Katok} is that for contact Anosov flow 
in dimension $3$ (for example the geodesic flow of a negatively curved surface), the stable and unstable bundles $E_s$ and $E_u$ are always\footnote{Here, ${\rm Zyg}$ is the Zygmund modulus of continuity.}~$C^{1,O({\rm Zyg})}$, and if they are~$C^{1,o({\rm Zyg})}$ they are necessarily $C^\infty$. In particular $C^2$ regularity implies $C^\infty$ regularity for $E_u$ and $E_s$.
On the other hand Ghys \cite{Ghys87} proved  that if $E_u$ and $E_s$  are~$C^\infty$, then the flow is conjugate to the geodesic flow on the circle bundle of a hyperbolic surface, modulo finite covering and quotients, and smooth time change. To conclude, $E_s\in C^2$ and $E_u\in C^2$ implies that the flow is essentially a homogeneous dynamical system. The corresponding rigidity result has been proved\footnote{It is claimed in the paper that a $C^k$ regularity for $k$ large enough depending only on the dimension is sufficient.} by Benoist-Foulon-Labourie \cite{Benoist1992} in higher dimension provided $E_s, E_u\in C^\infty$. 
Hasselblatt \cite{Hasselblatt1992PAMS} also proved ridigity results in any dimension, showing that if $E_u\in C^k$ for $k$ larger than a constant depending on the bunching of the contraction/expansion exponents appearing in \eqref{Anosovdef}, then the bundles are smooth. It is a folklore conjecture that $C^2$-regularity of $E_u,E_s$ for a contact Anosov flow should imply $C^\infty$ regularity and then, using \cite{Benoist1992}, that the flow is smoothly conjugate to a homogeneous dynamical system.   

In this work, one of the aspects we propose to study is the microlocal regularity of $E_u$ (and $E_s$) through the introduction of paradifferential methods.  By microlocal regularity we mean its regularity in phase space, i.e. $T^*M$, which can be encoded in the notion of $H^s$ wave-front set introduced by H\"ormander \cite[Chapter 8]{Hoe03}.  Roughly speaking, it encodes where a function
is singular (the position $x$) but also in which direction it is singular (the momentum or Fourier variable $\xi$ decay). For example, the regularity of the unstable bundle for a Riemannian negatively curved surface $\Sigma$ is encoded by a function $r\in C^\gamma(S\Sigma)$ satisfying a Ricatti equation 
\[Xr+r^2+K=0\]
where $K$ is the Gauss curvature. It is non-linear, and thus not very suitable for microlocal study 
using the standard pseudo-differential operator calculus. We thus employ the paradifferential calculus of Bony \cite{Bony1981} to approach this problem, and in particular we show that the formalism of radial estimates used recently by Dyatlov-Zworski \cite{DZ16a} for hyperbolic dynamics 
fits well in that picture. They allow to describe the sharp Sobolev regularity of the unstable bundle, its wave-front set and to recover the rigidity property mentionned above but in a Sobolev class: if the bundle $E_s$ is $H^s$ for $s$ large enough (depending on the maximal/minimal expansions rates of the flow), then $E_u$ is smooth provided the flow is smooth.

More precisely this method allows to show (see Theorem \ref{Thregulrigid1} and \ref{Thregulrigid2}) that 
\[{\rm WF}(E_u)\subset E_u^*,\quad  {\rm WF}(E_s)\subset E_s^*\]
if $E_u^*,E_s^*\subset TM$ are defined by $E_u^*(E_u \oplus \R X)=0$ and $E_s^*(E_s \oplus \R X)=0$. This fact could be useful for the microlocal approach of Ruelle resonances, in particular in the understanding of applying unstable derivatives to irregular distributions.
We also recover Sobolev regularity on $E_u,E_s$ and the rigidity results for this regularity (if it is $H^s$ for $s>0$ large enough then it is as smooth as the flow) using this PDE approach.
These regularity/rigidity properties were studied for example by Hasselblatt, Hurder-Katok, De la LLave (\cite{Hurder-Katok,Hasselblatt1992PAMS, Hasselblatt-regularityI,DLL2001}).

The method we employ is based on propagation estimates: we use Bony's propagation of singularity for paradifferential operators \cite{Bony1981} and we show a version of the radial point estimates for paradifferential operators. This could have other applications. 
Radial point estimates in the classical cases were introduced by Melrose \cite{Melrose1994} and developed also by Vasy \cite{Va13}, Dyatlov-Zworski \cite{DZ16a,DZ19} in the particular settings of Anosov flows, see also Dyatlov-Guillarmou \cite{DG16} for more general cases (Axiom A flows), Wang \cite{Wang-Jian} in Besov spaces and Guedes Bonthonneau-Lefeuvre \cite{Bonthonneau-Lefeuvre} in the H\"older Zygmund classes.\\

Our method also allows to understand the sharp Sobolev regularity and rigidity phenomena for solutions $U$ of a general Riccati equation of the form 
\[ \mc{L}_XU + Q(x,U)=0\]
where $U$ is a H\"older sections of an ${\rm End}(E)$ for some smooth bundle $E$ on which the flow $\varphi_t$ admits a (linear) lifted action $\tilde{\varphi}_t$, $Q$ is a quadratic polynomial in $U$ with smooth coefficients in $x$, $\mc{L}_X$ is the Lie derivative in the direction of an Anosov vector field $X$. If the regularity of $U$ is larger than a certain threshold $s$ depending on $\nu_{s/u}^{\min/\max}$ and on $\pl_UQ(\cdot,U)$, then it is smooth.
More generally, the method applied to general nonlinarities, i.e. equations of the form 
$\mc{L}_XU+F(x,U)=0$ for some smooth functional $F:M\times E\to \R$. See Remark \ref{Remarquegenerale}.\\

In turn, paradifferential methods are well-suited to study PDEs with non-smooth coefficients, and we also show in Section \ref{nonsmoothpotentials} for H\"older potentials (Proposition \ref{nonsmoothpotential}), and more generally in the Appendix by Guedes Bonthonneau for H\"older flows and potentials (Theorem \ref{nonsmoothflows}), that this provides a good microlocal method for analyzing the resolvent $(X+V+\lambda)^{-1}$ of non-smooth Anosov vector field $X$ and non-smooth potentials $V$. In particular, combining the paradifferential calculus with the microlocal approach of Faure-Sj\"ostrand \cite{FS11}, one recovers the fact that a theory of Ruelle resonances can be done also for H\"older flows and potentials in a spectral strip of finite size, as was shown by Butterley-Liverani \cite{BL07} and Adam-Baladi \cite{Adam-Baladi} for Anosov flows, or by Blank-Keller-Liverani \cite{BKL02}, Baladi-Tsujii \cite{BT07} and Gou\"ezel-Liverani \cite{GL08} for hyperbolic diffeomorphisms. 
The case of non-smooth potential is quite natural as the geometric potentials given by the unstable or stable Jacobians are only H\"older, the proof for a smooth flow is explained in Section \ref{nonsmoothpotentials} and the general case for a H\"older Anosov flow is explained by Guedes Bonthonneau in the Appendix using the construction of escape function for non-smooth flows in \cite{Bon18}.

We hope more generally that this paradifferential approach could be applied more systematically for non-smooth dynamics and related non-linear equations.\\

\textbf{Acknowledgements.} This project has received funding from the European Research Council (ERC) under the European Union’s Horizon 2020 research and innovation programme (grant agreement No. 725967). This material is based upon work supported by the National Science Foundation under Grant No. DMS-1440140 while C.Guillarmou was in residence at the Mathematical Sciences Research Institute in Berkeley, California, during the Fall 2019 semester.
This material is based upon work supported by the National Science Foundation under Grant No. DMS-1928930 while the T. De Poyferr\'e participated in a program hosted by the Mathematical Sciences Research Institute in Berkeley, California, during the spring 2021 semester.

\section{Paradifferential and $S_{1,1}$ Calculi}	\label{Para}
Paradifferential calculus is a way of blending nonlinear harmonic analysis with microlocal techniques. From the pseudodifferential viewpoint, it corresponds to quantization of symbols belonging to H\"ormander's forbidden class~$S_{1,1}$, but satisfying some conditions salvaging the existence of a calculus. We start with a review of this calculus on manifolds, since such a reference does not seem to be available.
The classical references for the Euclidean case are the original papers of Bony ~\cite{Bony1981}, Meyer \cite{Meyer81}, Bourdaud \cite{Bourdaud1988}, and H\"ormander \cite{Hormander88,Hormander-nonlinear}. We also refer to the book of Taylor \cite{Taylor_nonlinear_book} that brings a discussion on manifold. 
In what follows, $M$ will be a compact, smooth manifold of dimension~$n$, and we equip it with a smooth Riemannian metric $g_0$ that allows to put norms on the fibers of the tangent and cotangent bundles $TM$ and $T^*M$, as well as a smooth Riemannian measure ${\rm dv}_g$ on $M$. The Sobolev spaces $H^s(M)$ can then be defined as $H^s(M):=\{u\in C^{-\infty}(M)\,|\, (1+\Delta_{g_0})^{s/2}u\in L^2(M)\}$, or equivalently $u\in H^s(M)$ if and only if $u|_{U_j}\in H^s(U_j)$ for all $j$ where $(U_j)_j$ is a covering by charts diffeomorphic to the unit ball $B(0,1)\subset \R^n$. The H\"older space $C^{\alpha}(M)$ is simply $C^k(M)$ is $\alpha=k\in \N$, and
 consists more generally of functions $f$ whose $k:=[\alpha]$ derivative is $(\alpha-k)$-H\"older in charts, i.e $\sup_{x\not=y}|f(x)-f(y)|/|x-y|^{\alpha}<\infty$. Finally, $\mc{F}$ will denote the usual Fourier transform.

\subsection{H\"ormander $\tilde{\Psi}_{1,1}$ Calculus.}

\begin{defi}
For an order~$m\in\R$ and $\rho,\delta\in [0,1]$, the class $S^m_{\rho,\delta}(M)$ consists in the space of smooth functions~$a(x,\xi)$ on the cotangent bundle~$T^*M$, satisfying estimates of the form
\[ \forall\alpha,\beta, \exists C_{\alpha,\beta}>0, \forall (x,\xi)\in T^*M,\quad \la \partial_x^\alpha\partial_\xi^\beta a(x,\xi)\ra\leq C_{\alpha,\beta}\ls\xi\rs^{m+\delta\la\alpha\ra-\rho\la\beta\ra}<\infty,\]
in local charts on $M$, and for any multi-indices~$\alpha$ and~$\beta$. Here $\cjg\xi\cjd:=(1+|\xi|^2)^{1/2}$.
\end{defi}

The set~$S_{1,1}(M)=\cup_{m\in\R}S^m_{1,1}(M)$ is then a filtered Fréchet $*$-algebra, with seminorms given by the best~$C_{\alpha,\beta}$ in the estimates above. It is invariant by changes of coordinates.

It contains the symbols of differential operators. The residual symbols~$a\in S^{-\infty}(M)=\cap_{m\in\R}S^m_{1,1}(M)$ are those smooth functions which are decaying faster than any $\cjg \xi\cjd^{-N}$ for $N>0$ in the fibers of~$T^*M$.  

One can now define a quantization ${\rm Op}$  on $M$ by using a partition of unity $(\psi_j)_j$ associated to a finite family of local charts $U_j\simeq B(0,\eps)\subset \R^n$, some functions $\tilde{\psi}_j\in C_c^\infty(U_j)$ so that $\tilde{\psi}_j=1$ on ${\rm supp}(\psi_j)$, 
and using the left quantization in charts 
\[ {\rm Op}(a)f(x)=\sum_{j}\frac{1}{(2\pi)^n}\int_{\R^n}\int_{U_j} e^{i(x-y).\xi}\psi_j(x)a(x,\xi)\tilde{\psi}_j(y)f(y)dy d\xi.\]
We define the classes $\Psi^{m}_{1,1}(M)$ to be the set of operators $A$ mapping $C^{\infty}(M)$ to $C^{-\infty}(M)$ continuously so that $A={\rm Op}(a)+S$ for some $a\in S^m_{1,1}(M)$ and 
$S$ a smoothing operator, i.e. an operator with smooth Schwartz kernel. The resulting operators are invariant by change of variables, as we shall see below.
The class $\Psi^{m}_{1,1}(M)$ is not very convenient since there is no good composition law, it is not bounded on $H^{s}(M)$ for $s\leq 0$ and it is not stable when taking adjoints. 
However, there is a notion of principal symbol. To see that, recall that the formula for the action of coordinate changes on the symbol of a pseudodifferential operators in $\R^n$
does not involve spatial differentiation of the symbol: 
\begin{lemm}\label{changeofcoord}
 For~$a\in S^m_{1,1}(\R^{n})$ compactly supported in $x$, $\chi\in C_c^\infty(\R^n)$ and~$\phi$ a smooth diffeomorphism of~$\R^n$ equal to the identity outside a compact set,
 \[(\phi^{-1})^* a(x,D_x)\chi(x)\phi^*=a_\phi(x,D_x) \chi\circ \phi^{-1},\]
 where $a_\phi\in S^{m}_{1,1}(\R^n)$ is such that
 \[a_\phi(\phi(x),(d\phi(x)^{-1})^T\xi)-a(x,\xi)\in  S^{m-1}_{1,1}(\R^n).\]
\end{lemm}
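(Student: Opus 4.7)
The plan is the standard Kuranishi change-of-variables trick, with care taken to verify that the asymptotic expansion of amplitudes survives in the forbidden class $S_{1,1}$. First, unfold the left-quantization in $(\phi^{-1})^* a(x,D_x)\chi(x)\phi^*$ and substitute $w = \phi(z)$ in the spatial integral to obtain
\[
((\phi^{-1})^* a(x,D_x)\chi\,\phi^* u)(y) = \frac{1}{(2\pi)^n}\iint e^{i(\phi^{-1}(y)-\phi^{-1}(w))\cdot\xi}\,a(\phi^{-1}(y),\xi)\,\tilde\chi(w)\,J(w)\,u(w)\,dw\,d\xi,
\]
with $\tilde\chi=\chi\circ\phi^{-1}$ and $J(w)=|\det d\phi^{-1}(w)|$. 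Linearize the phase via $\phi^{-1}(y)-\phi^{-1}(w) = \Phi(y,w)(y-w)$, where $\Phi(y,w):=\int_0^1 d\phi^{-1}(w+t(y-w))\,dt$. Since $\phi$ is a smooth diffeomorphism equal to the identity outside a compact set, $\Phi$ is smooth, bounded, and uniformly invertible in a neighborhood $\mc{U}$ of the diagonal. Insert a smooth cutoff $\theta(y,w)$ supported in $\mc{U}$ and equal to $1$ near the diagonal; the $(1-\theta)$ piece yields a smoothing operator by non-stationary phase in $\xi$. On the $\theta$ piece, perform the fiberwise change of variables $\eta=\Phi(y,w)^T\xi$, and factor $\tilde\chi(w)$ out of the amplitude, leaving an amplitude operator acting on $\tilde\chi u$ with amplitude
\[
q(y,w,\eta) = a\bigl(\phi^{-1}(y),(\Phi(y,w)^T)^{-1}\eta\bigr)\,J(w)\,|\det\Phi(y,w)|^{-1}\,\theta(y,w).
\]

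Next, I reduce $q$ to a pseudodifferential symbol via the amplitude-to-symbol asymptotic formula
\[
a_\phi(y,\eta) \sim \sum_\alpha \frac{1}{\alpha!}\,\pl_\eta^\alpha D_w^\alpha\, q(y,w,\eta)\big|_{w=y},
\]
whose principal term ($\alpha=0$) is $q(y,y,\eta)=a(\phi^{-1}(y),(d\phi^{-1}(y)^T)^{-1}\eta)$, since the Jacobian factors cancel on the diagonal. Setting $y=\phi(x)$ and $\eta=(d\phi(x)^{-1})^T\xi$, the identity $(d\phi^{-1}(\phi(x))^T)^{-1}=d\phi(x)^T$ yields $a_\phi(\phi(x),(d\phi(x)^{-1})^T\xi)=a(x,\xi)$ modulo $S^{m-1}_{1,1}$, as claimed.

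The main delicate point is justifying this expansion in $S_{1,1}$, since $\pl_x$ alone does not improve $\xi$-decay in the forbidden class. The gain of one order per term must come precisely from the pairing $\pl_\eta D_w$: a single $D_w$ applied to $a(\phi^{-1}(y),(\Phi(y,w)^T)^{-1}\eta)$ produces, by the chain rule, $\pl_\xi a$ (of order $m-1$) multiplied by the $O(|\eta|)$ derivative of the linear map $(\Phi^T)^{-1}\eta$, which keeps the order at $m$; the subsequent $\pl_\eta$ then saves one more order, so that $\pl_\eta^\alpha D_w^\alpha q|_{w=y}$ lies in $S^{m-|\alpha|}_{1,1}$. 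The Taylor remainder of $q$ in $w$ at $w=y$ is handled by the standard integration by parts against $(w-y)^\alpha e^{i(y-w)\eta}=(-D_\eta)^\alpha e^{i(y-w)\eta}$, the same accounting showing the $N$-th remainder belongs to $S^{m-N}_{1,1}$.
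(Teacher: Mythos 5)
Your proof is correct and takes essentially the same route as the paper: both use the Kuranishi factorization of the phase to rewrite the conjugated operator as an amplitude operator and then perform the amplitude-to-symbol reduction, with the crucial point being that no $\partial_x a$ factors (which would be fatal in $S_{1,1}$) appear in the expansion — only $\partial_\xi^\alpha a$ together with derivatives of the linear-in-$\eta$ Kuranishi map, each $\partial_\eta D_w$ pair gaining one order. The paper compresses this by citing Shubin's Theorems 3.1 and 4.1 (with a footnote that they extend from $S_{1,0}$ to $S_{1,1}$), whereas you spell out the bookkeeping directly.
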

\begin{proof} We can use the formula for the change of variable for a left-quantized symbol given by 
\cite[Theorem 4.1]{Shu01}: if $A_1=(\phi^{-1})^* a(x,D_x)\chi(x)\phi^*$
\[ A_1u(x)=\int e^{i(x-y)\xi}a(\phi^{-1}(x),\psi(x,y)\xi)\chi(\phi^{-1}(y))|\det(\psi(x,y))|\det d\phi^{-1}(y)|u(y)dyd\xi\]
for some smooth map $\psi(x,y)\in {\rm GL}_n(\R)$ so that $(\phi^{-1}(x)-\phi^{-1}(y)).\psi(x,y)\xi=(x-y).\xi$. As in the proof of \cite[Theorem 3.1]{Shu01}, this can be rewritten under the form 
$A_1u(x)=\int_{\R^n}e^{i(x-y).\xi}a_\phi(x,\xi)\chi(\phi^{-1}(y))u(y)dyd\xi$ where 
\[a_\phi(x,\xi)=\int\int e^{i(x-y).(\eta-\xi)}a(\phi^{-1}(x),\psi(x,y)\eta)|\det(\psi(x,y))|\det d\phi^{-1}(y)|d\eta dy\]
and \cite[Theorem 4.1]{Shu01}\footnote{Shubin does not technically considers the case of $S^m_{1,1}(\R^n)$ but the argument readily applies to that case as well.} shows that this is in $S^{m}_{1,1}(\R^n)$ with an expansion as $|\eta|\to \infty$
\[ a_\phi(\phi(x),\eta)\sim \Big(\sum_{\alpha}\frac{1}{\alpha!}\pl_{\xi}^\alpha a(x,d\phi(x)^T\eta)\cdot D_z^\alpha e^{i\phi''_x(z).\eta}|_{z=x}\Big)\chi(\phi^{-1}(x))\] 
with $\phi''_x(z)=\phi(z)-\phi(x)-d\phi(x)(z-x)$. The asymptotic expansion makes sense since each 
$\xi$ derivative improves the $\eta$ decay by one order and $\phi''_x(z)$ vanishes to second order at $z=x$.
\end{proof}
As a direct consequence of this formula, we can define a principal symbol map:

\begin{defi}\label{def-principal-symb}
 On a compact manifold~$M$, the principal symbol map
 \[ \sigma: \Psi_{1,1}^m(M)\rightarrow  S^m_{1,1}(M)/ S^{m-1}_{1,1}(M)\]
 is well defined, with kernel $\Psi_{1,1}^{m-1}(M)$.
\end{defi}
However, without additional restrictions on the symbols, this map is useless as it is not an algebra homomorphism.
\\

 H\"ormander \cite{Hormander88} introduced a subclass 
$\tilde{\Psi}_{1,1}(\R^n)$ of $\Psi_{1,1}(\R^n)$ that is stable by composition and adjoint, and is bounded on $H^s(\R^n)$ for all $s\in \R$. Let us recall its definition and properties, and exlain how this extends to a compact manifold. 
 
Let  $\chi\in C^\infty(\R^{2n})$ 
satisfying $\chi(\eta,\xi)=1$ in $\{|\eta|\leq |\xi|/2, |\xi|\geq 2\}$ and $\supp\chi\subset \{|\eta|<|\xi|, |\xi|\geq 1\}$.
We say that $a\in \tilde{S}^{m}_{1,1}(\R^n)$ if for all $\eps>0$ small, the function
\begin{equation}\label{aeps}
 a_\eps(x,\xi):=\frac{1}{(2\pi)^n}\int\int
e^{i(x-y).\eta}\chi(\xi+\eta,\eps \xi)a(y,\xi)dyd\eta=\mc{F}^{-1}_{\eta\to x}(\chi(\xi+\eta,\eps \xi)\mc{F}(a(\cdot,\xi))(\eta))
\end{equation} 
satisfies the bounds for all $\alpha,\beta, N$ 
\[ |\pl_{x}^\alpha\pl_{\xi}^\beta a_\eps(x,\xi)|\leq C_{\alpha \beta N}\eps^{N}(1+|\xi|)^{m+|\alpha|-|\beta|}.\]
In other words, the microlocalisation of $a(x,\xi)$ (in $x$) in small cones near the twisted diagonal $\xi+\eta=0$ enjoys some decay to all order in the topology of $S^{m}_{1,1}(\R^n)$.
We define $\tilde{\Psi}^{m}_{1,1}(\R^n):={\rm Op}(\tilde{S}^m_{1,1}(\R^n))$ where ${\rm Op}$ is the usual left quantization on $\R^n$. 
\begin{prop}{\cite[Theorem 3.6 and Theorem 4.2]{Hormander88}, \cite[Theor\`eme 3]{Bourdaud1988}}\label{prop:hormander}
An operator $A\in \Psi^{m}_{1,1}(\R^n)$ belongs to $\tilde{\Psi}^{m}_{1,1}(\R^n)$ if and only if 
$A^*\in \Psi^{m}_{1,1}(\R^n)$. An operator $A\in \Psi^{m}_{1,1}(\R^n)$ belongs to $\tilde{\Psi}^{m}_{1,1}(\R^n)$ if and only if $A$ is bounded as a map $H^{s+m}(\R^n)\to H^s(\R^n)$ for all $s\in \R$.
\end{prop}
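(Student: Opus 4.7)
My plan is to analyze the symbol $a\in S^m_{1,1}(\R^n)$ through a paradyadic decomposition in the $\xi$-variable combined with a Fourier cutoff in $x$ separating the twisted-diagonal frequencies $\eta\approx -\xi$ from the rest. This cleanly decouples a paradifferential piece, which is automatically tame, from a residual ``twisted-diagonal'' piece whose smoothing properties are controlled precisely by the tilde condition on $a_\eps$ in \eqref{aeps}.

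\textbf{The forward direction.} Pick a Littlewood--Paley partition $\sum_k \psi_k(\xi)=1$ with $\psi_k$ supported in $|\xi|\sim 2^k$, set $a_k=\psi_k a$, and split $a_k = a_k^{\flat}+a_k^{\sharp}$ with $a_k^{\flat}$ having partial $x$-Fourier support in $|\eta|\leq 2^{k-3}$ (low--high) and $a_k^{\sharp}$ in $|\eta|\geq 2^{k-4}$ (high--high). The low--high operator $A^{\flat}:=\sum_k\Op(a_k^{\flat})$ is a Bony-type paradifferential operator: its dyadic pieces are almost-orthogonal in both $x$- and $\xi$-frequency, so a Cotlar--Stein argument produces bounds $A^{\flat}:H^{s+m}\to H^s$ for every $s\in\R$ together with a convergent asymptotic expansion of its adjoint placing $(A^{\flat})^*$ in $\Psi^m_{1,1}(\R^n)$. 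For the high--high piece, the microlocalization $a_{\eps}$ captures, up to harmless errors, $\sum_{2^{-j}\lesssim\eps}a_k^{\sharp}$, so the hypothesis $|\pl_x^{\alpha}\pl_{\xi}^{\beta}a_{\eps}|\leq C_N\eps^N\langle\xi\rangle^{m+|\alpha|-|\beta|}$ forces $a^{\sharp}:=\sum_k a_k^{\sharp}\in S^{m-N}_{1,1}(\R^n)$ for every $N$, whence $A^{\sharp}$ is smoothing; both Sobolev boundedness and adjoint-in-class follow trivially.

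\textbf{The converses.} The properties ``$A^*\in\Psi^m_{1,1}$'' and ``$A:H^{s+m}\to H^s$ for every $s$'' are each invariant under subtracting a paradifferential operator of order $m$, which by the previous step always satisfies them; hence it suffices to show that if $a=a^{\sharp}$ is concentrated near the twisted diagonal and satisfies either condition, then $a\in\tilde{S}^m_{1,1}(\R^n)$. The formal symbol of $A^*$ contains, modulo smoother terms, the oscillatory integral $\int e^{-iy\cdot\eta}\bar{a}(y,\xi+\eta)\,dy\,d\eta$, and for this to define an $S^m_{1,1}$-symbol one needs polynomial-to-any-order decay of the partial Fourier transform of $a$ in cones around the twisted diagonal---precisely the tilde condition. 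For the Sobolev converse, if the tilde bound fails for some $\eps_0$, I would build oscillatory wave packets $u_k=e^{ix\cdot\xi_k}\chi(\cdot-x_k)$ with $\xi_k$ on a witness sequence where the $\eps_0$-microlocalized symbol is large, and test the ratio $\|Au_k\|_{H^s}/\|u_k\|_{H^{s+m}}$ for an appropriate $s\le 0$, producing a contradiction with the assumed boundedness.

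\textbf{Main obstacle.} The delicate point is the coupled control of three scales---the Littlewood--Paley scale $2^k$, the twisted-diagonal aperture $\eps$, and the Sobolev index $s$---so that the quantifier ``for every $\eps>0$, for every $N$'' in the definition of $\tilde{S}^m_{1,1}$ translates correctly into ``for every $N$'' for the high--high symbol $a^{\sharp}$, with constants uniform in the dyadic index. The Cotlar--Stein bookkeeping for $A^{\flat}$ is the other subtle piece, since the individual norms $\|\Op(a_k^{\flat})\|_{H^{s+m}\to H^s}$ are uniform but not summable in $k$; one must exploit the almost-orthogonality that comes from the restriction $|\eta|\leq 2^{k-3}$ inside the $\xi$-dyadic annulus rather than a naive triangle inequality, which is where the gain over the full $\Psi^m_{1,1}$ class really materializes.
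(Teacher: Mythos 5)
The paper does not prove this proposition: it is a citation of H\"ormander \cite{Hormander88} and Bourdaud \cite{Bourdaud1988}, and the discussion that follows the statement only attributes the two halves of the equivalence to those references. Assessed on its own, your proposal has a genuine gap at the step on which everything else rests. You split each dyadic block $a_k$ (supported where $|\xi|\sim 2^k$) into a low--high part $a_k^\flat$ with $x$-Fourier support $|\eta|\le 2^{k-3}$ and a high--high part $a_k^\sharp$ with $|\eta|\ge 2^{k-4}$, and claim that the tilde condition on $a_\eps$ forces $a^\sharp:=\sum_k a_k^\sharp\in S^{m-N}_{1,1}$ for every $N$, so that $\Op(a^\sharp)$ is smoothing. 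This is false: the tilde condition constrains only the thin cone $|\xi+\eta|\le\eps|\xi|$ around the twisted diagonal $\eta\approx -\xi$, not the whole high--high regime $|\eta|\gtrsim|\xi|$. For a counterexample, take $a(x,\xi)=\sum_k e^{2ix\cdot\xi_k}\phi_k(\xi)$ where $|\xi_k|\sim 2^k$ and $\phi_k$ is a bump of size $2^{km}$ near $\xi_k$: then $a\in S^m_{1,1}$, every block is high--high ($|\eta|\approx 2|\xi|$), but $|\xi+\eta|\sim|\xi|$, so $a_\eps\equiv 0$ for small $\eps$ and the tilde condition holds trivially; yet $\Op(a)$ is a non-smoothing operator that is nonetheless bounded on every $H^s$.

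The correct mechanism is a summability gain, not an order gain, and it requires a different split. Among the resonant terms $|\eta|\sim|\xi|\sim 2^k$, those whose output frequency $|\xi+\eta|$ is also $\sim 2^k$ are unconditionally bounded on every $H^s$, for the same reason the low--high piece is: the output block stays at scale $2^k$, so the Littlewood--Paley sum over $k$ is essentially diagonal. The only obstruction for $s\le 0$ comes from the resonant terms whose output drops to a much lower frequency $2^\ell\ll 2^k$, which is exactly the twisted-diagonal region at aperture $\eps\sim 2^{\ell-k}$. There the tilde condition supplies a factor $\eps^N\sim 2^{-(k-\ell)N}$, making the sum over $k\ge\ell$, at each fixed output scale $\ell$, geometric and hence summable for every $s$. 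So the dichotomy you need is not large-versus-small $|\eta|/|\xi|$ but small-versus-moderate $|\xi+\eta|/|\xi|$, with the tilde condition invoked only in the small-output-frequency regime. The converse directions, in particular the wave-packet argument for the Sobolev converse, would likewise have to be rebuilt on this corrected decomposition; as written they are too schematic to evaluate.
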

For this Proposition, Bourdaud \cite{Bourdaud1988} proved that the largest subalgebra of those operators $A\in \mc{L}(L^2)$ contained $\Psi_{1,1}^0(\R^n)$ is the space of those $A\in\Psi_{1,1}^0(\R^n)$ so that $A^*\in \Psi_{1,1}^0(\R^n)$, and H\"ormander \cite{Hormander88} gave the $\tilde{\Psi}_{1,1}(\R^n)$ characterization and the necessary conditions for boundedness on $H^s(\R^n)$ for all $s\in\R$. 

Since for a smooth diffeomorphism $\phi$ equal to ${\rm Id}$ outside a compact set, we have $\phi^*:H^s(\R^n)\to H^s(\R^n)$ for all $s\in\R$, we deduce from Proposition \ref{prop:hormander} that for $a\in \tilde{S}^m_{1,1}(\R^{n})$ compactly supported in $x$, and~$\phi$ a diffeomorphism of~$\R^n$ equal to the identity outside a compact set,
 \[(\phi^{-1})^* a(x,D_x)\phi^*\in \tilde{\Psi}^m_{1,1}(\R^n).\]
This implies that one can define the same class on a compact manifold $M$:
\begin{defi}
We say that $A\in \tilde{\Psi}^m_{1,1}(M)$ if $A\in \Psi^m_{1,1}(M)$ and for each $j$, 
$\tilde{\psi}_jA\psi_j\in \tilde{\Psi}^{m}_{1,1}(\R^n)$, i.e.
its symbol in the charts $U_j$ is the restriction of elements in $\tilde{S}^{m}_{1,1}(\R^n)$ to $U_j$.
\end{defi}
Due to the invariance of $\tilde{\Psi}^{m}_{1,1}(\R^n)$ under diffeomorphism, 
we see that this definition is independent of the choice of charts. 

The wave-front set ${\rm WF}(A)$ of such an operator is then defined as in the classical case as the conic set of points 
$(x_0,\xi_0)\in T^*M\setminus \{0\}$ which do not admit a conic neighborhood in $T^*M$ where 
the symbol $a(x,\xi)$ in charts is an $\mc{O}_{C^N}(\cjg \xi\cjd^{-N})$ for all $N>0$; its properties are identical the those of the classical class $\Psi_{1,0}(M)$.\\

It has been known since Stein that~$\mathrm{Op}(S^m_{1,1}(\R^n))$ maps~$H^{s+m}(\R^n)$ to~$H^s(\R^n)$ for any~$s>0$, but there are counter-examples in the case~$s=0$. However, the class $\tilde{\Psi}_{1,1}(M)$ has all the desired properties. From the boundedness and composition properties on $\R^n$, we directly obtain the following results on $M$: 
\begin{prop}\label{boundedness}
The following properties hold: 
\begin{itemize}
\item \cite[Theorem 1]{Bourdaud1988}, following Stein's proof.  
Let $A\in \Psi^{m}_{1,1}(M)$, then $A: H^{s+m}(M)\to H^{s}(M)$ is bounded for all $s>0$ and 
$A: C^{s+m}(M)\to C^{s}(M)$ if $s,s+m>0$ are non-integer.
\item  \cite[Theorem 3.6]{Hormander88} If~$A\in\tilde\Psi^m_{1,1}(M)$, then~$A$ maps~$H^{s+m}(M)$ to~$H^s(M)$ for any~$s\in\R$.
\item \cite[Theorem 5.2]{Hormander88} If $A\in \Psi^\mu_{1,1}(M)$, $B\in \tilde\Psi^m_{1,1}(M)$ and $C\in \tilde{\Psi}^\mu_{1,1}(M)$ then $AB\in \Psi_{1,1}^{m+\mu}(M)$ and $BC\in \tilde{\Psi}_{1,1}^{m+\mu}(M)$.
\item \cite[Theorem 4.2]{Hormander88} If $A\in \tilde\Psi^m_{1,1}(M)$ then $A^*\in \tilde\Psi^m_{1,1}(M)$.
 \end{itemize}
\end{prop}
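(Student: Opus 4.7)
The plan is to deduce each statement from its Euclidean counterpart via localization in coordinate charts. Fix a finite atlas $(U_j,\kappa_j)$ of $M$ with $U_j\simeq B(0,\epsi)\subset\R^n$, a subordinate partition of unity $(\psi_j)$, and cutoffs $\tilde\psi_j\in C_c^\infty(U_j)$ equal to $1$ on $\supp\psi_j$. Then any $A\in\Psi^m_{1,1}(M)$ decomposes as $A=\sum_{i,j}\tilde\psi_iA\psi_j+R$, where $R$ is smoothing, the off-diagonal terms with $\supp\tilde\psi_i\cap\supp\psi_j=\emptyset$ have smooth Schwartz kernel (by repeated integration by parts in $\xi$), and each on-diagonal piece $\tilde\psi_jA\psi_j$ pushes forward under $\kappa_j$ to a compactly supported operator in $\Psi^m_{1,1}(\R^n)$---or in $\tilde\Psi^m_{1,1}(\R^n)$ when $A\in\tilde\Psi^m_{1,1}(M)$, by definition of the class.

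For items 1 and 2, the $H^s(M)$ and $C^s(M)$ norms are equivalent to $\sum_j\|(\psi_ju)\circ\kappa_j^{-1}\|_{H^s(\R^n)}$ and the analogous expression for $C^s$. Hence Sobolev and H\"older boundedness on $M$ reduces to the same statements on $\R^n$ for each pushforward $\kappa_{j,*}(\tilde\psi_jA\psi_j)$. Item 1 then follows from Stein's estimate as proved by Bourdaud \cite[Theorem 1]{Bourdaud1988}, and item 2 from H\"ormander \cite[Theorem 3.6]{Hormander88}, applied in each chart.

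For items 3 and 4, the essential tool is Proposition \ref{prop:hormander}: in $\R^n$, $A\in\tilde\Psi^m_{1,1}$ iff both $A,A^*\in\Psi^m_{1,1}$. Together with Lemma \ref{changeofcoord} (applied also to the symbol of $A^*$), this shows $\tilde\Psi^m_{1,1}(\R^n)$ is diffeomorphism-invariant, and the characterization transfers to $M$: $A\in\tilde\Psi^m_{1,1}(M)$ iff $A,A^*\in\Psi^m_{1,1}(M)$, with adjoint taken with respect to $\mathrm{dv}_{g_0}$. Item 4 is then immediate since $(A^*)^*=A$. For item 3, localize as above: both $AB$ and $BC$ decompose modulo smoothing into finite sums of compositions of compactly supported operators living in common charts, and \cite[Theorem 5.2]{Hormander88} yields $AB\in\Psi^{m+\mu}_{1,1}(\R^n)$ and $BC\in\tilde\Psi^{m+\mu}_{1,1}(\R^n)$ piecewise. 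Transferring back gives $AB\in\Psi^{m+\mu}_{1,1}(M)$, and for $BC$ we verify the adjoint condition: $(BC)^*=C^*B^*\in\Psi^{m+\mu}_{1,1}(M)$ by item 4 applied to $B,C$ combined with the composition rule $\Psi^\mu_{1,1}\cdot\tilde\Psi^m_{1,1}\subset\Psi^{m+\mu}_{1,1}$, hence $BC\in\tilde\Psi^{m+\mu}_{1,1}(M)$.

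The main subtlety is keeping track of the $\tilde S^m_{1,1}$ condition---the fast decay near the twisted diagonal defining \eqref{aeps}---through chart changes and partition-of-unity operations. Lemma \ref{changeofcoord} alone only controls the symbol modulo $S^{m-1}_{1,1}$, which is not enough to preserve the fine spectral property. The Sobolev-boundedness characterization of Proposition \ref{prop:hormander} circumvents this difficulty by providing a coordinate-free criterion equivalent to membership in $\tilde\Psi^m_{1,1}$, and is what makes the chart-by-chart argument go through cleanly.
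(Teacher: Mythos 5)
Your proof is correct and follows essentially the same route the paper intends: the paper treats Proposition~\ref{boundedness} as an immediate transfer of the cited Euclidean results by chart localization, with the crucial enabling fact being the $H^s$-boundedness characterization of Proposition~\ref{prop:hormander}, which the paper invokes just before the proposition to justify that $\tilde\Psi^m_{1,1}(M)$ is well-defined and coordinate-invariant. Your remark that Lemma~\ref{changeofcoord} alone only controls the symbol modulo $S^{m-1}_{1,1}$ and hence cannot preserve the twisted-diagonal condition under chart changes correctly identifies why that characterization, rather than a symbolic argument, is what makes the transfer to $M$ go through.
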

In fact, operators in~$\tilde\Psi^0_{1,1}(M)$ are Calder\'on-Zygmund and thus act on~$L^p(M)$ for~$1<p<\infty$, Besov, Hardy and BMO spaces; see the book \cite{Taylor_nonlinear_book} for very general results in this direction.\\

Although~$\tilde\Psi_{1,1}(M)$ is an algebra, it is still too big for a calculus to exist, in the sense that the principal symbol is not a homomorphism. 
Thus we need to introduce smaller subspaces of symbols:
\begin{defi}
 For~$r>0$ and~$m\in\R$, the space of $r$-regular symbols~$\prescript{r}{}{S}^m_{1,1}(M)$, is the space of~$a\in S^m_{1,1}(M)$ such that in local charts
 \[\forall\alpha,\beta, \exists C_{\alpha,\beta}>0, \forall (x,\xi)\in T^*M, \quad \la \partial_x^\alpha\partial_\xi^\beta a(x,\xi)\ra\leq C_{\alpha,\beta}\ls\xi\rs^{m+(\la\alpha\ra-r)_+-\la\beta\ra},\]
 with~$f_+=\max(0,f)$. We also define $\prescript{r}{}{\tilde S}^m_{1,1}(M):=\prescript{r}{}{S}^m_{1,1}(M)\cap \tilde{S}^{m}_{1,1}(M)$. 
 \end{defi}
Note that for $r\geq r'\geq 0$, we have $\prescript{r-r'}{}S^{m-r'}_{1,1}(M)
\subset \prescript{r}{}S^{m}_{1,1}(M)$ and that for $a\in \prescript{r}{}{S}^m_{1,1}(M)$, $b\in \prescript{r}{}{S}^{m'}_{1,1}(M)$ one has 
\begin{equation}\label{propertiesclassrPsi}
ab \in \prescript{r}{}{S}^{m+m'}_{1,1}(M), \quad \pl_\xi a\in \prescript{r}{}{S}^{m-1}_{1,1}(M), \quad \pl_x a\in \prescript{r-1}{}{S}^{m}_{1,1}(M). 
\end{equation}
If $S^m(M)=S^{m}_{1,0}(M)$ denotes the standard class of symbols satisfying  
$|\pl_x^\alpha\pl_{\xi}^\beta a(x,\xi)|\leq C_{\alpha\beta}(1+|\xi|)^{m-|\beta|}$, and if ~$0\leq r\leq r'$, we also have
\[S^m(M)=\prescript{\infty}{}{\tilde S}^m_{1,1}(M)\subset\prescript{r'}{}{\tilde S}^m_{1,1}(M)\subset\prescript{r}{}{\tilde S}^m_{1,1}(M)\subset\prescript{0}{}{\tilde S}^m_{1,1}(M)=\tilde S^m_{1,1}(M).\]
The residual symbols are those in~$\prescript{\infty}{}{\tilde S}^{-\infty}_{1,1}(M)=S^{-\infty}(M)$.
We define $\prescript{r}{}{\Psi}_{1,1}^m(M)$ (resp. $\prescript{r}{}{\tilde\Psi}_{1,1}^m(M)\subset\tilde\Psi_{1,1}^m(M)$) to be the operators which can be written as ${\rm Op}(a)+S$ with 
$a\in \prescript{r}{}{ S}^m_{1,1}(M)$ (resp. $a\in \prescript{r}{}{\tilde S}^m_{1,1}(M)$) and $S$ a smoothing operator (with $C^\infty$ Schwartz kernel). As in the proof of Lemma \ref{changeofcoord}, we directly see that being in $\prescript{r}{}{\Psi}_{1,1}^m(M)$ or in $\prescript{r}{}{\tilde\Psi}_{1,1}^m(M)$ is independent of the choice of charts 
and coordinates. 
We also denote by  $\Psi^m(M)$ the set of operators that are quantizations of symbols in $S^m(M)$.
The proof of Lemma \ref{changeofcoord} also shows that the principal symbol $\sigma$ is a well-defined linear map
\[ \sigma: \prescript{r}{}{\Psi}_{1,1}^m(M) \to \prescript{r}{}{S}_{1,1}^m(M)/\prescript{r}{}{S}_{1,1}^{m-1}(M)\]
with $\ker \sigma=\prescript{r}{}{\Psi}_{1,1}^{m-1}(M)$.

For~$r>0$, we have a calculus for $\prescript{r}{}{\tilde\Psi}_{1,1}^m(M)$, modulo operators in~$\tilde\Psi_{1,1}^{m-r}(M)$. More precisely, the formulas for the standard quantization in~$\R^n$ will all be truncated at order~$(m-r)$ as explained now:
\begin{prop}{\cite[Theorem 6.2 and Theorem 6.4]{Hormander88}}\label{expansionsymbol}
For $r>0$, the following properties hold:
 \begin{itemize}
  \item Consider~$a\in S^m_{1,1}(\R^n)$ and~$b\in\prescript{r}{}{\tilde S}^{m'}_{1,1}(\R^n)$ with compact support in $x$. Then we have
  \[a(x,D_x)b(x,D_x)=c(x,D_x),\]
  where the symbol~$c\in S^{m+m'}_{1,1}(\R^n)$ satisfies
  \[c(x,\xi)=\sum_{\la\alpha\ra<\ceil{r}}\frac1{\alpha!}\partial^\alpha_\xi a(x,\xi)D^\alpha_xb(x,\xi)+ S^{m+m'-r}_{1,1}(\R^n).\]
  In particular, if $a\in \prescript{r}{}{S}^m_{1,1}(\R^n)$, then $c\in \prescript{r}{}{S}^{m+m'}_{1,1}(\R^n)$.
  \item For $a\in\prescript{r}{}{\tilde S}^m_{1,1}(\R^n)$ with compact support in $x$, we have
  \[a(x,D_x)^*=b(x,D_x),\]
  where~$b\in\prescript{r}{}{\tilde S}^m_{1,1}(\R^n)$ satisfies
  \[b(x,\xi)=\sum_{\la\alpha\ra<\ceil{r}}\frac1{\alpha!}D^\alpha_x\partial^\alpha_\xi \bar a(x,\xi)+ \tilde{S}^{m-r}_{1,1}(\R^n).\]
 \end{itemize}
\end{prop}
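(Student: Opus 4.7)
My plan is to reduce to local coordinates on $\R^n$ (the compact $x$-support hypothesis removes any issue with convergence of the oscillatory integrals), and then treat both statements by Taylor expanding an appropriate oscillatory integral representation and controlling the remainder using the $r$-regularity hypothesis. For composition I would start from the standard formula
\[ c(x,\xi) = \frac{1}{(2\pi)^n}\iint e^{-iy\cdot \eta}\, a(x,\xi+\eta)\, b(x+y,\xi)\,dy\,d\eta, \]
Taylor expand $a(x,\xi+\eta)$ in the frequency slot at $\eta=0$ up to order $N:=\lceil r\rceil$, and convert each polynomial factor $\eta^\alpha$ into $(-D_y)^\alpha$ via integration by parts. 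The polynomial part is then immediately identified with the claimed sum $\sum_{|\alpha|<N}\frac1{\alpha!}\partial_\xi^\alpha a(x,\xi)\,D_x^\alpha b(x,\xi)$, and the last assertion of the first item (that $c$ stays in the $r$-regular class when $a$ is $r$-regular) is just the Leibniz rule combined with \eqref{propertiesclassrPsi}. For the adjoint statement I would start from $a^*(x,\xi) = (2\pi)^{-n}\iint e^{-iy\cdot\eta}\bar a(x+y,\xi+\eta)\,dy\,d\eta$ and Taylor expand $\bar a$ in both arguments simultaneously; mixed monomials $y^\alpha \eta^\beta$ with $\alpha\neq\beta$ drop out after integration by parts, and the diagonal terms reassemble into $(D_x\partial_\xi)^\alpha\bar a$.

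The analytic heart of the argument is the remainder estimate. The Taylor remainder of order $N=\lceil r\rceil$ produces a factor $\partial_\xi^\alpha a$ with $|\alpha|=N$, supplying a gain of $N$ negative powers of $\langle\xi\rangle$; after the integration by parts in $y$ it also contains $D_x^\alpha b$. The key point is that the $r$-regularity hypothesis gives $|\partial_x^\beta\partial_\xi^\gamma D_x^N b|\lesssim \langle\xi\rangle^{m'+(|\beta|+N-r)_+-|\gamma|}$, so $D_x^N b$ costs only $\langle\xi\rangle^{m'+N-r}$ rather than the disastrous $\langle\xi\rangle^{m'+N}$ one would obtain for a generic $S^{m'}_{1,1}$ symbol. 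The net order of the remainder is therefore $m+m'-r$, and the bookkeeping for its derivatives (using that each further $\partial_x$ costs at most one power of $\langle\xi\rangle$ and each further $\partial_\xi$ gains one) places it in $S_{1,1}^{m+m'-r}(\R^n)$; the same count handles the adjoint formula.

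The second, more delicate point is to show that the composition remains in the $\tilde{}$-class, i.e. $c\in\tilde S^{m+m'}_{1,1}$ and $a^*\in\tilde S^m_{1,1}$. I would invoke Proposition \ref{prop:hormander}, which characterizes $\tilde\Psi^m_{1,1}(\R^n)$ both as the set of elements of $\Psi^m_{1,1}$ that are $H^{s+m}\to H^s$ bounded for every $s\in\R$ and as the set of those whose adjoints lie again in $\Psi^m_{1,1}$. The first characterization is stable under composition, which yields $c\in\tilde S^{m+m'}_{1,1}$; the second yields the adjoint statement essentially tautologically, once one knows $a^*\in\Psi^m_{1,1}$, which in turn follows from the expansion we just derived. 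As an alternative, one may check directly that the $\epsilon$-truncated symbols $c_\epsilon$, $(a^*)_\epsilon$ from the construction \eqref{aeps} satisfy the $\mathcal{O}(\epsilon^N)$ estimates in $S^{m+m'}_{1,1}$ (resp.\ $S^m_{1,1}$), using stability of the twisted-diagonal localization under the composition kernel.

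The hard part, in my view, is the remainder estimate when $r$ is non-integer: Taylor's formula alone controls things up to integer order, but to capture the fractional loss $\lceil r\rceil-r$ hidden in the exponent $(|\alpha|-r)_+$ one needs a dyadic Littlewood–Paley decomposition of $b$ in the $x$-variable, with each piece controlled by the $r$-regularity in the scale set by the spatial frequency. This fractional bookkeeping, together with the verification that reassembling the dyadic pieces produces a symbol in the claimed class $S^{m+m'-r}_{1,1}$, is the technical content of \cite[Theorem 6.2, Theorem 6.4]{Hormander88} and is where the proof ceases to be purely formal.
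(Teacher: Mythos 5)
Your sketch essentially re-derives H\"ormander's theorems from scratch, whereas the paper's proof of Proposition~\ref{expansionsymbol} is a two-line reduction: it applies \cite[Theorems 6.2 and 6.4]{Hormander88} directly, observing that those theorems give an expansion with a truncation parameter $N$ and a remainder of order $m_N$, and that choosing $N=\ceil{r}$ for an $r$-regular symbol makes $m_N=m+m'-r$ (resp.\ $m-r$). The bookkeeping you describe --- trading the gain $\ls\xi\rs^{-N}$ from $\partial_\xi^\alpha a$ against the cost $\ls\xi\rs^{(N-r)_+}$ of $D_x^\alpha b$ --- is exactly what lives inside H\"ormander's proof, and your final paragraph concedes this. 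So the two proofs target the same formula; the paper simply defers the oscillatory-integral and Littlewood--Paley analysis to the reference rather than reproducing it.

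There is, however, a genuine error in your treatment of the composition. You set out to show that $c\in\tilde S^{m+m'}_{1,1}(\R^n)$, but the proposition does not claim this, and it is in general false. The composition statement only asserts $c\in S^{m+m'}_{1,1}(\R^n)$ (upgraded to $\prescript{r}{}{S}^{m+m'}_{1,1}(\R^n)$ when $a$ is $r$-regular), with no tilde. Indeed the hypothesis is $a\in S^m_{1,1}(\R^n)$, \emph{not} $\tilde S^m_{1,1}(\R^n)$, so $a(x,D_x)$ is not bounded $H^{s+m}\to H^s$ for $s\leq 0$; hence $a(x,D_x)b(x,D_x)$ is also not bounded below $s=0$, and the boundedness characterization from Proposition~\ref{prop:hormander} that you invoke is simply not available. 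That part of your argument should be dropped: for the composition, $c\in S^{m+m'}_{1,1}(\R^n)$ already follows from the expansion plus the remainder estimate, and the ``In particular'' clause then follows from \eqref{propertiesclassrPsi} and the inclusion $S^{m+m'-r}(\R^n)\subset\prescript{r}{}{S}^{m+m'}_{1,1}(\R^n)$, as you note. The tilde-class preservation is needed --- and true --- only for the adjoint, where your tautological use of Proposition~\ref{prop:hormander} is correct.
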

\begin{proof} Using the definition of $\prescript{r}{}{\tilde S}^m_{1,1}(\R^n)$, the property for the adjoint follows from \cite[Theorem 6.2]{Hormander88} by taking 
$N=\ceil{r}$ so that $m_N=m-r$ in that Theorem. The property for the composition follows from 
\cite[Theorem 6.4]{Hormander88} by also setting $N=\ceil{r}$. The fact that $c\in \prescript{r}{}{S}^{m+m'}_{1,1}(\R^n)$ uses that $S^{m+m'-r}(\R^n)\subset \prescript{r}{}{S}^{m+m'}_{1,1}(\R^n)$ and the properties \eqref{propertiesclassrPsi}.
\end{proof}
As a direct consequence, on~$M$ we obtain the principal calculus for~$r\geq1$, and the subprincipal one for~$r\geq2$ (recall below that $\sigma(A)$ for $A\in \Psi^m_{1,1}$ 
is always a class modulo $S^{m-1}_{1,1}(M)$).
\begin{prop}\label{composition_mfd}
The following hold true:
 \begin{itemize}
  \item If~$A\in \Psi_{1,1}^m(M)$, resp. $A\in \tilde{\Psi}_{1,1}^m(M)$, and~$B\in\prescript{r}{}{\tilde\Psi}_{1,1}^{m'}(M)$ for $r>0$, then~$AB\in {\Psi}^{m+m'}_{1,1}(M)$, resp. $AB\in {\tilde \Psi}^{m+m'}_{1,1}(M)$, and
  \[\sigma(AB)=\sigma(A)\sigma(B) \, \, {\rm mod }\,  S^{m+m'-r}_{1,1}(M)\]
  If in addition $A\in \prescript{r}{}{\tilde \Psi}_{1,1}^m(M)$, then $AB\in \prescript{r}{}{\tilde \Psi}^{m+m'}_{1,1}(M)$. If moreover $r\geq 1$
  \[\sigma(AB)=\sigma(A)\sigma(B) \, \,{\rm  mod }\, \prescript{r-1}{}{\tilde S}_{1,1}^{m+m'-1}(M).\]
  \item If~$A\in\prescript{r}{}{\tilde\Psi}_{1,1}^m(M)$ and~$B\in\prescript{r}{}{\tilde\Psi}_{1,1}^{m'}(M)$ for $r\in [1,2]$ then~$[A,B]\in\prescript{r-1}{}{\tilde\Psi}_{1,1}^{m+m'-1}(M)$ and
  \[\sigma([A,B])=\frac1i\lB\sigma(A),\sigma(B)\rB \,\, {\rm mod }\,  \tilde{S}^{m+m'-r}_{1,1}(M),\]
  where~$[.,.]$ is the commutator and~$\lB.,.\rB$ the Poisson bracket of functions on~$T^*M$.
    \item If~$A\in\prescript{r}{}{\tilde\Psi}_{1,1}^m(M)$ for $r>0$,
   then for any inner product on~$M$, the adjoint~$A^*\in\prescript{r}{}{\tilde\Psi}_{1,1}^m(M)$ and
   \[\sigma(A^*)=\overline{\sigma(A)} \, \, {\rm mod }\,  \tilde{S}^{m-r}_{1,1}(M).\]
 If $\sigma(A)$ is real and $r\geq 1$, then
 $A^*-A\in \prescript{r-1}{}{\tilde\Psi}_{1,1}^{m-1}(M)$. 
 \end{itemize}
\end{prop}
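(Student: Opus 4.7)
The plan is to reduce all three parts to Proposition~\ref{expansionsymbol} by a local chart calculation, and then assemble on $M$ via a partition of unity, using the fact that the classes $\prescript{r}{}{\tilde{\Psi}}_{1,1}^m$ and $\tilde{\Psi}_{1,1}^m$ are coordinate-invariant. Concretely, fix a finite cover $(U_j)$ and cutoffs $\psi_j,\tilde\psi_j$ as in the definition of the quantization, and write any $A\in\Psi_{1,1}^m(M)$ as $A=\sum_j \tilde\psi_j A\psi_j + S$ modulo a smoothing $S$. When multiplying $AB$ or computing $[A,B]$ or $A^*$, cross-terms of the form $\tilde\psi_j A \psi_j \cdot \tilde\psi_k B\psi_k$ where the supports are disjoint contribute smoothing remainders; the diagonal terms can be pulled back to $\R^n$ and analyzed there.

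For the composition, inside each chart Proposition~\ref{expansionsymbol} gives $\sigma(AB)=\sum_{|\alpha|<\lceil r\rceil}\frac1{\alpha!}\pl_\xi^\alpha a\cdot D_x^\alpha b\,+\,\mc{O}(S^{m+m'-r}_{1,1})$. The $\alpha=0$ term is the claimed product of principal symbols. If in addition $a\in\prescript{r}{}{\tilde S}^m_{1,1}$, each term with $|\alpha|\ge1$ lies in $\prescript{r-|\alpha|}{}{S}^{m+m'-|\alpha|}_{1,1}$ by the Leibniz-type properties~\eqref{propertiesclassrPsi}, and the embedding $\prescript{r-r'}{}{S}^{m-r'}_{1,1}\subset\prescript{r}{}{S}^{m}_{1,1}$ shows $AB\in\prescript{r}{}{\tilde \Psi}_{1,1}^{m+m'}$, with a refined error $\sigma(AB)-ab\in\prescript{r-1}{}{\tilde S}^{m+m'-1}_{1,1}$ when $r\ge1$. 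For the commutator with $r\ge1$, subtracting the analogous expansions for $AB$ and $BA$ cancels the $\alpha=0$ contributions, and the $|\alpha|=1$ contributions combine to $\pl_\xi a\cdot D_x b-\pl_\xi b\cdot D_x a=\frac1i\{a,b\}$ using $D_x=-i\pl_x$. This term is in $\prescript{r-1}{}{\tilde S}^{m+m'-1}_{1,1}$, and the higher-order terms and truncation remainder fall in the stated error class by the same inclusion. For the adjoint, one applies the adjoint part of Proposition~\ref{expansionsymbol} directly: $b(x,\xi)=\bar a(x,\xi)+\frac1i\sum_j\pl_{x_j}\pl_{\xi_j}\bar a+\ldots$ modulo $\tilde S^{m-r}_{1,1}$. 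When $a$ is real the leading symbol of $A^*-A$ vanishes and the next correction places $A^*-A$ in $\prescript{r-1}{}{\tilde\Psi}^{m-1}_{1,1}(M)$.

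The main point requiring care is the coordinate invariance of the $r$-regular subclass: the expansion in the proof of Lemma~\ref{changeofcoord} writes $a_\phi(\phi(x),\eta)$ as a sum of terms $\pl_\xi^\alpha a(x,d\phi(x)^T\eta)\cdot D_z^\alpha e^{i\phi''_x(z)\cdot\eta}|_{z=x}$, and since $\phi''_x$ vanishes to order two at $z=x$, the factor $D_z^\alpha e^{i\phi''_x\cdot\eta}|_{z=x}$ is a polynomial in $\eta$ of degree at most $|\alpha|/2$ with smooth coefficients in $x$. Combined with $\pl_\xi^\alpha a\in\prescript{r}{}{S}^{m-|\alpha|}_{1,1}$, each term lies in $\prescript{r}{}{S}^m_{1,1}$, so the whole class $\prescript{r}{}{\tilde S}^m_{1,1}$ is preserved by diffeomorphisms equal to the identity outside a compact set. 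Once this invariance is in place, the partition-of-unity assembly is routine, each piece being verified in the Euclidean model via Proposition~\ref{expansionsymbol}, and the manifold statements follow verbatim.
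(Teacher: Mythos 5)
Your proposal is correct and follows essentially the same route as the paper: localize via a partition of unity, apply the Euclidean expansions of Proposition~\ref{expansionsymbol} termwise, bookkeep the $r$-regular orders using \eqref{propertiesclassrPsi}, and invoke the diffeomorphism invariance of $\prescript{r}{}{\tilde S}_{1,1}$ deduced from the expansion in Lemma~\ref{changeofcoord}. The paper treats this as a direct consequence of Proposition~\ref{expansionsymbol} without writing out the argument, and your proposal simply supplies those details.
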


A direct consequence of this is the following microlocal property:  if $A\in \prescript{r}{}{\tilde\Psi}_{1,1}^m(M)$, $B,B'\in \Psi^0(M)$, then
\begin{equation}\label{prserveWF}
 \WF(B)\cap \WF(B')=\emptyset \Longrightarrow BAB'\in \tilde\Psi_{1,1}^{m-r}(M).
\end{equation} 

The construction of parametrices for elliptic operators being purely symbolic, it still works for our operators.
The \emph{elliptic set} of an operator~$A\in\tilde\Psi_{1,1}^m(M)$ is defined just as in the classical case:
$(x_0,\xi_0)\in {\rm ell}(A)\subset T^*M$ if there is $C>0$ such that $|\xi|^{-m}|\sigma(A)(x,\xi)|\geq C^{-1}>0$ 
for all $(x,\xi)$ in a conic neighborhood $V$ of $(x_0,\xi_0)$ with $|\xi|>C$.
\begin{prop}\label{paramell}
 If~$(x_0,\xi_0)$ is in the elliptic set of~$A\in\prescript{r}{}{\tilde\Psi}_{1,1}^m(M)$, with~$r>0$, there exists~$B\in \Psi_{1,1}^{-m}(M)$, $P\in \Psi^0(M)$ elliptic near $(x_0,\xi_0)$ and $E\in \Psi^{-r}_{1,1}(M)$, with~$BA=P+E$.
\end{prop}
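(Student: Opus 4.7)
The plan is to mimic the classical elliptic parametrix construction at the symbolic level, exploiting the composition formula in Proposition \ref{composition_mfd} (equivalently, the expansion in Proposition \ref{expansionsymbol}) to control the error modulo $\Psi^{-r}_{1,1}$. Working in a local chart near $x_0$, I would fix nested conic neighborhoods $V' \Subset V$ of $(x_0,\xi_0)$ on which a representative $a \in \prescript{r}{}{S}^{m}_{1,1}$ of $\sigma(A)$ satisfies $|a(x,\xi)| \geq c_0 \langle\xi\rangle^m$ for $|\xi|$ large, and choose $\chi \in C^\infty(T^*M)$, homogeneous of degree $0$ in $\xi$ for $|\xi|$ large, with $\chi \equiv 1$ on $V'$, $\supp(\chi) \subset V$, and $\chi = 0$ for $|\xi|\leq R_0$. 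Then $P := \Op(\chi) \in \Psi^0(M)$ is a classical operator elliptic at $(x_0,\xi_0)$.

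The natural symbolic left-inverse is
\[ b_0(x,\xi) := \chi(x,\xi)/a(x,\xi), \]
extended by zero outside $\supp(\chi)$. Using the lower bound on $|a|$ together with the estimates $|\pl_x^\alpha \pl_\xi^\beta a| \lesssim \langle\xi\rangle^{m+(|\alpha|-r)_+ - |\beta|}$ (and Leibniz/Fa\`a di Bruno applied to $1/a$), one checks that $b_0 \in \prescript{r}{}{S}^{-m}_{1,1}(M) \subset \Psi^{-m}_{1,1}(M)$. Setting $B_0 := \Op(b_0)$ and invoking Proposition \ref{composition_mfd} with $B_0 \in \Psi^{-m}_{1,1}$ and $A \in \prescript{r}{}{\tilde\Psi}^m_{1,1}$, we obtain $B_0 A \in \Psi^0_{1,1}(M)$ and
\[ \sigma(B_0 A) \equiv b_0\, a = \chi = \sigma(P) \mod S^{-r}_{1,1}(M). \]
When $r \leq 1$ we have $\ceil{r}=1$, so the symbolic expansion has only the $\alpha=0$ term and gives at once $B_0 A - P \in \Psi^{-r}_{1,1}(M)$; the proof is complete with $B := B_0$ and $E := B_0 A - P$.

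The case $r > 1$ requires a finite iterative correction. Assuming inductively that $B^{(k)} A = P + R_k$ with $R_k \in \Psi^{-k}_{1,1}(M)$ for some $0 \leq k < \ceil{r}$, I set $b_{-k} := -\chi\, \sigma(R_k)/a$ and $B^{(k+1)} := B^{(k)} + \Op(b_{-k})$: since $b_{-k}\, a \equiv -\sigma(R_k) \mod S^{-k-1}_{1,1}$, the composition formula shows that $R_{k+1} := R_k + \Op(b_{-k}) A \in \Psi^{-k-1}_{1,1}(M)$. After $\ceil{r}$ steps, the error belongs to $\Psi^{-r}_{1,1}(M)$, completing the construction. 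The main technical obstacle is the bookkeeping of regularity classes along the iteration: each $b_{-k}$ involves a loss of one derivative of $a$ as in \eqref{propertiesclassrPsi} (so $\pl_x a \in \prescript{r-1}{}{S}^m_{1,1}$, etc.), and one must verify $b_{-k} \in \prescript{(r-k)_+}{}{S}^{-m-k}_{1,1}$ for Proposition \ref{composition_mfd} to reapply. Once this is done, the final $B$, $P$ and $E$ satisfy all the requirements of the statement.
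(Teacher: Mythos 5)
Your proof follows exactly the paper's strategy: construct $b_0=\chi/\sigma(A)$ with $\chi$ a classical cutoff elliptic at $(x_0,\xi_0)$, quantize to $B_0$, use Proposition \ref{composition_mfd} to see $B_0A-\Op(\chi)$ gains one order, and iterate at most $\ceil{r}$ times until the error lands in $\Psi^{-r}_{1,1}(M)$. The bookkeeping of the $\prescript{r-k}{}{S}$ classes that you flag as the main technical point is indeed the crux, and your treatment of it (along with the explicit observation that $r\le 1$ terminates immediately) matches the paper's level of detail.
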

\begin{proof} The proof is contained in \cite[Theorem 3.4.B]{Taylor_nonlinear_book} and follows directly from Proposition \ref{composition_mfd}. 
We sketch the proof for the convenience of the reader. Let $\chi\in C^\infty(T^*M)$ homogeneous of degree $0$ in $|\xi|>1$, supported in a cone where $a$ is elliptic and equal to $1$ near 
$\{(x_0,\lambda\xi_0)\, |\, \lambda>1\}$. 
 Let $b_0(x,\xi):=\chi(x,\xi)/\sigma(A)(x,\xi)$ for $|\xi|$ large enough 
and extend $b_0$ in $T^*M$ in a smooth fashion; note that $b_0\in {^rS}^{-m}_{1,1}(M)$.  We set $B_0={\rm Op}(b_0)$ and $P={\rm Op}(\chi)\in \Psi^0(M)$, then  by Proposition \ref{composition_mfd}  we get $E_0:=B_0A-P\in \Psi^{-1}_{1,1}(M)\cup \Psi^{-r}_{1,1}(M)$. If $r\geq 1$, we set 
$b_1=\chi \sigma(E_0)/\sigma(A)$ for $|\xi|$ large enough and $B_1={\rm Op}(b_1)$ we obtain 
$E_1=B_1A-E_0\in \Psi^{-2}_{1,1}(M)\cup \Psi^{-r}_{1,1}(M)$. We can continue inductively the parametrix and construct $B_j$ for $j\leq r$ and set $B=\sum_{j=0}^{\ceil{r}}B_j$ until we reach a remainder in $\Psi^{-r}_{1,1}(M)$.
\end{proof}
From this and Proposition \ref{boundedness}, one deduces directly the associated elliptic estimates (see \cite[Theorem 3.4.D]{Taylor_nonlinear_book} for reference):
\begin{corr}\label{estimeeell}
Let $A\in\prescript{r}{}{\tilde\Psi}_{1,1}^m(M)$ with $m\geq 0$ and $(x_0,\xi_0)\in {\rm ell}(A)$. Let $u\in H^{s'}(M)$ for some $s'>0$ and assume that $Au\in H^{s}(M)$ for $s>s'-m$. Then for each $Q\in \Psi^0(M)$ microsupported in a small enough conic neighborhood of $(x_0,\xi_0)$, $Qu\in H^{\min(s+m,s'+r)}(M)$ and there is $C>0$ independent of $u$ so that
\[\|Qu\|_{H^{\min(s+m,s'+r)}(M)}\leq C \|Au\|_{H^{s}(M)}+C\|u\|_{H^{s'}(M)}.\]
\end{corr}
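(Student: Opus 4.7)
The plan is to package $Q$, on its microsupport near $(x_0,\xi_0)$, as the sum of two controlled pieces: one whose output order is $s+m$ and absorbs $Au$, another whose output order is $s'+r$ and absorbs $u$, plus a smoothing remainder. The whole argument is a microlocal rewrite of the parametrix from Proposition \ref{paramell}, combined with Stein's boundedness from Proposition \ref{boundedness}.

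Concretely, I would first invoke Proposition \ref{paramell} to produce $B\in\Psi^{-m}_{1,1}(M)$, a classical $P\in\Psi^0(M)$ elliptic near $(x_0,\xi_0)$, and $E\in\Psi^{-r}_{1,1}(M)$ with $BA=P+E$. Since $P$ is a classical zeroth-order elliptic symbol at $(x_0,\xi_0)$, the usual classical parametrix construction gives $P'\in\Psi^0(M)$ with $P'P=\Id+R_0$ and $\WF(R_0)$ disjoint from some conic neighborhood $U$ of $(x_0,\xi_0)$. If $Q$ is microsupported in a sufficiently small conic neighborhood of $(x_0,\xi_0)$ contained in $U$, then $QR_0$ is smoothing, so $Q=QP'P-R$ with $R$ smoothing. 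Substituting $P=BA-E$ yields
\begin{equation*}
 Qu=(QP'B)(Au)-(QP'E)u-Ru.
\end{equation*}

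For the three pieces I would invoke the first item of Proposition \ref{boundedness}: applied to $Au\in H^s$, the operator $B\in\Psi^{-m}_{1,1}(M)$ produces an element of $H^{s+m}(M)$ as soon as $s+m>0$, a condition guaranteed by the hypotheses $s>s'-m$ and $s'>0$; then $QP'\in\Psi^0(M)$ is classical and preserves that Sobolev space, giving $\|(QP'B)(Au)\|_{H^{s+m}}\lesssim \|Au\|_{H^s}$. Similarly, $E\in\Psi^{-r}_{1,1}(M)$ maps $u\in H^{s'}$ into $H^{s'+r}(M)$ (the bound $s'+r>0$ is automatic), and $QP'$ preserves that space, yielding $\|(QP'E)u\|_{H^{s'+r}}\lesssim\|u\|_{H^{s'}}$. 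Finally $R$ is smoothing, so $\|Ru\|_{H^N}\lesssim\|u\|_{H^{s'}}$ for any $N$. Reading these bounds in the $H^{\min(s+m,s'+r)}$ norm and summing produces the stated estimate.

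The only delicate point is making the adjective \emph{sufficiently small} precise: the conic neighborhood of $(x_0,\xi_0)$ containing the microsupport of $Q$ must lie inside the classical elliptic set of the $P$ furnished by Proposition \ref{paramell}, so that the microlocal inversion of $P$ via $P'$ costs only a smoothing error. Notably, my argument deliberately applies $B$ and $E$ directly to the \emph{functions} $Au$ and $u$ rather than first composing $QP'B$ or $QP'E$ at the operator level; this sidesteps the awkwardness that $B$ is known only in $\Psi^{-m}_{1,1}$ (not in the $\tilde\Psi$ subalgebra), because the \emph{positive} Sobolev orders $s+m$ and $s'+r$ on the target activate Stein's bound, while the classical $QP'$ then acts as a harmless postprocessor on all Sobolev spaces.
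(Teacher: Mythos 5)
Your argument is correct and matches the paper's own proof: both invoke Proposition \ref{paramell} to write $BA=P+E$, apply $B$ to $Au\in H^s$ and $E$ to $u\in H^{s'}$ via Stein's bound (the hypotheses $m\geq 0$, $s'>0$, $s>s'-m$ indeed give $s+m>0$), and then microlocally invert the classical elliptic $P$ near $(x_0,\xi_0)$. You simply spell out the classical-parametrix step that the paper leaves implicit, and your remark that $B$ and $E$ should be applied to the functions $Au$ and $u$ rather than composed with $QP'$ at the operator level is exactly the reason the paper's proof works without needing $B,E$ to lie in the $\tilde\Psi$ subalgebra.
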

\begin{proof}
By Proposition \ref{paramell}, there exists~$B\in \Psi_{1,1}^{-m}(M)$, $P\in \Psi^0(M)$ elliptic near $(x_0,\xi_0)$ and $E\in \Psi^{-r}_{1,1}(M)$ such that~$BA=P+E$. Then 
$Pu=BAu-Eu$ with $BAu\in H^{s+m}(M)$ and $Eu\in H^{s'+r}(M)$ by Proposition \ref{boundedness} (note that $s+m>0$). Using standard pseudo-differential calculus in $\Psi(M)$ and ellipticity of $P$ near $(x_0,\xi_0)$, 
we then obtain the desired result. 
\end{proof}

Next, we state the sharp G\aa rding inequality in that setting: 
\begin{prop}{\cite[Theor\`eme 6.8]{Bony1981},\cite[Theorem 7.1]{Hormander88}}\label{Garding}
Let $A\in\prescript{r}{}{\tilde\Psi}_{1,1}^m(M)$ with~$r\in (0,2]$ and assume that $\Re(\sigma(A))(x,\xi)\geq 0$ for all $x\in M$ and~$\xi$ large enough, then there is $C>0$ such that for any~$u\in C^\infty(M)$,
 \[\Re\ls Au,u\rs\geq-C\lA u\rA^2_{H^{m/2-r/4}(M)}.\]
\end{prop}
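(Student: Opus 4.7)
The plan is to adapt the classical Friedrichs symmetrization argument for sharp G\aa rding in $\Psi^m_{1,0}$, with the loss of $1/2$ in Sobolev index (i.e.\ $1$ in symbol order) replaced by $r/4$ (i.e.\ $r/2$ in symbol order). The restriction $r\leq 2$ will appear naturally at the stage where one compares the symmetrized operator with $A$ via the composition/adjoint formulas of Proposition~\ref{expansionsymbol}, whose subprincipal expansion is truncated at $\ceil{r}\leq 2$.

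First I would localize. Using a smooth partition of unity $(\chi_j)$ subordinate to a finite atlas on $M$, write $\Re\langle Au,u\rangle=\sum_j\Re\langle A(\chi_j u),\chi_j u\rangle+\langle Eu,u\rangle$, where $E$ gathers the various commutator terms $[A,\chi_j]\chi_j$ and their adjoints. By Proposition~\ref{composition_mfd}, $E\in\tilde\Psi^{m-1}_{1,1}(M)$ if $r\geq 1$ and $E\in\tilde\Psi^{m-r}_{1,1}(M)$ if $r<1$. Since $\max(m-1,m-r)\leq m-r/2$ for $r\in(0,2]$, the contribution of $E$ is controlled by $C\|u\|^2_{H^{m/2-r/4}(M)}$ via Proposition~\ref{boundedness}. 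This reduces the problem to $A\in\prescript{r}{}{\tilde\Psi}^m_{1,1}(\R^n)$ with symbol compactly supported in $x$. Next, replacing $A$ by $\widetilde A=(A+A^*)/2$ (which by Proposition~\ref{expansionsymbol} also lies in $\prescript{r}{}{\tilde\Psi}^m_{1,1}(\R^n)$), one has $\Re\langle Au,u\rangle=\langle\widetilde A u,u\rangle$, and the symbol of $\widetilde A$ is $\Re a$ modulo an error of order $\leq m-r/2$. Hence we may assume $\sigma(A)=a$ is real and $a(x,\xi)\geq 0$ for $|\xi|\geq C$.

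The core of the proof is the construction of a non-negative operator with the same principal symbol as $A$ modulo an error of order $m-r/2$. Two routes are available. The square-root route defines
\[
b(x,\xi) = \sqrt{\, a(x,\xi)+\varepsilon\langle\xi\rangle^m\kappa(\xi/R)\,},
\]
with $\kappa\in C^\infty_c(\R^n)$ equal to $1$ on the ball of radius $C$; one shows $b\in\prescript{r}{}{\tilde S}^{m/2}_{1,1}(\R^n)$ using a Glaeser-type inequality, which for a non-negative $C^2$ function $f$ bounds $|\partial f|\leq\sqrt{2f\,\|\partial^2 f\|_\infty}$ so that $|\partial\sqrt{f}|$ remains controlled near the zero set; higher derivatives are then handled using the $r$-regular structure. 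Alternatively, the Friedrichs route sets
\[
A^F u(x) = \int_{\R^{2n}} a(y,\eta)\,\phi_{y,\eta}(x)\,\overline{\langle\phi_{y,\eta},u\rangle}\,\frac{dy\,d\eta}{(2\pi)^n},\qquad \phi_{y,\eta}(x)=c_n\langle\eta\rangle^{n/4}e^{i(x-y)\cdot\eta-\langle\eta\rangle(x-y)^2/2},
\]
for which the positivity $\langle A^F u,u\rangle=\int a(y,\eta)|\langle\phi_{y,\eta},u\rangle|^2\,dy\,d\eta\geq 0$ is manifest once $a\geq 0$.

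The main obstacle, and the heart of the matter, is showing that $A-A^F\in\tilde\Psi^{m-r/2}_{1,1}(\R^n)$ (and similarly $A-B^*B$ for the square-root approach, using Proposition~\ref{expansionsymbol}). For the Friedrichs symmetrizer, this amounts to identifying $\sigma(A^F)$ as a Gaussian convolution of $a$ at the scale $\langle\xi\rangle^{-1/2}$ in both the $y$ and $\eta$ variables, then Taylor-expanding $a$ to second order around $(x,\xi)$: the zeroth-order term yields $a(x,\xi)$, the first-order term vanishes by parity of the Gaussian, and the second-order term is of order $\langle\xi\rangle^{-1}\cdot\langle\xi\rangle^m\cdot\langle\xi\rangle=O(\langle\xi\rangle^{m})$ in the worst case, but after the $r$-regular bookkeeping of Proposition~\ref{expansionsymbol} and the restriction $r\leq 2$, the remainder lands exactly in $\tilde S^{m-r/2}_{1,1}(\R^n)$. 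Once this is established, the conclusion follows:
\[
\Re\langle Au,u\rangle = \langle A^F u,u\rangle + \langle (A-A^F)u,u\rangle \geq -C\|u\|^2_{H^{m/2-r/4}(\R^n)},
\]
and patching back via the partition of unity completes the proof on $M$.
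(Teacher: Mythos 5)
The overall architecture of your proposal — localization by a partition of unity, reduction to a self-adjoint real non-negative symbol via $\tilde A = (A+A^*)/2$ using Proposition~\ref{expansionsymbol}, then comparison with a manifestly non-negative operator — is the right one, and the paper itself gives no proof (it cites Bony and H\"ormander). But the key estimate at the heart of your argument is not correct as written, and the gap is exactly where you say ``after the $r$-regular bookkeeping\dots the remainder lands exactly in $\tilde S^{m-r/2}_{1,1}(\R^n)$''.

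Your Friedrichs symmetrizer uses the classical coherent state $\phi_{y,\eta}(x)=c_n\langle\eta\rangle^{n/4}e^{i(x-y)\cdot\eta-\langle\eta\rangle(x-y)^2/2}$, whose width in $x$ is $\langle\eta\rangle^{-1/2}$. With this choice, the second-order Taylor term from $\partial_x^2 a$ contributes $w_x^2\,|\partial_x^2 a|\lesssim \langle\xi\rangle^{-1}\cdot\langle\xi\rangle^{m+(2-r)_+}=\langle\xi\rangle^{m+1-r}$, and one checks that $m+1-r> m-r/2$ for every $r<2$; for $r\leq 1$ the error does not even decay relative to $\langle\xi\rangle^{m}$. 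So the remainder does \emph{not} land in $\tilde S^{m-r/2}_{1,1}$ for $r\in(0,2)$, and the proof as written only yields the classical gain $1/2$ at $r=2$ and nothing below $r=1$. The correct construction uses an anisotropic coherent state localized in $x$ at scale $\langle\eta\rangle^{-1+r/4}$ (hence in $\xi$ at scale $\langle\eta\rangle^{1-r/4}$): then the two dominant second-order contributions balance, $w_x^2\langle\xi\rangle^{m+2-r}$ and $w_\xi^2\langle\xi\rangle^{m-2}$ both becoming $\langle\xi\rangle^{m-r/2}$, and the mixed term $w_xw_\xi\,\partial_x\partial_\xi a$ stays below as well for $r\leq 2$. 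This scale adaptation is the whole reason the gain depends on $r$ and why the statement caps at $r=2$; it is missing from your write-up.

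A smaller but real issue affects your alternative square-root route: Glaeser's inequality gives $|\partial_x\sqrt{a}|\lesssim\sqrt{\|\partial_x^2 a\|_\infty}\lesssim\langle\xi\rangle^{(m+(2-r)_+)/2}$, which for $r<2$ places $b=\sqrt{a+\varepsilon\langle\xi\rangle^m\kappa}$ in $\prescript{r/2}{}{\tilde S}^{m/2}_{1,1}$, not in $\prescript{r}{}{\tilde S}^{m/2}_{1,1}$ as you assert. That weaker class is in fact exactly what is needed, since Proposition~\ref{expansionsymbol} then truncates the composition $B^*B$ at order $m-r/2$. But controlling $\partial_x^\alpha b$ for $|\alpha|\geq 2$ near the zero set of $a$ is not a routine extension of Glaeser — the square root of a non-negative function is in general only $C^{1,1}$ — and ``higher derivatives are then handled using the $r$-regular structure'' glosses over the genuinely delicate part of that route. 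Either way, a careful quantitative argument is missing exactly at the step where the $r/4$ gain is supposed to appear.
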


\textbf{Extension to operators acting on vector bundles.} As for the usual pseudo-differential operators in the class $\Psi^m(M)$, the theory extends in the obvious way on a smooth vector bundle $E\to M$ equipped with a Hermitian scalar product $\cjg \cdot,\cdot\cjd_E$. The main difference is that the symbols are with values in the endomorphism bundle ${\rm End}(E)=E\otimes E^*\to M$. The only change is the fact that in Proposition \ref{composition_mfd}, one has $[A,B]\in \prescript{r-1}{}{\tilde\Psi}_{1,1}^{m+m'-1}(M)$ only if $[\sigma(A),\sigma(B)]=0$ as elements of ${\rm End}(E)$: this is the case for example if $A$ has principal symbol $a(x,\xi)\otimes {\rm Id}$ for some function $a\in \prescript{r}{}{\tilde S}^{m}_{1,1}(M)$. 
The definition of elliptic set has also to be replaced by $(x_0,\xi_0)\in {\rm ell}(A)$ if and only there is conic neighborhood $V$ of $(x_0,\xi_0)$ such that $\sigma(A)(x,\xi)$ is invertible in ${\rm End}(E)$ for $|\xi|$ large enough. 
For the sharp G\aa rding inequality (Proposition \ref{Garding}), the condition $\Re(\sigma(A))(x,\xi)\geq 0$ has to be understood as $\sigma(A)(x,\xi)+\sigma(A)^*(x,\xi)\geq 0$ in the sense of symmetric endomorphisms on $E$ for the scalar product $\cjg\cdot,\cdot\cjd_E$; indeed, the proof of \cite{Hormander88}, strongly based on the proof of the classical case \cite[Theorem 18.1.14]{HormanderVol3} applies equally for quantization of symbols with values in Banach spaces, as mentioned in \cite[Remark 2, Page 79]{HormanderVol3}.

\subsection{Paradifferential Calculus}
The paradifferential calculus is a way of regularising operators with rough coefficients, which turns out to be well suited to study non linear expressions. Since we shall only need this case, we will consider the case of differential operators.

Following \cite[Section 1.3]{Taylor_nonlinear_book}, let us first introduce the class of symbols with rough coefficients: we say that $a\in C^{r}S^{m}_{1,\delta}(M)$ with $r\geq 0$ and $\delta\in [0,1]$ if $\pl_\xi^\beta a\in C^r(T^*M)$ for all $\beta$ with the following bounds in local charts
\[\forall \beta,\exists C_\beta>0,\quad \|\pl_\xi^\beta a(\cdot,\xi)\|_{C^r}\leq C_\beta (1+|\xi|)^{m-|\beta|+r|\delta|}\]
with the usual convention that when $r\in \N$, the $C^r$ norm involves the $C^0$ norm of the $j\leq r$ derivatives. We shall also denote $C^rS^m(M):=C^rS^m_{1,0}(M)$.

\begin{defi}
For an order~$m\in \N$, and an index~$r\geq m$, $\prescript{r}{}{\rm Diff}^m(M)$ denotes the set of differential operators $P$ of order $m$ on $M$ which in local charts can be written under the form
\[ P=\sum_{|\alpha|\leq m}p_{\alpha}(x)\pl_{x}^\alpha\]
with $p_\alpha\in C^{r-(m-|\alpha|)}(M)$. They can also be written under the form ${\rm Op}(p)$
where  $p\in C^{r-m}(T^*M)$ is polynomial of order $m$ in the fibers, and given in the charts by 
$p(x,\xi)=\sum_{|\alpha|\leq m}p_\alpha(x)(i\xi)^\alpha$. Note that $p$ belongs to $\sum_{k=0}^m C^{r-k}S^{m-k}(M)$.
\end{defi}
This class contains the set of differential operators of order $m$ with $C^r$ coefficients.
We also remark that if $P\in \prescript{r}{}{\rm Diff}^m(M)$, then $P^*\in \prescript{r}{}{\rm Diff}^m(M)$ so that this class is stable by taking adjoint, contrary to the space of differential operators with $C^r$ coefficients.

\begin{lemm}\label{regularization}
 There is a continuous linear map sending each differential operator $P={\rm Op}(p)\in \prescript{r}{}{\rm Diff}^m(M)$ to a symbol 
$p^\sharp\in \prescript{r}{}{\tilde S}^m_{1,1}(M)$, in such a way that 
$p^\flat:=p-p^\sharp \in C^{r-m}S_{1,1}^{m-r}(M)$ and $p^\flat\in \sum_{k=0}^m\cap_{s\in (0,r-k)}C^{r-k-s}S^{m-k-s}(M)$. Moreover, if $m\in \{0,1\}$,
\begin{equation}\label{boundroughcoef}
\forall s\in (0,r), \quad {\rm Op}(p^\flat): H^{s+m-r}(M)\to H^s(M)
\end{equation}
is bounded.
\end{lemm}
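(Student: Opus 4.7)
The plan is to regularize $p$ in the $x$-variable at scale tied to $|\xi|$, via a paradifferential cutoff. Working in local charts (and gluing by a smooth partition of unity, up to smoothing errors), write $p(x,\xi) = \sum_{|\alpha|\leq m} p_\alpha(x)(i\xi)^\alpha$ with $p_\alpha \in C^{r-(m-|\alpha|)}$. Using the cutoff $\chi(\eta,\xi)$ from \eqref{aeps}, set
\[ p_\alpha^\sharp(x,\xi) := \mathcal{F}^{-1}_{\eta \to x}\bigl(\chi(\eta,\xi)\widehat{p_\alpha}(\eta)\bigr), \qquad p^\sharp := \sum_{|\alpha|\leq m} p_\alpha^\sharp (i\xi)^\alpha, \]
and $p^\flat := p - p^\sharp$. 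This is linear and continuous in $P$ and, up to smoothing operators, independent of the chart system by standard patching.

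To check $p^\sharp \in \prescript{r}{}{\tilde S}^m_{1,1}(M)$, the $\tilde S^m_{1,1}$ condition \eqref{aeps} is immediate because $\chi$ avoids the twisted diagonal $\eta+\xi=0$ for $|\xi|$ large. For the $\prescript{r}{}{S}$ estimates, Bernstein's inequality applied to $p_\alpha^\sharp(\cdot,\xi)$, whose $x$-Fourier support lies in $|\eta|\leq |\xi|$, yields $\|\partial_x^\beta p_\alpha^\sharp(\cdot,\xi)\|_{L^\infty} \lesssim (1+|\xi|)^{(|\beta|-(r-(m-|\alpha|)))_+}$. Multiplication by $(i\xi)^\alpha$ places this piece in $\prescript{r-(m-|\alpha|)}{}{S}^{|\alpha|}_{1,1}(M) \subset \prescript{r}{}{S}^m_{1,1}(M)$ via the inclusion noted after \eqref{propertiesclassrPsi}. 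Derivatives in $\xi$ transfer cleanly through the cutoff.

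For $p^\flat$, the piece $p_\alpha^\flat = p_\alpha - p_\alpha^\sharp$ is $x$-frequency localized to $|\eta|\gtrsim |\xi|$, and Hölder regularity of $p_\alpha$ gives $\|p_\alpha^\flat(\cdot,\xi)\|_{L^\infty} \lesssim (1+|\xi|)^{-(r-(m-|\alpha|))}$ while $\|p_\alpha^\flat(\cdot,\xi)\|_{C^{r-(m-|\alpha|)}}$ remains uniformly bounded. Hölder interpolation then gives $\|p_\alpha^\flat(\cdot,\xi)\|_{C^{r-(m-|\alpha|)-s}} \lesssim (1+|\xi|)^{-s}$ for any $s\in(0,r-(m-|\alpha|))$. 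Combined with the $(i\xi)^\alpha$ factor and summed, this produces the membership $p^\flat \in \bigoplus_{k=0}^m \bigcap_{s\in(0,r-k)} C^{r-k-s}S^{m-k-s}(M)$, and the $C^{r-m}S^{m-r}_{1,1}(M)$ statement follows from the same interpolation together with the Bernstein bounds trading $x$-derivatives for $|\xi|$-factors.

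For the final boundedness with $m\in\{0,1\}$, decompose $\mathrm{Op}(p^\flat) = \sum_{|\alpha|\leq m} \mathrm{Op}(p_\alpha^\flat)(iD_x)^\alpha$. Modulo smoothing, $\mathrm{Op}(p_\alpha^\flat)$ is the classical Bony remainder $p_\alpha\cdot - T_{p_\alpha}$, with $T_{p_\alpha}$ the paraproduct by $p_\alpha$. Bony's remainder estimate \cite{Bony1981, Meyer81, Taylor_nonlinear_book} gives boundedness $p_\alpha\cdot - T_{p_\alpha} : H^t \to H^{t+r-(m-|\alpha|)}$ whenever $t + r - (m-|\alpha|) > 0$. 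Applied with $t = s + m - r - |\alpha|$, for $s \in (0,r)$, and composed with the $|\alpha|$-derivative loss of $(iD_x)^\alpha$, this yields the claimed $H^{s+m-r}\to H^s$ mapping. The main obstacle is the endpoint behavior of Bony's remainder estimate: the restriction $m\in\{0,1\}$ is exactly what guarantees that all polynomial pieces $(|\alpha|\leq m)$ remain inside the regularity window where the paraproduct remainder gains its full $r-(m-|\alpha|)$ derivatives; for larger $m$, lower-order coefficients would be too rough relative to the target, requiring genuine extra work.
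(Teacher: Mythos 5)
Your construction of $p^\sharp$ coefficient-by-coefficient via a $|\xi|$-adapted Littlewood--Paley cutoff, together with the symbol-class bookkeeping via Bernstein and H\"older interpolation, is exactly the route the paper takes (the paper cites Taylor's book for the symbol-class memberships rather than re-deriving them, but the content is the same), so the first three paragraphs match the paper's proof in spirit and substance.

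The final boundedness paragraph, however, contains a gap. You assert that $p_\alpha\cdot - T_{p_\alpha} \colon H^t \to H^{t+r'}$ with $r' = r-(m-|\alpha|)$ holds \emph{whenever} $t+r' > 0$, and apply it with $t = s+m-r-|\alpha|$. Decomposing $p_\alpha u - T_{p_\alpha}u = T_u p_\alpha + R(p_\alpha,u)$, the remainder $R(p_\alpha,\cdot)$ indeed gains the full $r'$ derivatives on $H^t$ for $t > -r'$ (since $R_{p_\alpha}\in\Psi^{-r'}_{1,1}$), but the reversed paraproduct $T_\cdot p_\alpha$ only gains $r'$ for $t<0$. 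For $t\geq 0$ one has $\|S_{j-1}u\|_{L^2}\lesssim\|u\|_{H^t}$ with no extra decay in $j$, so $T_u p_\alpha$ lands only in $H^{r'-\varepsilon}$, not $H^{t+r'}$. Concretely, for $m=1$ and the $|\alpha|=0$ coefficient $p_0\in C^{r-1}$, your $t = s+1-r$ is non-negative as soon as $s\geq r-1$, and the claimed $H^{s+1-r}\to H^s$ boundedness of $\mathrm{Op}(p_0^\flat)$ fails on $s\in[r-1,r)$. Your closing sentence suggesting that $m\in\{0,1\}$ by itself keeps every piece inside the ``regularity window'' is therefore incorrect: it is the condition $s < r - (m-|\alpha|)$, piece by piece, that does so, and the zeroth-order piece of a genuinely first-order operator violates it near the upper end of the $s$-range. (The paper's own applications use $-iX$ with vanishing zeroth-order coefficient, so only the $|\alpha|=m$ piece actually occurs; you should either restrict to $s\in(0,r-m)$, or note that the problematic coefficient is absent in the cases of use.)
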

\begin{proof}
The proof is done in \cite[Section 2]{Bony1981}, \cite[Section 3.2]{Taylor_nonlinear_book} or \cite[Chapter X]{Hormander-nonlinear} in $\R^n$, we just have to proceed similarly using local charts, as is done in \cite[Section 4.1]{Cekic-Guillarmou}\footnote{In this paper, the regularization is with respect to a semi-classical parameter rather than $|\xi|$}. 
We fix a covering with small charts $U_j\simeq B(0,\eps)\subset \R^n$ 
and an associated partition of unity $(\psi_j)_j$. In each chart $U_j$, a function $a_j\in C_c^r(U_j)$
can be regularized as in \cite[Section 3.2]{Taylor_nonlinear_book}: let $\chi \in C^\infty(\R^n\times\R^n)$ so that $\chi(\eta,\xi)=0$ for $|\eta|>|\xi|/2$ and $\chi(\eta,\xi)=1$ if 
$|\eta|<|\xi|/16$, and set for $x\in U_j, \xi\in \R^n$  
\[ a_j^\sharp(x,\xi)=\mc{F}_{\eta\to x}^{-1}(\hat{a}_j(\eta)\chi(\eta,\xi)).\]
Then we define $a^\sharp=\sum_j\psi_ja_j^\sharp$ where $a_j:=a|_{U_j}\times \tilde{\psi}_j$ and $\tilde{\psi}_j\in C_c^\infty(U_j)$ is equal to $1$ on ${\rm supp}(\psi_j)$. For a differential operator $p$, we proceed similarly by writting $P={\rm Op}(p)$ and $p(x,\xi)|_{U_j} \tilde{\psi}_j(x)=\sum_{|\alpha|\leq m}a_{j,\alpha}(x)(i\xi)^\alpha$ and setting
\begin{equation}\label{formulapsharp} 
p^\sharp(x,\xi)=\sum_{j}\sum_{|\alpha|\leq m}\psi_j(x)a^\sharp_{j,\alpha}(x,\xi)(i\xi)^\alpha.
\end{equation}
According to \cite[Proposition 3.2.1, Proposition 1.3.B]{Taylor_nonlinear_book}, one has, setting $k=m-|\alpha|$
\[a_{j,\alpha}^\sharp\in \prescript{r-k}{}{\tilde S}_{1,1}^0(U_j), \quad \psi_j (a_{j,\alpha}-a_{j,\alpha}^\sharp) \in C^{r-k}S_{1,1}^{-r+k}(U_j)\cap C^{r-k-s}S^{-s}(U_j) \]
for $s\in (0,r-k)$. 
Thus $p^\sharp \in \prescript{r}{}{\tilde S}_{1,1}^m(M)$ and $p^\flat=p-p^\sharp$ has the announced regularity. For \eqref{boundroughcoef}, we refer to \cite[Theorem 2.1.A]{Taylor_nonlinear_book}.
\end{proof}
\textbf{Remark.} The regularization procedure described above is of course depending on the choice of cutoff $\chi$ but two regularizations of a symbol $a\in C^rS^m(M)$ yield the same symbol $a^\sharp\in \prescript{r}{}{\tilde S}_{1,1}^m(M)$ modulo the class $\tilde{S}_{1,1}^{m-r}(M)$: this is proved in \cite[Theor\`eme 2.1]{Bony1981} or in \cite[Proposition 10.2.2.]{Hormander-nonlinear}.
Another equivalent way of regularizing can be done using Littlewood-Paley decomposition with the same exact properties, see again \cite[Theorem 2.1]{Bony1981} or \cite[Section 3.2]{Taylor_nonlinear_book}. We can thus use freely both constructions for the paradifferential operator, keeping in mind that some proof are sometime more transparent with one definition than the other.

\begin{defi}
For a differential operator $P={\rm Op}(p)\in \prescript{r}{}{\rm Diff}^m(M)$, we define its associated paradifferential operator to be 
\[ T_p:= {\rm Op}(p^\sharp)\in \prescript{r}{}{\tilde \Psi}^m_{1,1}(M).\]
\end{defi}
Although we shall not use it, one can more generally define a paradifferential operator associated to each symbol $a\in C^rS^m(M)$ of order $m$, see \cite[Chapter X]{Hormander-nonlinear} or \cite{Taylor_nonlinear_book}.

By definition, for $P={\rm Op}(p)\in \prescript{r}{}{\rm Diff}^m(M)$,
\begin{equation}\label{symbolpara}
\sigma(T_p)(x,\xi)=p^\sharp(x,\xi) \,\, {\rm mod}\,\,  \prescript{r}{}{\tilde S}_{1,1}^{m-1}(M)
\end{equation}
and, if $r\geq 1$, viewing this principal symbol as an element in $C^{r}S_{1,1}^m(M)/C^{r-1}S_{1,1}^{m-1}(M)$, we recover by Lemma \ref{regularization} the principal symbol of $P$ (which has rough regularity)
\[ \sigma(T_p)=\sigma(P) \textrm{ mod } C^{r-1}S_{1,1}^{m-1}(M).\] 
This is particularly important for the propagation estimates as the Hamilton flow of $\sigma(T_p)$ 
for $|\xi|$ very large becomes asymptotically the Hamilton flow of $\sigma(P)$.

We then get a calculus, which is traditionally written as follows.
\begin{prop}{\cite[Theor\`emes 3.2 and 3.3]{Bony1981}, \cite[Theorem 10.2.4 and 10.2.5]{Hormander-nonlinear}}\label{composePara}
 Let~$P={\rm Op}(p)\in \prescript{r}{}{\rm Diff}^m(M)$ and~$Q={\rm Op}(q)\in  \prescript{r}{}{\rm Diff}^l(M)$ with $r>\max(m,l)$. Then
 \begin{itemize}
  \item $T_{pq}-T_pT_q\in \prescript{r-1}{}{\tilde \Psi}^{m+l-1}_{1,1}(M)$ if~$r\geq 1$;
  \item $T_p^*-T_{\bar p}\in \prescript{r-1}{}{\tilde \Psi}_{1,1}^{m-1}(M)$ if~$r\geq 1$.
 \end{itemize}
\end{prop}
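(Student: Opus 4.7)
The plan is to reduce both assertions to applications of the $\prescript{r}{}{\tilde \Psi}_{1,1}$ calculus from Proposition~\ref{composition_mfd}, combined with a direct symbolic analysis of the regularization $p\mapsto p^\sharp$ supplied by Lemma~\ref{regularization}.

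I would start with the adjoint, which is almost immediate. Since $p^\sharp\in \prescript{r}{}{\tilde S}_{1,1}^m(M)$ and $r\geq 1$, the third bullet of Proposition~\ref{composition_mfd} gives $T_p^*-\mathrm{Op}(\overline{p^\sharp})\in \tilde\Psi_{1,1}^{m-r}(M)$, and one easily checks from the definitions that $\tilde S_{1,1}^{m-r}\subset \prescript{r-1}{}{\tilde S}_{1,1}^{m-1}$ for $r\geq 1$. It therefore suffices to verify that the regularization commutes with complex conjugation of the (scalar, rough) coefficients: writing $p=\sum_{|\alpha|\leq m}p_\alpha(x)(i\xi)^\alpha$ in a chart and inspecting formula~\eqref{formulapsharp}, the reality of the cutoff $\chi$ yields $\overline{p_\alpha^\sharp}=\overline{p_\alpha}^{\,\sharp}$, hence $\overline{p^\sharp}=\bar p^{\,\sharp}$ and $T_{\bar p}=\mathrm{Op}(\overline{p^\sharp})$, proving the second bullet.

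For the composition, I would first apply the first bullet of Proposition~\ref{composition_mfd} to $p^\sharp,q^\sharp$ (which lie in $\prescript{r}{}{\tilde S}_{1,1}^{m}$ and $\prescript{r}{}{\tilde S}_{1,1}^{l}$ with $r\geq 1$) to obtain $T_pT_q-\mathrm{Op}(p^\sharp q^\sharp)\in \prescript{r-1}{}{\tilde\Psi}_{1,1}^{m+l-1}(M)$. It remains to establish $\mathrm{Op}\bigl((pq)^\sharp - p^\sharp q^\sharp\bigr)\in \prescript{r-1}{}{\tilde\Psi}_{1,1}^{m+l-1}$. In a chart, writing $p=\sum p_\alpha(i\xi)^\alpha$ and $q=\sum q_\beta(i\xi)^\beta$ and using the linearity of $\sharp$ at the level of coefficients, this reduces to a scalar statement: for each pair $(a,b)=(p_\alpha,q_\beta)$ with $a\in C^{r-(m-|\alpha|)}$ and $b\in C^{r-(l-|\beta|)}$, the error $(ab)^\sharp-a^\sharp b^\sharp$ must, after multiplication by $(i\xi)^{\alpha+\beta}$, yield a symbol in $\prescript{r-1}{}{\tilde S}_{1,1}^{m+l-1}$. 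This is the classical paraproduct linearization: using the Littlewood-Paley description of $\sharp$ (equivalent to~\eqref{formulapsharp} by the Remark following Lemma~\ref{regularization}), the difference $(ab)^\sharp-a^\sharp b^\sharp$ contains only diagonal and resonant frequency interactions, which for such H\"older functions gain exactly one degree of regularity when $r\geq 1$, while automatically respecting the small-cone twisted-diagonal condition defining $\tilde S_{1,1}$.

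The main obstacle is the bookkeeping in this last scalar paradifferential estimate: one must track the varying H\"older regularity $r-(m-|\alpha|)$ of the coefficients together with the polynomial degree $|\alpha|+|\beta|$ of $(i\xi)^{\alpha+\beta}$, and verify that the cumulative loss in regularity and gain in symbolic order land precisely in $\prescript{r-1}{}{\tilde S}_{1,1}^{m+l-1}$ rather than in some intermediate class with fractional loss. In the Euclidean setting this is exactly the content of \cite[Th\'eor\`emes~3.2--3.3]{Bony1981} and \cite[Theorems~10.2.4--10.2.5]{Hormander-nonlinear}; since the regularization has been set up chart by chart via~\eqref{formulapsharp}, the manifold statement reduces to these Euclidean results within each chart, up to smoothing remainders arising from the cutoffs $\psi_j,\tilde\psi_j$ and the change-of-chart formula established in Lemma~\ref{changeofcoord}.
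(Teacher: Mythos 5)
Your proof is correct and takes essentially the same route as the paper: apply the symbolic calculus of Proposition~\ref{composition_mfd} (equivalently Proposition~\ref{expansionsymbol}) to reduce to the identities $(pq)^\sharp=p^\sharp q^\sharp$ and $\overline{p^\sharp}=\bar p^{\,\sharp}$ modulo the appropriate remainder class, reduce these to the chart-by-chart Euclidean statements via~\eqref{formulapsharp}, and cite Bony/H\"ormander for the scalar paralinearization. The only difference is one of verbosity — you spell out the inclusion $\tilde S^{m-r}_{1,1}\subset\prescript{r-1}{}{\tilde S}^{m-1}_{1,1}$ and the compatibility of $\sharp$ with conjugation (which tacitly requires the regularization cutoff $\chi$ to be chosen even in $\eta$, a harmless normalization) where the paper simply writes ``similar.''
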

\begin{proof}
Using Proposition \ref{expansionsymbol}, the proof reduces to showing that $\sigma(T_{pq})=\sigma(T_p)\sigma(T_q)$ and $\sigma(T_p^*)=\sigma(T_{\bar{p}})$. 
Since $\sigma(T_pT_q)=
\sigma(T_p)\sigma(T_q)=p^\sharp q^\sharp \in 
\prescript{r}{}{\tilde S}_{1,1}^{m+l}(M)/\prescript{r-1}{}{\tilde S}_{1,1}^{m+l-1}(M)$ by Proposition \ref{composition_mfd}, this reduces to showing that $(pq)^\sharp=p^\sharp q^\sharp$ modulo $\prescript{r-1}{}{\tilde S}_{1,1}^{m+l-1}(M)$. In view of our regularization definition, this fact reduces in local charts to the case in $\R^n$, which is proved in \cite[Theorem 10.2.5]{Hormander-nonlinear}. The second statement is similar.
\end{proof}

One of the main properties of the paradifferential operators in our setting is the decomposition of products of non-smooth functions, called \emph{paraproducts}. The idea, for $a,b$ in H\"older or Sobolev classes, is to replace $ab$ by some paradifferential operators up to smoother terms. 

\begin{prop}{\cite[Theorem 2.5]{Bony1981}, \cite[Section 3.5]{Taylor_nonlinear_book}, \cite[Theorem 10.2.8]{Hormander-nonlinear}}\label{paraproduct}.
 If~$a$ and~$b$ are~$L^\infty$ functions on~$M$, then
 \[ab=T_ab+T_ba+R(a,b),\]
 where the bilinear symmetric operator~$R$ as the following mapping properties:
 \begin{itemize}
  \item $R:C^r(M)\times H^s(M)\rightarrow H^{s+r}(M)$ if~$s>0,r>0$,
  \item $R:H^s(M)\times H^t(M)\rightarrow H^{s+t-\frac n2}(M)$ if~$s+t>n/2$,
  \item $R:C^r(M)\times C^\rho(M)\rightarrow C^{r+\rho}(M)$ if~$r,\rho>0$ and~$r+\rho$ is not an integer.
 \end{itemize}
In addition, if $a\in C^r(M)$, then $R_a:=R(a,\cdot)\in \Psi_{1,1}^{-r}(M)$.
\end{prop}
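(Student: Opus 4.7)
The plan is to reduce to $\R^n$ via local charts (following the construction of the regularization map in Lemma~\ref{regularization}) and apply a Littlewood–Paley dyadic decomposition, following Bony's original proof. Let $(P_k)_{k\geq 0}$ be such a decomposition on $\R^n$, with $P_k$ the Fourier multiplier by a smooth cutoff $\chi_k(\xi)$ localizing in $|\xi|\sim 2^k$ for $k\geq 1$ and in $|\xi|\lesssim 1$ for $k=0$, and set $S_k:=\sum_{j<k-N}P_j$ for a fixed integer $N\geq 2$. Writing $a=\sum_j P_j a$, $b=\sum_k P_k b$, I would split
\begin{equation*}
ab=\sum_{j<k-N}(P_j a)(P_k b)+\sum_{k<j-N}(P_j a)(P_k b)+\sum_{|j-k|\leq N}(P_j a)(P_k b).
\end{equation*}
The first sum equals $\sum_k (S_{k-N}a)(P_k b)$, which modulo a smoothing operator is $T_a b$ (the cutoff $\chi$ of Lemma~\ref{regularization} acts as an equivalent low/high frequency partition, cf.\ \cite[Section 3.2]{Taylor_nonlinear_book}); symmetrically the second sum gives $T_b a$, and the third one is by definition $R(a,b)$, manifestly symmetric in $a,b$.

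For the mapping properties of $R$, the crucial observation is that each resonant block $u_k:=\sum_{|j-k|\leq N}(P_j a)(P_k b)$ has Fourier support in a \emph{ball} $|\xi|\lesssim 2^k$ (not an annulus), which causes some loss but remains summable whenever the combined regularity is positive. For $R:C^r\times H^s\to H^{s+r}$ with $r,s>0$, Bernstein's inequality gives $\|P_j a\|_{L^\infty}\lesssim 2^{-jr}\|a\|_{C^r}$ and $\|P_k b\|_{L^2}\lesssim 2^{-ks}c_k\|b\|_{H^s}$ with $\|c_k\|_{\ell^2}\leq 1$, hence $\|u_k\|_{L^2}\lesssim 2^{-k(r+s)}c_k\|a\|_{C^r}\|b\|_{H^s}$. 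Regrouping the $u_k$ by output annulus $|\xi|\sim 2^m$ and summing over $k\geq m-C$ using $r+s>0$ yields the desired $H^{r+s}$ bound. The $H^s\times H^t\to H^{s+t-n/2}$ case is analogous, with Bernstein's inequality $\|P_k b\|_{L^\infty}\lesssim 2^{kn/2}\|P_k b\|_{L^2}$ producing the $n/2$ loss, and the Hölder case is the $L^\infty\times L^\infty$ analogue (the non-integer hypothesis avoids a logarithmic boundary).

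For the pseudodifferential statement $R_a\in\Psi^{-r}_{1,1}(M)$, I would exhibit the symbol
\begin{equation*}
\sigma_a(x,\xi):=\sum_k \Big(\sum_{|j-k|\leq N}P_j a(x)\Big)\chi_k(\xi),\qquad R_a u=\mathrm{Op}(\sigma_a)u.
\end{equation*}
Bernstein's inequality yields $\|\partial_x^\alpha P_j a\|_{L^\infty}\lesssim 2^{j(|\alpha|-r)}\|a\|_{C^r}$ and $|\partial_\xi^\beta \chi_k|\lesssim 2^{-k|\beta|}$ on the support $|\xi|\sim 2^k$; since $j\sim k$ in each summand, this gives $|\partial_x^\alpha\partial_\xi^\beta \sigma_a(x,\xi)|\lesssim \langle\xi\rangle^{-r+|\alpha|-|\beta|}$, which is the defining $S^{-r}_{1,1}$ estimate. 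I expect the main technical obstacle to lie in passing from $\R^n$ to the manifold $M$: Littlewood–Paley is an intrinsically Fourier-theoretic object, and one must either define the dyadic blocks via a spectral cutoff of the Laplacian $\Delta_{g_0}$ (and verify that the resulting symbols on $T^*M$ satisfy the required Hörmander estimates) or transfer the Euclidean decomposition chart by chart using the partition of unity $(\psi_j)$, at the cost of commutator terms $[\psi_j,P_k]$ that must be absorbed into smoothing remainders. Either route is routine but requires care to preserve the symmetry of $R(a,b)$ and the principal symbol of $R_a$; once this is settled, the entire argument reduces to the Euclidean references \cite{Bony1981,Taylor_nonlinear_book,Hormander-nonlinear}.
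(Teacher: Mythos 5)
Your Euclidean core is correct and matches the classical Littlewood--Paley argument of Bony/Meyer/Taylor: the trichotomy of the double sum, identification of $\sum_k (S_{k-N}a)(P_kb)$ with $T_ab$ modulo a negligible term, the Bernstein estimates giving the three mapping properties of the diagonal block sum, and the direct verification that $\sigma_a\in S^{-r}_{1,1}$ are all sound. Two small imprecisions: the discrepancy between the Littlewood--Paley version of $T_a$ and the cutoff-$\chi$ version of Lemma~\ref{regularization} is \emph{not} smoothing, it lies in $\tilde\Psi^{-r}_{1,1}$ when $a\in C^r$ (this is the content of \cite[Theor\`eme 2.1]{Bony1981}); this is harmless here only because any such term has the same mapping properties as $R$ and can be absorbed into it, and you should say so. Similarly the Fourier support of $u_k$ is a ball of radius $\sim 2^k$ (as you note), and summability over the output frequency $2^m$ really does need $r+s>0$; you use this correctly but it is the crux.

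Where you diverge from the paper is the passage to the manifold, and here the paper's reduction is both cleaner and worth internalising. Rather than building a global Littlewood--Paley theory on $M$ (spectral cutoffs of $\Delta_{g_0}$) or fighting commutators $[\psi_j,P_k]$, the paper exploits the \emph{bilinear} structure of the problem: since $\tilde\psi_j=1$ on $\supp\psi_j$ and $\sum_j\psi_j=1$, one has the exact identity $ab=\sum_j\psi_j(\tilde\psi_ja)(\tilde\psi_jb)=\sum_j\psi_j\,a_jb_j$ with $a_j,b_j$ compactly supported in the chart $U_j\simeq\R^n$. Applying the known Euclidean decomposition $a_jb_j=\mathrm{Op}_{\R^n}(a_j^\sharp)b_j+\mathrm{Op}_{\R^n}(b_j^\sharp)a_j+R_{\R^n}(a_j,b_j)$ to each term, and observing that the manifold paraproduct $T_a$ was \emph{defined} in Lemma~\ref{regularization} via precisely the symbol $a^\sharp=\sum_j\psi_ja_j^\sharp$, one reads off $R(a,b)=\sum_j\psi_jR_{\R^n}(a_j,b_j)$, whose mapping properties and membership in $\Psi^{-r}_{1,1}(M)$ are inherited chart by chart without any extra commutator analysis. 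The paper then simply cites the Euclidean mapping properties from \cite{Bony1981,Taylor_nonlinear_book} and the symbol estimate $R_a\in\Psi^{-r}_{1,1}$ from \cite[Theorem 10.2.8]{Hormander-nonlinear}, instead of rederiving them as you do. Your rederivation is self-contained and useful for checking constants, but the paper's globalization is the part you should not skip: the identity $ab=\sum_j\psi_ja_jb_j$ combined with the specific definition of $a^\sharp$ is exactly what makes the localization painless, and it sidesteps the "care" you correctly anticipated would be required by either of the routes you sketched.
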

\begin{proof} First we recall the results  of \cite[Theorem 2.5]{Bony1981}, \cite[Section 3.5]{Taylor_nonlinear_book} in $\R^n$: $a_jb_j={\rm Op}_{\R^n}(a_j^\sharp)b_j+{\rm Op}_{\R^n}(b_j^\sharp)a_j+R_{\R^n}(a_j,b_j)$ for any $a_j,b_j$ with compact support in $\R^n$ and $R_{\R^n}$ is bounded as claimed in the Proposition. 
It then suffices to write $ab=\sum_{j}\psi_j (\tilde{\psi}_ja)(\tilde{\psi}_jb)$ with $\psi_j,\tilde{\psi}_j$ as in the proof of Lemma \ref{regularization}, 
and we deduce, with $a_j=\tilde{\psi}_ja$ and $b_j=\tilde{\psi}_jb$, that 
\[\begin{split}
ab=& \sum_j \psi_ja_jb_j=\sum_{j} {\rm Op}_{\R^n}(a_j^\sharp)\tilde{\psi}_jb+{\rm Op}_{\R^n}(b_j^\sharp)\tilde{\psi}_j a +\psi_jR_{\R^n}(a_j,b_j)\\
=&  T_ab+T_ba+\sum_j \psi_jR_{\R^n}(a_j,b_j).
\end{split}\]
The fact that $R_a\in \Psi_{1,1}^{-r}(M)$ is proved in \cite[Theorem 10.2.8]{Hormander-nonlinear}.
\end{proof}

For a distribution $u\in C^{-\infty}(M)$, 
denote by ${\rm WF}_{H^s}(u)\subset T^*M\setminus\{0\}$ (where $\{0\}$ denotes the zero section) the complement in $T^*M\setminus\{0\}$ of those $(x_0,\xi_0)$ such that there is a conic (in $\xi$) neighborhood  $U\subset T^*M\setminus\{0\}$ of $(x_0,\xi_0)$ such that for all $A\in \Psi^0(M)$
with ${\rm WF}(A)\subset U$, one has $Au\in H^s(M)$. 
Note that $\WF(u)=\WF_{C^\infty}(u)$ and that, as usual with wavefront sets, using a partition of unity, cutoffs functions with small supports in charts, one can reduce the analysis to $\rr^n$ where $A=a(D)$ are Fourier multipliers.

Recall the well-known formula $\mathrm{WF}(ab)\subset (\mathrm{WF}(a)+\mathrm{WF}(b))$. 
The paraproduct decomposition induces a similar decomposition of the wavefront set, so that the wavefront set of $R(a,b)$ is included in $(\mathrm{WF}(a)+\mathrm{WF}(b))$. More specifically, we have the following result.
\begin{prop}\label{WF(R)}
Let $\eps\in (0,1), \alpha>0, \beta>0, \delta>0$,  and assume that $a,b\in C^\eps(M)$ and $a\in H^{\alpha}(M)$ and $b\in H^\beta(M)$. Then one has 
\[\mathrm{WF}_{H^{\min(\alpha,\beta)+\delta+\eps}}(R(a,b))\subset\mathrm{WF}_{H^{\alpha+\delta}}(a)+\mathrm{WF}_{H^{\beta+\delta}}(b).\]
\end{prop}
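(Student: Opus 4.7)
The approach combines a Littlewood--Paley decomposition of the paraproduct remainder with a microlocal conic decomposition in frequency. First I localize: since the statement concerns wavefront sets at $x_0$, I work in a chart identified with $\R^n$ and multiply $a$, $b$ by a spatial cutoff $\chi\in C_c^\infty$ equal to $1$ near $x_0$; this changes neither the wavefront sets of $a$, $b$ at $x_0$ nor the singular behavior of $R(a,b)$ near $(x_0,\xi_0)$, since the induced error terms can be reabsorbed into the two paraproducts $T_ab$, $T_ba$, which by definition have wavefront in $\WF(a)\cup\WF(b)$. So one may assume $a,b$ are compactly supported in $\R^n$. Writing $a=\sum_k \Delta_k a$ and $b=\sum_k\Delta_k b$, one has $R(a,b)=\sum_{k\ge 0,\,|l|\le N_0}\Delta_k a\cdot\Delta_{k+l} b$ for a fixed $N_0$, with each summand having Fourier transform supported in $\{|\xi|\lesssim 2^k\}$.

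The crucial step is a conic decomposition adapted to the hypothesis. Let $\pi_a\subset S^{n-1}$ (resp.\ $\pi_b$) be the radial projection of $\WF_{H^{\alpha+\delta}}(a)\cap T^*_{x_0}\R^n$ (resp.\ of $\WF_{H^{\beta+\delta}}(b)\cap T^*_{x_0}\R^n$). Since the sumset $\WF_{H^{\alpha+\delta}}(a)+\WF_{H^{\beta+\delta}}(b)$ is closed and conic and avoids $(x_0,\xi_0)$, compactness of $S^{n-1}$ furnishes a small conic neighborhood $W$ of $\xi_0$ and slightly enlarged conic neighborhoods $\Gamma_a\supset\pi_a$, $\Gamma_b\supset\pi_b$ with $\Gamma_a+\Gamma_b$ disjoint from $W$. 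Next I choose smooth $0$-homogeneous partitions of unity $\sum_j q^a_j=1$ and $\sum_i q^b_i=1$ on $S^{n-1}$, regularized near the origin to give Fourier multipliers $Q^a_j,Q^b_i$, such that each index is either \emph{good} (meaning $\supp(q^a_j)$ avoids $\pi_a$, hence $Q^a_j a\in H^{\alpha+\delta}$, symmetrically for $b$) or \emph{bad} with $\supp(q^a_j)\subset\Gamma_a$ (resp.\ $\supp(q^b_i)\subset\Gamma_b$). For any bad--bad pair $(j,i)$, the Fourier support of $Q^a_j\Delta_k a\cdot Q^b_i\Delta_{k+l} b$ sits in $\Gamma_a+\Gamma_b$ and hence outside $W$ for large $k$. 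A microlocal cutoff $\Psi=\psi(D)$ with $\psi$ $0$-homogeneous supported in $W$ then annihilates these bad--bad contributions up to smoothing errors.

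For a pair with $j$ good, the Littlewood--Paley characterization of $H^{\alpha+\delta}$ gives $\|Q^a_j\Delta_k a\|_{L^2}\lesssim c_k\, 2^{-k(\alpha+\delta)}$ with $(c_k)\in\ell^2$, while $b\in C^\eps$ together with the uniform $L^\infty$-boundedness of $Q^b_i$ on fixed-scale Littlewood--Paley pieces yields $\|Q^b_i\Delta_{k+l}b\|_{L^\infty}\lesssim 2^{-k\eps}$. Multiplying, $\|Q^a_j\Delta_k a\cdot Q^b_i\Delta_{k+l}b\|_{L^2}\lesssim c_k\,2^{-k(\alpha+\delta+\eps)}$. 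Since this product has Fourier support in a ball of radius $\lesssim 2^k$, a discrete Hardy-type summation places $\sum_k Q^a_j\Delta_k a\cdot Q^b_i\Delta_{k+l}b$ in $H^s$ for any $s<\alpha+\delta+\eps$; symmetrically, pairs with $i$ good lie in $H^s$ for $s<\beta+\delta+\eps$. Taking the minimum yields $\Psi R(a,b)\in H^{\min(\alpha,\beta)+\delta+\eps}$ (the endpoint loss being absorbed by the slack in $\delta$), which is exactly the claimed microlocal containment.

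The main obstacle is the combinatorial-geometric construction of the conic partition: one has to exploit $(x_0,\xi_0)\notin\WF_{H^{\alpha+\delta}}(a)+\WF_{H^{\beta+\delta}}(b)$ to guarantee that \emph{every} bad--bad frequency pair is eliminated by Fourier support from the cone $W$, while every other pair admits the requisite $H^{\alpha+\delta}$ or $H^{\beta+\delta}$ estimate. Once this geometric setup is in place, the remaining analysis reduces to standard Littlewood--Paley bookkeeping interpolating between Sobolev regularity on one factor and H\"older regularity on the other.
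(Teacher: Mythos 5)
Your approach is essentially the same as the paper's: both reduce to $\R^n$, expand $R(a,b)$ in Littlewood--Paley blocks, split each factor into a piece microlocalized near its $H^{\cdot+\delta}$ wavefront set and one away from it, kill the ``near--near'' frequency products by the disjointness of the conic sumset from the test direction, and bound the remaining pieces with an $L^2\times L^\infty$ estimate (Sobolev on the good factor, H\"older on the other) summed by a discrete Schur/Young argument. The differences are cosmetic: you use a conic partition of unity with ``good/bad'' indices where the paper uses a single cutoff $\rho_a$ (resp.\ $\rho_b$) per factor, and your remark that spatial-truncation errors ``reabsorb into the paraproducts'' is an unnecessary (and not quite correctly justified) shortcut, since in the paper $R$ on $M$ is already defined as a locally finite sum of Euclidean remainders so no such error arises; also there is no endpoint loss to absorb, as the borderline $s=\min(\alpha,\beta)+\delta+\eps$ case is handled directly by Young's inequality for the $\ell^1*\ell^2$ convolution.
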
 
\begin{proof}
	The result on $M$ can be classically deduced from the corresponding result in $\R^n$ by using charts and a partition of unity. Then by taking the supports small enough we can assume that we have closed cones $K_a,K_b\subset \R^n\setminus\{0\}$ in frequency space such that $\mathrm{WF}_{H^{\alpha+\delta}}(a) \subset \R^n\times K_a$ and $\mathrm{WF}_{H^{\beta+\delta}}(b)\subset \R^n\times K_b$ and prove that $R(a, b)\in H^{\min(\alpha,\beta)+\delta+\eps}(\R^n)$ away from $\R^n\times (K_a+K_b)$. We shall use the definition of $T_a$ using the Littlewood-Paley decomposition 
	
With those reductions in mind, we recall that we can use Littlewood-Paley decomposition to write $a = \sum_{j=-1}^\infty a_j$, with $\hat{a}_{-1}$ supported near $0$, and each other $\hat{a}_j$ supported in the dyadic annulus $\mathcal{C}_j=2^j\mathcal{C}$ where the annulus $\mathcal{C}$ around the unit sphere is such that each $\mathcal{C}_j$ intersects only $\mathcal{C}_{j-1}$ and $\mathcal{C}_{j+1}$ (hat denotes Fourier transform). Let $\rho_a\in C^\infty(\rr^n)$ (resp. $\rho_b\in C^\infty(\rr^n)$) be a smooth function, homogeneous of degree $0$ for large $\xi$, equal to $1$ near $K_a\cap \{\xi\geq 1\}$ (resp. $K_b\cap \{|\xi|\geq 1\}$) and $0$ away from a conic neighborhood of $K_a$ (resp. $K_b$). 
Since $\WF_{H^{\alpha+\delta}}(a)\subset \rr^n\times K_a$,	the sequence $2^{j (\alpha+\delta)}\lA (1-\rho_a(D))a_j\rA_{L^2}$ is in $\ell^2(\nn)$ (here $\rho_a(D)$ means the Fourier multiplier by $\rho_a(\xi)$).  
Similarly, writting $b=\sum_{j=0}^\infty b_j$ for the Littlewood-Paley decomposition of $b$, $\WF(b)\subset \rr^n\times K_b$ implies that the sequence $2^{j (\beta+\delta)}\lA (1-\rho_b(D))b_j\rA_{L^2}$ is in $\ell^2(\nn)$. The statement $a\in C^\eps$ is equivalent to saying that the sequence $2^{j \eps}\lA a_j\rA_{L^\infty}$ is in $\ell^\infty(\nn)$, with equivalence of the norms.
	
Now, using the Littlewood-Paley definition $T_au:=\sum_{k=-1}^\infty \sum_{j=-1}^{k-1}a_ju_k$ for the paradifferential operator, one can write $R = R(a, b) = \sum_{|j-k|<2}a_jb_k$. We see immediately from support considerations that the $\ell$-th Littlewood-Paley block of $R$, written $R_\ell$, is a sum of $a_jb_k$ with $|j-k|<2$ and $j>\ell-N_0$ for some fixed $N_0$.
	We choose a cut-off function $\chi_R$, homogeneous of degree $0$ for $|\xi|\geq 1$, supported away from $(K_a+K_b)$, and we look at the $L^2$ norm of $\chi_R(D)a_jb_k$ for such a piece $a_jb_k$, which is
\[\begin{split}
\chi_R(a_jb_k)=& \chi_R(D)((\rho_a(D)a_j)(\rho_bb_k)) + \chi_R(D)(((1-\rho_a(D))a_j)(\rho_b(D)b_k)) \\
		  & +\chi_R(D)((\rho_a(D)a_j)((1-\rho_b(D))b_k)) + \chi_R(D)(((1-\rho_a(D))a_j)((1-\rho_b(D))b_k)).
\end{split}\]
	The first term is zero since its Fourier transform is $\chi_R(\widehat{\rho_aa_j}\star\widehat{\rho_bb_k})$ and the support of the convolution is in $K_a+K_b$.
	
For the second term, there is some uniform constant $C>0$ such that
\begin{align*}
& \lA\chi_R(D)(((1-\rho_a(D))a_j)(\rho_b(D)b_k))\rA_{L^2}\leq\lA((1-\rho_a(D))a_j)\rA_{L^2}\lA b_k\rA_{L^\infty}\\
		&\leq C2^{-j(\alpha+\delta+\eps)}\left(2^{j(\alpha+\delta)} \lA((1-\rho_a(D))a_j)\rA_{L^2}\times2^{j\eps}\lA b_k\rA_{L^\infty}\right) \\
		&\leq C 2^{-j(\alpha+\delta+\eps)}\left(2^{j(\alpha+\delta)} \lA((1-\rho_a(D))a_j)\rA_{L^2}\times2^{k\eps}\lA b_k\rA_{L^\infty} \right)
	\end{align*}	
	where we have used  that multipliers of degree 0 are bounded on $L^2$, $\lA\rho_b(D)b_k\rA_{L^\infty}\leq C\lA b_k\rA_{L^\infty}$ with a constant $C$ independent of $k$ (this holds since the Fourier support of $b_k$ is in an annulus and $\rho_b$ is homogeneous of degree $0$ for large $\xi$) and, at the last line, that $k$ and $j$ are comparable. 
	
	For the third term, we have similarly
\begin{align*}
		&\lA\chi_R(D)((\rho_a(D)a_j)((1-\rho_b(D))b_k))\rA_{L^2}\leq C\lA a_j\rA_{L^\infty}\lA(1-\rho_b(D))b_k\rA_{L^2}\\
		&\leq C2^{j(\beta+\delta+\eps)}\left(2^{j\eps} \lA a_j\rA_{L^\infty}\times 2^{j(\beta+\eps)}\lA(1-\rho_b(D))b_k\rA_{L^2}\right) \\
		&\leq C 2^{-j(\beta+\delta+\eps)}\left(2^{j\eps} \lA a_j\rA_{L^\infty}\times2^{k(\beta+\delta)}\lA(1-\rho_b(D))b_k\rA_{L^2}\right).
	\end{align*}
The bounds on the last term are proved in the same way. In the end, we find
\begin{equation*}
	2^{\ell(\min(\alpha,\beta)+\eps)}\lA\chi_R(D)R_\ell\rA_{L^2}\leq C\sum_{j>\ell-N_0, |\nu|\leq 1}2^{-(j-\ell)(\min(\alpha,\beta)+\delta+\eps)}A_jB_{j-\nu}
\end{equation*}
where $A_j$ is an $\ell^\infty$ sequence and $B_k$ is an $\ell^2$ sequence. Writing $\gamma:=\min(\alpha,\beta)+\delta+\eps$, we have by Cauchy-Schwarz inequality 
\[\sum_{\ell=0}^\infty 2^{\ell(\min(\alpha,\beta)+\eps)}\lA\chi_R(D)R_\ell\rA_{L^2}^2\leq C\sum_{\ell=0}^{\infty} \sum_{k=-N_0,|\nu|\leq 1}^\infty 2^{-k\gamma}B_{\ell+k-\nu}^2\leq  C \sum_{k\geq 0} B_k^2<\infty
\] 
The result is proved.
\end{proof}

\subsection{Propagation estimates}
In this section, we will show that the usual propagation of singularity estimates for classical pseudo-differential operators $P\in \Psi^m(M)$ apply to paradifferential operators, as well as the radial type estimates (source and sink). The proofs are essentially the same for paradifferential operators once we have the material of the previous section. For convenience of the reader we shall give some details. 

First, we come back to the notion of principal symbol for $T_p$ if $P\in \prescript{r}{}{\rm Diff}^m(M)$.
It is convenient to consider the  fiber radial-compactification $\bbar{T}^*M$ 
of the cotangent bundle, see \cite[Section E.1.3]{DZ19} for details. It amounts to adding the sphere bundle $S^*M=T^*M\setminus\{0\}/\rr^+$ at infinity in $\xi$, and it becomes a ball-bundle with boundary $\pl\bbar{T}^*M=S^*M$. If the principal symbol $\sigma(A)$ of an operator $A\in \Psi^m(M)$ is homogeneous of degree $m$, it can then be viewed as a function $(|\xi|^{-m}\sigma(A))|_{\pl \bbar{T}^*M}$. Similarly, if $P\in  \prescript{r}{}{\rm Diff}^m(M)$, one recovers the principal symbol of $T_p$ modulo $C^{r-m}S^{m-\min(1,r)}_{1,1}(M)$:
\[ \sigma(T_p)= (|\xi|^{-m}\sigma(T_p))|_{\pl \bbar{T}^*M}=(|\xi|^{-m}\sigma(P))|_{\pl \bbar{T}^*M}.\]
Moreover, we notice that if $r>1$, $m=1$ and the principal symbol $p_1(x,\xi):=\sigma(P)(x,\xi)$
is real-valued, then $P$ can be written as $P=-iX+V$ where $X$ is a $C^r$-vector field defined by $p_1(x,\xi)=\xi(X(x))$ and where $V\in C^{r-1}(M)$ a potential. Moreover the flow $\varphi_t$ of $X$ is well-defined and with regularity $C^{r}(M,M)$. As explained in \cite[1st paragraph of Section 4.2]{Cekic-Guillarmou}, the Hamilton 
vector field $H_{p_1}$ of $p_1(x,\xi)$ and its flow $e^{tH_{p_1}}$ are also well-defined and given by the symplectic lift 
\[\Phi_t(x,\xi)=e^{tH_{p_1}}(x,\xi)=(\varphi_t(x),(d\varphi_t(x)^{-1})^T\xi).\]   
Since $p_1$ is homogeneous of degree $1$, then $H_{p_1}$ is homogeneous of degree $0$ and it extends, as well as its flow, as a $C^{r-1}$ vector field and flow on $\bbar{T}^*M$. Note also that $H_{p_1}f=Xf$ if $f$ is independent of $\xi$ (i.e. the pull-back of a function on $M$).

We start with a technical Lemma.
\begin{lemm}\label{approximationlemma}
Let $H_{p_1}$ be the Hamilton vector field of $p_1(x,\xi)=\xi(X(x))$ on $T^*M$ with $X\in C^r(M;TM)$ for $r>1$. Let $u\in C^{r-1}(T^*M)$, smooth in the $\xi$ variable in the sense that $\pl_{\xi}^\beta u\in C^{r-1}(T^*M)$ for all $\beta$, and homogeneous of degree $0$ in $\xi$ for $|\xi|$ large. If $H_{p_1}u\in C^{r-1}(T^*M)$, then  for all $\eps>0$ there exists $v\in S^0(M)$, homogeneous of degree $0$ in $\xi$ for  $|\xi|$ large, so that $\|u-v\|_{L^\infty}<\eps$ 
and $\|H_{p_1}(u-v)\|_{L^\infty}<\eps$. Moreover, if $u\geq 0$, we can choose $v\geq 0$.
\end{lemm}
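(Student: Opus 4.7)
The plan is to define $v$ as a mollification of $u$ in the $x$-variable only, and to control both $u - v$ and $H_{p_1}(u-v)$ in $L^\infty$ via a Friedrichs-type commutator estimate. Fix a finite cover $(U_\alpha)$ of $M$ by coordinate charts with a nonnegative subordinate partition of unity $(\chi_\alpha)$, pulled back to $T^*M$ as functions of $x$ only. Since $H_{p_1}(\chi_\alpha u) = (X\chi_\alpha) u + \chi_\alpha H_{p_1}u$ still belongs to $C^{r-1}(T^*M)$, it suffices to approximate each $\chi_\alpha u$ individually in the desired norm and then sum. In local coordinates on $U_\alpha$, let $\phi_\delta(z) = \delta^{-n}\phi(z/\delta)$ where $\phi\in C_c^\infty(\R^n)$ is nonnegative with $\int\phi = 1$, and set
\[v_{\alpha,\delta}(x,\xi) := \int_{\R^n} (\chi_\alpha u)(x-z,\xi)\,\phi_\delta(z)\,dz.\]
For each fixed $\delta>0$ this lies in $S^0(M)$ (smoothness in $x$ from $\phi_\delta$, smoothness and homogeneity of degree $0$ in $\xi$ inherited from $u$, with symbol bounds $|\partial_x^\alpha\partial_\xi^\beta v_{\alpha,\delta}|\leq C_{\alpha,\beta,\delta}\langle\xi\rangle^{-|\beta|}$), and $\|\chi_\alpha u - v_{\alpha,\delta}\|_{L^\infty}\to 0$ as $\delta \to 0$ by uniform continuity of $\chi_\alpha u$, the effective domain being compact via the radial compactification $\bar T^*M$.

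The core of the proof is the estimate on $\|H_{p_1}(\chi_\alpha u - v_{\alpha,\delta})\|_{L^\infty}$. Decompose
\[H_{p_1}(\chi_\alpha u - v_{\alpha,\delta}) = \bigl[H_{p_1}(\chi_\alpha u) - (H_{p_1}(\chi_\alpha u))*_x\phi_\delta\bigr] + [*_x\phi_\delta, H_{p_1}](\chi_\alpha u).\]
The first bracket tends to $0$ in $L^\infty$ by continuity of $H_{p_1}(\chi_\alpha u)\in C^{r-1}$ and standard mollification. For the commutator, expand $H_{p_1} = X^i\partial_{x_i} - \xi_j(\partial_iX^j)\partial_{\xi_i}$ in local coordinates. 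The $\partial_{\xi_i}$ part commutes with $*_x\phi_\delta$ up to
\[\int \xi_j\bigl[(\partial_iX^j)(x) - (\partial_iX^j)(y)\bigr](\partial_{\xi_i}(\chi_\alpha u))(y,\xi)\phi_\delta(x-y)\,dy,\]
which tends to $0$ in $L^\infty$ because $\partial_iX^j\in C^{r-1}$ is continuous ($r>1$) and $\xi_j\partial_{\xi_i}u$ is bounded by the degree-$0$ homogeneity of $u$ at infinity. The $X^i\partial_{x_i}$ piece is the classical Friedrichs commutator: transferring the $\partial_{x_i}$ from $u$ onto $\phi_\delta$ via integration by parts (legitimate for $u$ merely continuous) yields the identity
\[[X^i\partial_{x_i},*_x\phi_\delta](\chi_\alpha u)(x,\xi) = \int(\chi_\alpha u)(y,\xi)[X^i(x)-X^i(y)](\partial_{x_i}\phi_\delta)(x-y)\,dy + \int(\chi_\alpha u)(y,\xi)(\partial_iX^i)(y)\phi_\delta(x-y)\,dy.\]
After rescaling $z = (x-y)/\delta$, the Taylor expansion $X^i(x) - X^i(x-\delta z) = \delta z \cdot \nabla X^i(x) + o(\delta)$ (valid since $X \in C^1$) combined with $\int z_k\partial_i\phi(z)\,dz = -\delta_{ki}$ shows that the first integral converges to $-(\partial_iX^i)(x)(\chi_\alpha u)(x,\xi)$ in $L^\infty$, while the second converges to $+(\partial_iX^i)(x)(\chi_\alpha u)(x,\xi)$; the two contributions cancel and the commutator tends to $0$ uniformly.

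Setting $v := \sum_\alpha v_{\alpha,\delta}$ for $\delta$ sufficiently small then gives the desired element of $S^0(M)$, and nonnegativity of $v$ when $u \geq 0$ follows automatically from $\phi_\delta,\chi_\alpha \geq 0$. The principal obstacle is the Friedrichs commutator step: neither integral in the identity above vanishes individually in the limit, and the conclusion rests on the subtle cancellation of the two nonzero limits. This is essentially Friedrichs's classical lemma, which requires only the $C^1$ regularity of $X$ (ensured here by $r>1$) applied to a continuous function $u$.
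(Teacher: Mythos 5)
Your proposal is correct and takes essentially the same approach as the paper: mollify only in the base variable $x$, and control $H_{p_1}(u-v)$ by the classical Friedrichs commutator cancellation between the Taylor-expanded kernel term and the $\operatorname{div}X$ term (with the $\partial_\xi$ part of $H_{p_1}$ handled by continuity of the $C^{r-1}$ coefficients). The paper additionally keeps track of the explicit rate $\mathcal{O}(\delta^{r-1})$, but that quantitative refinement is not needed for the statement.
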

\begin{proof} First, using a partition of unity, remark that it suffices to assume that $u$ is supported in $T^*U_j$ where $U_j\subset M$ is a chart.
Then, observe that in local coordinates in the chart
$H_{p_1}=X(x)-\sum_{k}\pl_{x_k}(\xi(X(x)))\pl_{\xi_k}$ if $p_1(x,\xi)=\xi(X(x))$ for $X\in C^{r}(M;TM)$. Since $u$ and $H_{p_1}u$ are homogeneous of degree $0$ for $|\xi|$ large, 
it suffices to consider the bound for $|\xi|$ bounded, provided we take $v$ homogeneous of degree $0$ for $|\xi|$ large. We set, after identifying $U_j\simeq B(0,1)\subset \R^n$, for $\chi\in C_c^\infty(B(0,1);[0,1])$ with $\int \chi=1$  
\[ R_\eps u(x,\xi):=\eps^{-n}\int_{\R^n}\chi(\frac{x-y}{\eps})u(y,\xi)dy.\]
It is a routine exercise to see that $\|R_\eps u-u\|_{L^\infty}=\mc{O}(\eps^s\|u\|_{C^s})$ if $u\in C_c^s(B(0,1)\times K)$ for $s\in (0,1)$, where $K\subset \R^n$ is a compact set (in the $\xi$ variable). Moreover $R_\eps u\geq 0$ if $u\geq 0$ and $R_\eps$ commutes with all $\pl_{\xi_k}$.
We have $X(R_\eps-1) =(R_\eps-1) X+B_\eps$ where $B_\eps:=[X, R_\eps]$ and 
$\|(R_\eps-1) Xu\|_{L^\infty}=\mc{O}(\eps^{r-1}\|Xu\|_{C^{r-1}})$ by using that $Xu\in C^{r-1}$. 
Let us then study $B_\eps$. Writing $X(x)=\sum_{k}X_k(x)\pl_{x_k}$ and using that $X_k\in C^{r}(B(0,1))$, we get for each $f\in C^\infty$
\[\begin{split}
B_\eps f(x,\xi)=& \sum_k\eps^{-n}\int_{\R^n}\Big(\chi(\frac{x-y}{\eps})\pl_{y_k}X_k(y)+(\pl_{y_k}\chi)(\frac{x-y}{\eps})\frac{X_k(x)-X_k(y)}{\eps}\Big)f(y,\xi)dy\\
=& \sum_k\pl_{x_k}X_k(x)f(x,\xi)+\sum_j \pl_{x_j}X_k(x)\int \pl_{z_k}\chi(z)z_j f(x-\eps z,\xi)dz+\mc{O}(\eps^{r-1}\|f\|_{C^{r-1}})\\
=& \mc{O}(\eps^{r-1}\|f\|_{C^{r-1}}),
\end{split}\]
where we used $R_\eps f=f+\mc{O}(\eps^{r-1}\|f\|_{C^{r-1}})$ and 
$(\pl_{x_k}X_k)(y)=(\pl_{x_k}X_k)(x)+\mc{O}(\eps^{r-1})$ for $|x-y|<\eps$ in the second line, and 
$f(x-\eps z,\xi)=f(x)+\mc{O}(\eps^{r-1}\|f\|_{C^{r-1}})$ together with integration by parts in the third line. We conclude that $\|H_{p_1}(u-R_\eps u)\|_{L^\infty}=\mc{O}(\eps^{r-1})$ and we obtain the desired result by setting $v:=R_{\delta(\eps)}u$ for $\delta(\eps)=\eps^{\frac{1}{r-1}}$ with $\eps>0$ small.
\end{proof}

Bony proved in \cite[Theorem 6.2]{Bony1981} a propagation of singularity result for real principal type operators $T_p\in \prescript{r}{}{\tilde \Psi}_{1,1}^{m}(M)$ with $r>1$ and $P={\rm Op}(p)\in \prescript{r}{}{\rm Diff}^m(M)$. We recall the proof for the reader's convenience for the case $m=1$ (which is our case of interest). 
\begin{prop}{\cite[Theorem 6.2 and 6.2']{Bony1981}}\label{propofsing}
Let~$P^\sharp:=T_p\in \prescript{r}{}{\tilde \Psi}_{1,1}^1(M)$, with~$r\in (1,2)$ and $P={\rm Op}(p)\in \prescript{r}{}{\rm Diff}^1(M)$ and assume that $p_1:=\sigma(P)$ is real-valued. 
Let
\[\Phi_t=\exp(tH_{p_1}): T^*M\rightarrow T^*M\]
be the flow of the Hamilton vector field of $p$.
Let~$A$, $B$, $B_1$ in~$\Psi^0(M)$, such that 
for~$(x,\xi)\in\mathrm{WF}(A)$, there is some~$T\geq0$ such that
\begin{equation}\label{controlcond}
\Phi_{-T}(x,\xi)\in\mathrm{ell}(B);\quad\Phi_t(x,\xi)\in\mathrm{ell}(B_1)\text{ for all }t\in[-T,0].\end{equation}
Then for all~$s\in\R$, all~$\eps>0$, there is $C>0$ such that for all $u\in H^{s-(r-1)+\eps}(M)$ satisfying
$Bu\in H^s(M)$ and $B_1f\in H^s(M)$ with $P^\sharp u=f$, we have $Au\in H^s(M)$ and 
\begin{equation}\label{estimeeProp}
\lA Au\rA_{H^s(M)}\leq C\lA Bu\rA_{H^s(M)}+C\lA B_1f\rA_{H^s(M)}+C\lA u\rA_{H^{s-(r-1)+\eps}(M)}.
\end{equation}
\end{prop}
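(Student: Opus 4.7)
The plan is to run Hörmander's positive commutator argument, adapted to the paradifferential setting by using Lemma \ref{approximationlemma} to regularize the rough escape function along the $C^{r-1}$ Hamilton flow of $p_1$.

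\textbf{Setup and escape function.} Since $p_1=\sigma(P)$ is real, Proposition \ref{composePara} yields $R_1:=(P^\sharp)^*-P^\sharp \in \prescript{r-1}{}{\tilde\Psi}_{1,1}^0(M)$, which is bounded on every $H^{s'}(M)$ by Proposition \ref{boundedness}. The Hamilton vector field $H_{p_1}$ is $C^{r-1}$ on $\bbar T^*M$ with flow $\Phi_t$. Using the control condition \eqref{controlcond} together with compactness of $\WF(A)\cap S^*M$, I construct (by integrating along $\Phi_t$ a smooth cutoff near $\Phi_{-T}(\WF(A))\subset \ell(B)$) a nonnegative escape function $q\in C^{r-1}(T^*M)$, homogeneous of degree $0$ for large $|\xi|$, with $\supp q\subset \ell(B_1)$, $q$ elliptic on a neighborhood of $\WF(A)$, and $H_{p_1}q=-b^2+e^2$ where $b,e\in C^{r-1}$, $b$ is elliptic on $\WF(A)$, and $\supp e\subset \ell(B)\cap \ell(B_1)$.

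\textbf{Regularization, commutator and G\aa rding.} For $\delta>0$, Lemma \ref{approximationlemma} produces smooth nonnegative $q_\delta,b_\delta,e_\delta\in S^0(M)$ with $\|q-q_\delta\|_{L^\infty}+\|H_{p_1}(q-q_\delta)\|_{L^\infty}\leq \delta$. Set $A_\delta:=\Op(q_\delta\ls\xi\rs^{2s})\in \Psi^{2s}(M)$, $\tilde B_\delta:=\Op(b_\delta\ls\xi\rs^s)$ and $\tilde E_\delta:=\Op(e_\delta\ls\xi\rs^s)$. Proposition \ref{composition_mfd} yields $[P^\sharp,A_\delta]\in \prescript{r-1}{}{\tilde\Psi}_{1,1}^{2s}(M)$ with principal symbol
\[
\sigma(i[P^\sharp,A_\delta])=(H_{p_1}q_\delta)\ls\xi\rs^{2s}+q_\delta\cdot i^{-1}\lB p_1,\ls\xi\rs^{2s}\rB=(-b_\delta^2+e_\delta^2+O_{L^\infty}(\delta))\ls\xi\rs^{2s}+\text{l.o.t.}
\]
Pairing $\ls i[P^\sharp,A_\delta]u,u\rs$, using $(P^\sharp)^*=P^\sharp+R_1$ to substitute $P^\sharp u=f$ on the right, and applying sharp G\aa rding (Proposition \ref{Garding}) to turn this symbolic identity into an operator inequality yields
\[
\|\tilde B_\delta u\|_{L^2}^2 \leq \|\tilde E_\delta u\|_{L^2}^2+C|\ls A_\delta f,u\rs|+C\delta\|u\|_{H^s}^2+C\|u\|_{H^{s-1/2}}^2.
\]
Since $\supp q_\delta\subset \ell(B_1)$ and $\supp e_\delta\subset \ell(B)\cap \ell(B_1)$, Corollary \ref{estimeeell} applied to $B,B_1$ converts the first two right-hand-side terms into $C\|B_1 f\|_{H^s}\|u\|_{H^s}$ and $C\|Bu\|_{H^s}^2$. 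Ellipticity of $b_\delta$ on $\WF(A)$ together with Corollary \ref{estimeeell} gives $\|Au\|_{H^s}\leq C\|\tilde B_\delta u\|_{L^2}+\text{l.o.t.}$; choosing $\delta$ small and running the standard a priori regularization (replacing $u$ by $\Lambda_\rho u$ for a smoothing family, deriving the estimate uniformly in $\rho$, and taking $\rho\to0$) lets one absorb the $\delta\|u\|_{H^s}^2$ contribution under the a priori hypothesis $u\in H^{s-(r-1)+\eps}$.

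\textbf{Main obstacle.} The delicate point is extracting the exact loss $H^{s-(r-1)+\eps}$. The paradifferential commutator calculus of Proposition \ref{composition_mfd} only gives a subprincipal symbol modulo $\prescript{r-1}{}{\tilde S}_{1,1}^{2s-1}(M)$, with no further symbolic expansion available when $r-1<1$; concurrently, Lemma \ref{approximationlemma} approximates $H_{p_1}q$ only in $L^\infty$ at cost $\delta$, amounting to a loss of at most $r-1$ derivatives. Balancing these two error sources against the ellipticity margin of $\tilde B_\delta$ on $\WF(A)$, via a careful choice of $\delta$ and the a priori regularization scheme, produces the announced $\eps$-loss for any $\eps>0$.
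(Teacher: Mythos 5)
The broad strategy (positive commutator with a regularized escape function, Gårding, iteration) is the right one and matches the paper's, but the construction of the escape function is too weak to close the estimate. The key problem is your claim that the weight term $q_\delta\cdot i^{-1}\{p_1,\ls\xi\rs^{2s}\}$ is ``l.o.t.'': since $p_1$ is degree-$1$ homogeneous, $H_{p_1}$ is degree-$0$, so this equals $2s\,q_\delta\,\frac{H_{p_1}\ls\xi\rs}{\ls\xi\rs}\ls\xi\rs^{2s}$, which is of the full order $\ls\xi\rs^{2s}$ and carries no sign on all of $\supp q_\delta$. The same applies to the skew-adjoint contribution $R_1=(P^\sharp)^*-P^\sharp\in\prescript{r-1}{}{\tilde\Psi}_{1,1}^0(M)$: once paired with $A_\delta$ it also produces an $O(\ls\xi\rs^{2s})$ symbol with no sign on $\supp q_\delta$. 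Neither of these can be controlled by $-b_\delta^2+e_\delta^2$ if $b$ is only elliptic on $\WF(A)$, since they live on the whole of $\supp q\setminus \ell(B)$. This is precisely why the paper uses Bony's Lemme 6.6 to produce a smooth $g$ with $H_{p_1}g\leq -\beta g$ on $\bbar{T}^*M\setminus\ell(B)$ for \emph{arbitrary} $\beta$: taking $\beta>C_1+C_2$, where $C_1$ comes from $s H_{p_1}\log\ls\xi\rs$ and $C_2$ from $\Im P^\sharp$, gives a genuinely negative symbol everywhere on $\supp g\setminus\ell(B)$. Your construction (integrating a smooth cutoff along $\Phi_t$) would instead yield $q\sim\int_0^T\chi\circ\Phi_{-t}\,dt$ with $H_{p_1}q=\chi\circ\Phi_{-T}-\chi$, which has no $-\beta q$ dominance; you would need to insert an explicit $e^{-\beta t}$ weight in the integral, and you did not.

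There is a second issue with the error management. Lemma~\ref{approximationlemma} gives $\|H_{p_1}(q-q_\delta)\|_{L^\infty}\leq\delta$, which after Gårding contributes $\delta\|u\|_{H^s}^2$ on the whole of (a neighbourhood of) $\supp q$, not just near $\WF(A)$. You claim this is absorbed ``under the a priori hypothesis $u\in H^{s-(r-1)+\eps}$'', but $\delta\|u\|_{H^s}^2$ is \emph{not} bounded by $C\|u\|_{H^{s-(r-1)+\eps}}^2$; the absorption must be into a genuinely elliptic good term (here $\|G_\eps u\|_{L^2}^2$ with $G$ elliptic on $\supp q\setminus\ell(B)$), which again requires the Bony-type escape function. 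Finally, note that the paper still has a residual $\|B_1 u\|_{H^{s-\eps_0}}^2$ term after the Gårding step, which it removes by the iteration/interpolation argument of~\cite[Theorem E.47]{DZ19}; your proposal skips this step, which is where the precise $(r-1)-\eps$ loss is actually extracted. The regularization in $\rho$ to pass from a priori estimates on smooth $u$ to the stated a priori regularity, which you do mention, is correct but is a separate issue from absorbing the $\delta$-term.
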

\begin{proof} We follow the proof in \cite[Theorem E.47]{DZ19} given for operators in $\Psi^m(M)$ and will focus mainly on the differences due to our assumptions. Without loss of generality, we may assume that $B_1$ has non-negative principal symbol, is self-adjoint and ${\rm WF}(1-B_1)\cap \cup_{t=0}^{T}\Phi_{-t}({\rm WF}(A))=\emptyset$.
As explained above, the Hamilton field $H_{p_1}$ and its flow $\Phi_t$ extend to $\bbar{T}^*M$ in a  $C^{r-1}$ fashion.
By \cite[Lemme 6.6]{Bony1981}, for each $\beta\geq 0$, there is $g\in C^\infty(\bbar{T}^*M, \rr^+)$ homogeneous of degree $0$ such that ${\rm supp}(g)\subset {\rm ell}(B_1)$ with $g>0$ on ${\rm WF}(A)$ and $H_pg\leq -\beta g$ on $\bbar{T}^*M\setminus {\rm ell}(B)$. Set $G={\rm Op}(\cjg \xi\cjd^{s}g)\in \Psi^{s}(M)$ with ${\rm WF}(G)\subset {\rm ell}(B_1)$. Take $u\in C^\infty(M)$, $f=P^\sharp u$ and 
write
\[ {\rm Im}\cjg f,G^*Gu\cjd={\rm Im}\cjg {\rm Re}(P^\sharp)u,G^*Gu\cjd+{\rm Re}\cjg {\rm Im}(P^\sharp)u,G^*Gu\cjd\] 
The first term in the RHS can be written as $\cjg Zu,u\cjd$ where $Z:=\frac{i}{2}[{\rm Re}(P^\sharp),G^*G]$. Since ${\rm Re}(P^\sharp) \in \prescript{r}{}{\tilde \Psi}_{1,1}^1(M)$ with $r>1$, Proposition \ref{composition_mfd} insures that $Z\in \prescript{r-1}{}{\tilde \Psi}_{1,1}^{2s}(M)$. Moreover, using Proposition \ref{composition_mfd} and \eqref{symbolpara}, $Z$ has  principal symbol 
\[ \sigma(Z)=\cjg \xi\cjd^{2s}( gH_{p_1}g+s H_{p_1}(\log \cjg \xi\cjd) g^2)+o(|\xi|^{2s})\]
as $|\xi|\to \infty$.
Thus there is $C_1>0$ such that $\sigma(Z+(\beta-C_1)G^*G)\leq 0$ near $\bbar{T}^*M\setminus {\rm ell}(B)$ for $|\xi|$ large enough, and by the sharp G\aa rding inequality (Proposition \ref{Garding}) applied to $B_1ZB_1$, then for each $N>0$ there is $C>0$ so that
\[ \cjg ZB_1u,B_1u\cjd\leq (C_1-\beta)\|Gu\|_{L^2(M)}^2+C\|Bu\|^2_{H^s(M)}+C\|B_1u\|^2_{H^{s-\frac{r-1}{4}}(M)}+C\|u\|_{H^{-N}(M)}.\]
Note that $(1-B_1)Z(1-B_1)\in \Psi^{-\infty}(M)$, 
$(1-B_1)ZB_1\in \tilde{\Psi}_{1,1}^{2s-r-1}(M)$ and $B_1Z(1-B_1)\in \tilde{\Psi}_{1,1}^{2s-(r-1)}(M)$ using \eqref{prserveWF}, thus for each $\eps_0\in (0,(r-1)/2)$
\[ |\cjg (Z-B_1ZB_1)u,u\cjd|\leq C\|B_1u\|^2_{H^{s-\eps_0}(M)}+C\|u\|^2_{H^{s-(r-1)+\eps_0}(M)}.\]
Next, we deal with $\cjg {\rm Im}(P)u,G^*Gu\cjd$:  
first we have 
\[ |\cjg {\rm Im}(P^\sharp)u,G^*Gu\cjd| \leq |\cjg {\rm Im}(P^\sharp)Gu,Gu\cjd|+\cjg G^*[G,{\rm Im}(P^\sharp )]u,u\cjd|.\]
Using that $p_1=\sigma(P^\sharp)$ is real, we have ${\rm Im}(P^\sharp)\in \prescript{r-1}{}{\tilde \Psi}^{0}_{1,1}(M)$ by Proposition \ref{composePara}, and using Proposition \ref{composition_mfd} we get 
$G^*[G,{\rm Im}(P^\sharp)]\in  {\tilde \Psi}^{2s-(r-1)}_{1,1}(M)$. Together with Proposition \ref{boundedness} and the fact that ${\rm WF}(G)\subset {\rm ell}(B_1)$, we obtain for $\eps_0>0$ as above
\[|\cjg {\rm Im}(P^\sharp)u,G^*Gu\cjd| \leq C_2\|Gu\|^2_{L^2(M)}+C\|B_1u\|^2_{H^{s-\eps_0}(M)}+C\|u\|^2_{H^{s-(r-1)+\eps_0}(M)}\]
for some constant $C_2>0,C>0$. Therefore 
\[|{\rm Im}\cjg f,G^*Gu\cjd|\leq (C_1+C_2-\beta)\|Gu\|_{L^2(M)}^2+C\|Bu\|^2_{H^s(M)}+C\|B_1u\|^2_{H^{s-\eps_0}(M)}+C\|u\|^2_{H^{s-(r-1)+\eps_0}(M)}. \]
Taking $\beta=C_1+C_2+1$, and using $|{\rm Im}\cjg f,G^*Gu\cjd|\leq C\|B_1f\|_{H^s}^2+\|Gu\|^2_{L^2}$ we deduce that
\begin{equation}\label{boundGu} 
\|Gu\|_{L^2}^2\leq C\|B_1f\|^2_{H^s}+
C\|Bu\|^2_{H^s}+C\|B_1u\|^2_{H^{s-\eps_0}}+C\|u\|^2_{H^{s-(r-1)+\eps_0}}.
\end{equation}
Since $\|Au\|_{H^{s}(M)}\leq C\|Gu\|_{L^2}$ for some $C>0$ by elliptic estimates, we get
\[\|Au\|_{L^2}^2\leq C\|B_1f\|^2_{H^s}+
C\|Bu\|^2_{H^s}+C\|B_1u\|^2_{H^{s-\eps_0}}+C\|u\|^2_{H^{s-(r-1)+\eps_0}}.\]
We can then iterate this argument just as step 5 in the proof of \cite[Theorem E.47]{DZ19}, and improve the $\|B_1u\|^2_{H^{s-\eps_0}}$ term to $\|B_1u\|^2_{H^{s-\ell \eps_0}}$ for all $\ell$ large, we then obtain \eqref{estimeeProp} for $u\in C^\infty(M)$. Using the regularization argument \cite[Lemma E.45]{DZ19}, \eqref{estimeeProp} also holds for $u\in H^{s}(M)$ so that $P^\sharp u\in H^s(M)$.
To extend to general $u\in H^{s-(r-1)+\eps_0}(M)$ with $B_1Pu\in H^s$ and $Bu\in H^s$, one can use the regularization procedure explained in \cite[Section E.7, Exercices 10 and 31]{DZ19} which adapts exactly in the same way to our setting: set $N=s+(r-1)+\eps_0$, we use the regularization operator $X_\epsilon:=\mathrm{Op}(\ls\epsilon\xi\rs^{-N})$
and an elliptic approximate inverse~$Y_\epsilon$ so that $X_\eps Y_\eps=1+\mc{O}_{\Psi^{-\infty}(M)}(\eps^\infty)$; as operators in $\Psi^0(M)$, they are bounded uniformly in~$\epsilon$; 
for $r>0$, one can use Proposition \ref{composition_mfd} to get 
\begin{equation}\label{Peps}
P^\sharp_\epsilon:=X_\epsilon P^\sharp Y_\epsilon=P^\sharp+i\mathrm{Op}(\ls\epsilon\xi\rs^{N}\{ p_1,\ls\epsilon\xi\rs^{-N}\})+\mc{O}_{\tilde\Psi_{1,1}^{-(r-1)}}(1),\end{equation}
as in the classical case. The symbol $\ls\epsilon\xi\rs^{N}\{ p_1,\ls\epsilon\xi\rs^{-N)}\}$ is uniformly bounded in~ $S^{0}_{1,1}(M)$, the region in $\bbar{T}^*M$ 
where $P^\sharp_\eps$ and $P^\sharp$ are not microlocally in $\tilde{\Psi}^{1-r}_{1,1}(M)$ are the same (thus uniform in $\eps$), and~$u\in H^{-N}(M)$ is in~$H^{s}(M)$ if~$X_\epsilon u$ is uniformly bounded in~$H^s(M)$. Using this, the proof above extends to give 
\[\|X_\eps Au\|_{L^2}^2\leq C\|X_\eps B_1f\|^2_{H^s}+
C\|X_\eps Bu\|^2_{H^s}+C\|u\|^2_{H^{-N}}\]
uniformly in $\eps>0$, and by letting $\eps\to 0$ we obtain the result.
\end{proof}
\begin{rem} This result also extends in the obvious way if $P\in \prescript{r}{}{\rm Diff}^1(M;E)$ acts on a bundle $E$ and the principal symbol of $P$ is of the form $p_1 \otimes {\rm Id}$ for some $p_1\in C^r(T^*M)$ as above.
\end{rem}

The second type of propagation estimates are the radial estimates. These were introduced by Melrose in scattering theory \cite{Melrose1994} and developped by Dyatlov-Zworski \cite{DZ16a,DZ19} for smooth Anosov flows. We will now explain how to modify their proof to adapt them to paradifferential operators. We notice that for smooth generators of hyperbolic flows, 
radial source/sink estimates are also proved by Guedes Bonthonneau-Lefeuvre in H\"older spaces 
\cite{Bonthonneau-Lefeuvre}.

We assume that~$P$ is as in Proposition \ref{propofsing}. The principal symbol will be~$\sigma(P)=p_1$, real valued and homogeneous of degree $1$, and we denote by $\Phi_t=e^{tH_{p_1}}$ the Hamilton flow acting on the fiber-radial compactification of the cotangent bundle $\bbar{T}^*M$.

A \emph{radial source} is a nonempty compact $\Phi_t$-invariant set 
\[L\subset\lB\ls\xi\rs^{-1}p=0\rB\cap\partial \overline T^*M,\]
such that for some neighborhood $U\subset \bbar{T}^*M$ 
of $L$, there is $C,\nu>0$ such that uniformly in $(x,\xi)\in U$, 
\[ \lim_{t\to -\infty}{\rm dist}(\kappa(\Phi_t(x,\xi)), L)=0, \quad  \forall t\leq 0 ,\quad |\Phi_t(x,\xi)|\geq Ce^{\nu |t|}|\xi|;\]
here $\kappa: T^*M\to \pl\bbar{T}^*M$ is the canonical projection. 
A \emph{radial sink} is a radial source for $-p_1$.

A function $a\in C^0(\bbar{T}^*M)$ is called \emph{eventually positive on}~$L$ if $\exists T>0$ such that
\[\int_0^T a\circ\Phi_t\d t>0\quad\text{on }L.\]
A symbol is said to be eventually negative if $-a$ is eventually positive.

\begin{lemm}\label{approximationb}
Let $P$ be as in Proposition \ref{propofsing} with principal symbol $\sigma(P)=p_1$.
Let $a\in \prescript{s}{}{\tilde S}^{0}_{1,1}(M)$ for $s\in (0,1)$ such that that there exists $a_0\in C^{s}S^0(M)$, homogeneous of degree $0$, with $a-a_0\in C^{s}S_{1,1}^{-s}(M)$. Assume that  $a_0$, and thus $a$, is eventually positive on the radial source $L$.
 Then, there exists $b\in S^0(M)$ homogeneous of degree $0$ for large 
$|\xi|$ such that $H_{p_1}b+a>0$ on $L$. 
\end{lemm}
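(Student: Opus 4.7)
The plan is to construct $b$ explicitly as an averaged integral along the Hamilton flow, after first replacing $a_0$ by a smooth symbol for regularity reasons. First, observe that it suffices to show $H_{p_1}b+a_0>0$ on $L$: since $L\subset\pl\bbar{T}^*M$ and $a-a_0\in C^sS^{-s}_{1,1}(M)$ decays at infinity in $\xi$, one has $a|_L=a_0|_L$. By the eventual positivity of $a_0$ on $L$, fix $T>0$ so that $\int_0^T a_0\circ\Phi_t\d t>0$ on $L$; by continuity there exist $c>0$ and an open conic neighborhood $U$ of $L$ in $\bbar{T}^*M$ with $\frac{1}{T}\int_0^T a_0\circ\Phi_t\d t\geq c$ on $U$.

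The natural guess is
\[b_0(x,\xi):=\frac{1}{T}\int_0^T (T-t)\,a_0\circ\Phi_t(x,\xi)\d t,\]
for which an integration by parts using $H_{p_1}(f\circ\Phi_t)=\tfrac{d}{dt}(f\circ\Phi_t)$ gives $H_{p_1}b_0=-a_0+\frac{1}{T}\int_0^T a_0\circ\Phi_u\d u$, so that $H_{p_1}b_0+a_0\geq c$ on $U$. The difficulty is that $a_0\in C^sS^0(M)$ with $s\in(0,1)$ while $\Phi_t$ is only $C^{r-1}$ with $r\in(1,2)$, so $b_0$ is merely $C^{\min(s,r-1)}$ and need not belong to $S^0(M)$. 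To remedy this, we replace $a_0$ by a smooth approximant: restricting to the unit cosphere bundle $S^*M$ and extending back homogeneously of degree $0$, standard mollification in charts produces $\tilde a_0\in S^0(M)$, homogeneous of degree $0$ for $|\xi|$ large, with $\|\tilde a_0-a_0\|_{L^\infty(\bbar{T}^*M)}<\eta$ for any prescribed $\eta>0$. For $\eta<c/4$ and using the invariance $\Phi_u(L)=L$, we obtain $\frac{1}{T}\int_0^T \tilde a_0\circ\Phi_u\d u\geq 3c/4$ on $L$. Defining $\tilde b_0$ by the same integral formula as $b_0$ but with $\tilde a_0$ in place of $a_0$, the smoothness of $\tilde a_0$ together with $\Phi_t\in C^{r-1}$ and $\pl_\xi^\beta\Phi_t\in C^{r-1}$ for all $\beta$ ensure that $\tilde b_0\in C^{r-1}(T^*M)$, is smooth in $\xi$ in the sense of Lemma~\ref{approximationlemma}, is homogeneous of degree $0$ for large $|\xi|$, and satisfies $H_{p_1}\tilde b_0\in C^{r-1}$.

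The endgame is to invoke Lemma~\ref{approximationlemma}: for any $\eps>0$ it produces $b\in S^0(M)$, homogeneous of degree $0$ for large $|\xi|$, with $\|b-\tilde b_0\|_{L^\infty}<\eps$ and $\|H_{p_1}(b-\tilde b_0)\|_{L^\infty}<\eps$. Then
\[H_{p_1}b+a_0=\frac{1}{T}\int_0^T\tilde a_0\circ\Phi_u\d u+(a_0-\tilde a_0)+H_{p_1}(b-\tilde b_0),\]
and choosing $\eps<c/4$ after $\eta<c/4$ makes the right-hand side at least $c/4>0$ on $L$, which yields $H_{p_1}b+a>0$ on $L$ as required. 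The main obstacle is precisely the regularity mismatch between $a_0\in C^s$ and the $C^{r-1}$ input required by Lemma~\ref{approximationlemma}, which is what forces the intermediate smoothing step; once this is in place, the integration by parts and the positivity bookkeeping are routine.
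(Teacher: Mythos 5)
Your proposal is correct and takes essentially the same route as the paper: smooth $a_0$ to a symbol in $S^0(M)$, define the auxiliary escape symbol by the time-averaged integral $\frac{1}{T}\int_0^T(T-t)\,(\text{smooth approximant})\circ\Phi_t\,\d t$ which lands only in $C^{r-1}S^0(M)$ due to the finite regularity of $\Phi_t$, and then invoke Lemma~\ref{approximationlemma} to replace it by a true $S^0(M)$ symbol while controlling both the $L^\infty$ error and the $L^\infty$ error of its $H_{p_1}$-derivative. Your version is if anything slightly more careful than the printed one, in that it makes explicit the reduction $a|_L=a_0|_L$ and replaces the paper's pointwise smallness condition on $\eps$ by a uniform constant $c>0$ taken over $L$.
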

\begin{proof} 
We can take a smooth $a_0'\in S^0(M)$ 
so that $\|a_0'-a_0\|_{C^0}<\eps$ for $\eps>0$ small.
Let us then define in the region $\{|\xi|>1\}$
\begin{equation}\label{defofb}
b_0:=\frac1T\int_0^T(T-t) a_0'\circ\Phi_t\d t\in C^{r-1}S^0(M)
\end{equation} 
which is homogeneous of degree $0$, and we have $H_{p_1}b_0+a_0'=\frac{1}{T}\int_0^Ta'_0\circ \Phi_t \d t \in C^{r-1}S^0(M)$ which is well-defined in $\{|\xi|>1\}$. Moreover
one has on $L$
\[a_0+H_{p_1}b_0>a_0'+H_{p_1}b_0-\eps>\frac{1}{T}\int_0^Ta_0\circ \Phi_t \d t -2\eps>\eps
\] 
if $\eps<\frac{1}{3T}\int_0^Ta_0\circ \Phi_t \d t$. Using Lemma \ref{approximationlemma}, there is $b\in S^0(M)$ so that $\|H_{p_1}b-H_{p_1}b_0\|_{L^\infty}<\eps$, which ends the proof.
\end{proof}

\begin{prop}\label{sourceestimate}
    Let $P={\rm Op}(p)\in \prescript{r}{}{\rm Diff}^1(M)$ with~$r>1$ and assume that $p_1:=\sigma(P)$ is real valued. Let $\Phi_t=e^{tH_{p_1}}: T^*M\rightarrow T^*M$
be the flow of the Hamilton vector field of $p_1$  and assume $L\subset \pl\bbar{T}^*M$ 
   is a radial source for~$p_1$.
 Let $P^\sharp:=T_p\in \prescript{r}{}{\tilde \Psi}_{1,1}^1(M)$ be its paradifferential operator. Let $s_0\in\R$ satisfying the following \emph{threshold condition}: 
   \begin{equation}\label{symb<0}
   \sigma(\Im P)+ s_0\frac{H_{p_1}\ls\xi\rs}{\ls\xi\rs} \textrm{ is eventually negative on }L.
   \end{equation}
   If $s\geq s_0$, then for any~$B_1\in \Psi^0(M)$ such that~$L\subset\mathrm{ell}(B_1)$, there exists~$A\in \Psi^0(M)$ with~$L\subset\mathrm{ell}(A)$, such that for all~$\eps>0$ there is $C>0$ such that for all $u\in H^{s-(r-1)+\eps}(M)$ with $B_1u\in H^{s_0}(M)$, $B_1P^\sharp u\in H^s(M)$, then $Au\in H^s(M)$ and 
   \begin{equation}\label{radialpointest1}
   \| Au\|_{H^s(M)}\leq C\| B_1P^\sharp u\|_{H^s(M)}+C\| u\|_{H^{s-(r-1)+\eps}(M)}.
   \end{equation}
\end{prop}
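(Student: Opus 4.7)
The approach is to run the positive-commutator proof of the radial source estimate~\cite[Theorem E.54]{DZ19}, substituting the paradifferential calculus developed above for the classical one and using Lemma~\ref{approximationb} in place of a smooth escape function. I would first decompose $P^\sharp = \Re P^\sharp + i\,\Im P^\sharp$, with $\Re P^\sharp$ self-adjoint in $\prescript{r}{}{\tilde\Psi}^1_{1,1}(M)$; since $p_1$ is real, Proposition~\ref{composePara} gives $\Im P^\sharp\in\prescript{r-1}{}{\tilde\Psi}^0_{1,1}(M)$ with principal symbol $\sigma(\Im P)$.

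Since $L$ is a source, the radial growth condition $|\Phi_t(x,\xi)| \geq Ce^{\nu|t|}|\xi|$ for $t \leq 0$ forces $H_{p_1}\ls\xi\rs/\ls\xi\rs \leq -\nu$ on $L$. Combined with the threshold hypothesis~\eqref{symb<0}, this implies that for any $s \geq s_0$ the symbol
\[ a_0 := -\sigma(\Im P) - s\,\frac{H_{p_1}\ls\xi\rs}{\ls\xi\rs} \]
is eventually positive on $L$. Applying Lemma~\ref{approximationb} produces $b \in S^0(M)$, homogeneous of degree $0$ for large $|\xi|$, with $H_{p_1}b + a_0 > \delta$ on a small conic neighborhood $U$ of $L$ for some $\delta>0$. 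I then take a defining function $\rho\in S^0(M)$ for $L$ on $\partial\bbar T^*M$ satisfying $H_{p_1}\rho\geq 0$ near $L$ (possible because the source is forward-repelling) and a non-increasing cutoff $\tilde\chi:\R\to[0,1]$ equal to $1$ near $0$; setting $\chi := \tilde\chi(\rho/\eta)\in S^0(M)$ for $\eta>0$ small yields $\chi\,H_{p_1}\chi\leq 0$, $L\subset\{\chi\equiv 1\}$, and $\supp\chi\subset U\cap\mathrm{ell}(B_1)$. The commutant is $G := \Op(\chi\,\ls\xi\rs^s e^{-b}) \in \Psi^s(M)$, and one picks $A\in\Psi^0(M)$ elliptic on $L$ with $\WF(A)\subset\{\chi\equiv 1\}$.

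The key identity is the pairing
\[ \Im\ls P^\sharp u,\,G^*Gu\rs = \ls Zu,u\rs + \Re\ls\Im(P^\sharp)u,\,G^*Gu\rs,\qquad Z := \tfrac{i}{2}[\Re P^\sharp,G^*G], \]
and a standard symbol calculation based on Propositions~\ref{composition_mfd} and~\ref{composePara} shows that the principal symbol of the associated symmetric bilinear form equals, modulo $\tilde S^{2s-(r-1)}_{1,1}$,
\[ -\chi^2\ls\xi\rs^{2s} e^{-2b}(H_{p_1}b + a_0) + \chi\,(H_{p_1}\chi)\ls\xi\rs^{2s}e^{-2b},\]
which by construction is bounded above by $-\delta\chi^2\ls\xi\rs^{2s}e^{-2b}$ near $L$ and nonpositive on all of $\supp\chi$. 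Applying the sharp G\aa rding inequality (Proposition~\ref{Garding}, with loss $(r-1)/4$) together with Cauchy-Schwarz on the left-hand side, and using~\eqref{prserveWF} to localize the error operators inside $\mathrm{ell}(B_1)$, produces for any $\eps>0$ and some $\eps_0\in(0,(r-1)/2)$ the preliminary estimate
\[ \delta\|Gu\|_{L^2}^2 \leq C\|B_1 P^\sharp u\|_{H^s}^2 + C\|B_1 u\|_{H^{s-\eps_0}}^2 + C\|u\|_{H^{s-(r-1)+\eps}}^2. \]
Elliptic estimates turn $\|Gu\|_{L^2}$ into $\|Au\|_{H^s}$. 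Iterating the bound (decreasing the middle exponent by $\eps_0$ per step and stopping at the threshold $s_0$, where the hypothesis $B_1u \in H^{s_0}$ takes over) then eliminates the middle term, and a final $X_\eps,Y_\eps$ regularization as in~\eqref{Peps} extends the estimate to $u \in H^{s-(r-1)+\eps}(M)$.

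The main obstacle is that every paradifferential commutator or adjoint costs $r-1$ Sobolev derivatives rather than one, so the bootstrap has to terminate at the a priori regularity level $s_0$ before those losses accumulate; this is exactly what the $B_1u\in H^{s_0}$ hypothesis delivers. A second subtlety, absent in the classical self-adjoint setting, is that the $G^*\Im(P^\sharp)G$ contribution is not negligible and must be absorbed in parallel with the weight-derivative term $s\,g^2 H_{p_1}\log\ls\xi\rs$ arising from the commutator: these two combine into exactly the quantity $-a_0$ appearing in~\eqref{symb<0}, which is precisely the combination for which Lemma~\ref{approximationb} supplies an escape function of the requisite paradifferential regularity.
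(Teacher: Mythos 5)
Your outline follows the positive-commutator scheme of \cite[Theorem E.52]{DZ19}, which is indeed what the paper does. The decomposition of the pairing, the appeal to Lemma~\ref{approximationb} for the $b$ piece of the escape weight, the sharp G\aa rding step with the $(r-1)/4$ loss, and the final $X_\eps,Y_\eps$ regularization all match. But there is a genuine gap at the construction of the cutoff $\chi$.

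You posit a defining function $\rho\in S^0(M)$ for $L$ with $H_{p_1}\rho\geq 0$ near $L$, so that $\chi=\tilde\chi(\rho/\eta)$ satisfies $\chi\,H_{p_1}\chi\leq 0$ exactly, and you justify this by "the source is forward-repelling". That argument is only available when both the Hamilton field $H_{p_1}$ and the invariant set $L$ are smooth. In the present setting $p_1=\xi(X)$ with $X\in C^r$, $r>1$, so $H_{p_1}$ (and its flow on $\pl\bbar T^*M$) is only $C^{r-1}$; and in the intended applications $L=E_s^*\cap\pl\bbar T^*M$ is only H\"older. Thus the standard flow-averaging construction of $\rho$ produces a function that is at best $C^{r-1}$, not a smooth symbol in $S^0(M)$, and one cannot insert it into the quantization. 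This is precisely the point where the paper's proof diverges: it builds the rough escape function $\chi_0$ by averaging (so $H_{p_1}\chi_0\leq 0$, $\chi_0\in C^{r-1}$) and then invokes Lemma~\ref{approximationlemma} to replace it with a smooth $\chi_\eps\in S^0(M)$ satisfying only the relaxed bound $H_{p_1}\chi_\eps\leq\eps$, with $\supp(H_{p_1}\chi_\eps)$ staying away from $L$.

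This approximation matters downstream. Because $H_{p_1}\chi_\eps$ is only $\leq\eps$ rather than $\leq 0$, the pointwise commutator bound picks up a positive error $\eps\,\tilde\chi^2$ supported away from $L$, and the resulting G\aa rding estimate has an extra term $C\eps\|\Op(\tilde\chi)u\|_{H^s}^2$ on the right-hand side. Your outline has no such term and hence no mechanism to handle it. The paper then runs Proposition~\ref{propofsing} (propagation of singularities) backward along the flow from $\supp\tilde\chi$ into $\mathrm{ell}(A)$ to convert $\|\Op(\tilde\chi)u\|_{H^s}$ into $C\|Au\|_{H^s}$ plus controlled terms, and finally absorbs it by choosing $\eps\ll\delta$. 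Without the relaxed construction of $\chi$ and this extra propagation step, the argument does not close when $r<\infty$, which is the whole point of the proposition. So you should replace the smooth-$\rho$ claim with the Lemma~\ref{approximationlemma} approximation, carry the $\eps\tilde\chi^2$ error, and insert the propagation step before the iteration in the last paragraph.
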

\begin{proof}
Now that we have all the paradifferential calculus properties at hand, the proof is essentially the same as \cite[Theorem E.52]{DZ19}. We first prove \eqref{radialpointest1} for $u\in C^\infty(M)$. 
Using that $L$ is a source, observe that, if \eqref{symb<0} is true, then  it is also true by replacing $s_0$ by any $s\geq s_0$. Notice that $\Im P^\sharp\in \prescript{r-1}{}{\tilde \Psi}^{0}_{1,1}(M)$ using Proposition \ref{composePara}, and its principal symbol 
$\sigma(\Im P^\sharp)$ can be obtained using Proposition \ref{expansionsymbol} and the expression \eqref{formulapsharp} for $p^\sharp$: we have
\[\sigma(\Im P^\sharp)-\sigma(\Im P)\in C^{r-1}S_{1,1}^{-(r-1)}(M).\]
Thus by Lemma \ref{approximationb} and \eqref{symb<0}, there is $b\in S^{0}(M)$ homogeneous of degree $0$ for large $|\xi|$ so that $\sigma(\Im P^\sharp)+s\frac{H_{p_1}\ls\xi\rs}{\ls\xi\rs}-H_{p_1}b<0$.
Take $U\subset \bbar{T}^*M$ a neighborhood of $L$  such that
\[\exists \delta>0 \textrm{ small }, \quad \sigma(\Im P)+s\frac{H_{p_1}\ls\xi\rs}{\ls\xi\rs}-H_{p_1}b<-2\delta \quad  \textrm{ in }U.
\]
Then, as in \cite[Lemma E.53]{DZ19}, we claim that for each $\eps>0$ small, there is $\chi\in C_c^\infty(U;\R^+)$ homogeneous of degree $0$ near $\pl\bbar{T}^*M$ so that $\chi>0$ on $L$ and $H_{p_1}\chi \leq \eps$ and $\supp(H_{p_1}\chi)\cap L=\emptyset$. To obtain this function, we shrink $U$ so that ${\rm dist}(\kappa(\Phi_t(x,\xi)), L)\to 0$ as $t\to -\infty$ uniformly, let $\psi\in C_c^\infty(U,[0,1])$ equal to $1$ near $L$ and homogeneous of degree $0$ for $|\xi|$ large, then set 
$\chi_0:=T^{-1}\int_T^{2T} \psi\circ \Phi_{-t} \d t$ where $T>0$ is large enough so that $\Phi_{-t}({\rm supp}(\psi))\subset \{\psi=1\}$ for all $t\geq T$. One has $\chi_0\geq 0$, $\chi_0=1$ near $L$ (thus $H_{p_1}\chi_0=0$ near $L$), $\chi_0$ homogeneous of degree $0$ for large $|\xi|$,
$H_{p_1}\chi_0\leq 0$ and $\chi_0>0$ on $L$, but $\chi_0$ is only $C^{r-1}$ in the $x$-variable.
Using Lemma \ref{approximationlemma}, we find, for any $\eps>0$ small, a function $\chi_\eps\in S^0(M)$ non-negative so that $\chi_\eps =1$ in an $\eps$-independent neighborhood of $L$, and as $\eps>0$ goes to $0$, $\|\chi_\eps-\chi_0\|_{C^0}\leq \eps$ and $\|H_{p_1}(\chi_\eps-\chi_0)\|_{L^\infty}\leq \eps$. Thus $H_{p_1}(\chi_\eps)\leq \eps$ and $\supp(H_{p_1}\chi_\eps)$ is contained in a fixed compact set of $\bbar{T}^*M$ (uniform in $\eps>0$) not intersecting $L$ .

We can then define $g_\eps:=e^{-b}\chi_\eps \in C_c^\infty(U)\cap S^0(M)$ homogeneous of degree $0$ for $|\xi|$ large, and remark that 
\begin{equation}\label{Hpgdelta} 
g_\eps H_{p_1}g_\eps+\sigma(\Im P)g_\eps^2 +s\frac{H_{p_1}\cjg \xi\cjd}{\cjg \xi\cjd}g^2_\eps \leq -\delta g^2_\eps + e^{-2b}\chi_\eps H_{p_1}(\chi_\eps) \leq -\delta g_\eps^2+\eps \tilde{\chi}^2
\end{equation}
where $\tilde{\chi}\in C_c^\infty(U,\rr^+)$ is supported far from $L$ and independent of $\eps>0$.
Then we claim that if $A,B_2\in \Psi^0(M)$ are such that $g>0$ on ${\rm WF}(A)$, $B_2$ equal to $1$ microlocally near $L$ and ${\rm supp}(g)\cap \WF(1-B_2)=\emptyset$, for all $\eps_0>0$ small there is $C>0$ such that for all $u\in C^\infty(M)$, if $f:=P^\sharp u$,
\begin{equation}\label{1stboundtoprove}
\|Au\|_{H^s(M)}\leq C (\|B_2f\|_{H^s(M)}+\|B_2u\|_{H^{s-\eps_0}(M)}+\|u\|_{H^{s-(r-1)+\eps_0}(M)})
\end{equation}
To prove this, we proceed as in the proof of Proposition \ref{propofsing}. Let $G_\eps={\rm Op}(\cjg \xi\cjd^{s}g_\eps)\in \Psi^{2s}(M)$. Then $\cjg f,G_\eps^*G_\eps u\cjd  = \Re \cjg Zu,u\cjd$ with $Z:=\frac{i}{2}[{\Re P^\sharp},G_\eps^*G_\eps]+G_\eps^*G_\eps\Im P^\sharp \in \prescript{r-1}{}{\tilde \Psi}^{2s}_{1,1}(M)$ and $Z$ has principal symbol (using $r>1$ and Proposition \ref{composition_mfd})
\[ \sigma(Z)=\cjg \xi\cjd^{2s}\Big(g_\eps H_{p_1}g_\eps + g_\eps ^2\Big(s\frac{H_{p_1}\cjg \xi\cjd}{\cjg \xi\cjd} + \sigma(\Im P^\sharp)\Big)\Big)+o(|\xi|^{2s}) \textrm{ as }|\xi|\to \infty.\]
 By \eqref{Hpgdelta}, $\sigma(Z+\delta G_\eps^*G_\eps -\eps {\rm Op}(\tilde{\chi} \cjg \xi\cjd^{s})^*{\rm Op}(\tilde{\chi}\cjg \xi\cjd^{s}))\leq 0$, thus by Proposition \ref{Garding} (as in the proof of Proposition \ref{propofsing}), there is $C>0$ (independent of $\eps$) and $C_\eps>0$ depending on $\eps>0$ so that for each $u\in C^\infty$
\[ \Re \cjg Zu,u\cjd \leq -\delta\|G_\eps u\|^2_{L^2}+C\eps\|{\rm Op}(\tilde{\chi})u\|^2_{H^s}+C_\eps \|B_2u\|^2_{H^{s-\eps_0}}+C_\eps \|u\|^2_{H^{s-(r-1)+\eps_0}}.\]
Then as in the proof of Proposition \ref{propofsing} we get by Cauchy-Schwartz
\[ 
\delta \|G_\eps u\|_{L^2}^2 \leq C_\eps \|B_2f\|^2_{H^s}+C\eps\|{\rm Op}(\tilde{\chi})u\|^2_{H^s}+C_\eps\|B_2u\|^2_{H^{s-\eps_0}}+C_\eps \|u\|^2_{H^{s-(r-1)+\eps_0}}
\]
for some $C>0$ uniform in $\eps>0$. Since $g_\eps=\cjg \xi\cjd^s e^{-b}$ in an $\eps$-independent neighborhood  of $L$, we can find $A={\rm Op}(a)\in \Psi^{s}(M)$ independent of $\eps>0$ with $a\in C_c^\infty(U)$ 
such that $g_\eps\geq \cjg \xi\cjd^s a\geq c_0\cjg \xi\cjd^{s}$ near $L$ for some $c_0>0$. 
This implies, by sharp G\aa rding applied to $G_\eps^* G_\eps-{\rm Op}(\cjg \xi\cjd^s a)^*{\rm Op}(\cjg \xi\cjd^s a)$, that there is $C>0$ independent of $\eps>0$ and $C_\eps>0$ depending on $\eps>0$ such that
$\|G_\eps u\|_{L^2}^2\geq C^{-1}\|Au\|^2_{H^s}-C_\eps \|B_2u\|_{H^{s-\eps_0}}^2-C_\eps\|u\|^2_{H^{s-(r-1)+\eps_0}}$. Therefore we obtain
\begin{equation}\label{firststopradial}
\delta \|A u\|_{H^s}^2 \leq C_\eps \|B_2f\|^2_{H^s}+C\eps\|{\rm Op}(\tilde{\chi})u\|^2_{H^s}+C_\eps\|B_2u\|^2_{H^{s-\eps_0}}+C_\eps \|u\|^2_{H^{s-(r-1)+\eps_0}}.
\end{equation}

Since $\supp(\tilde{\chi})\cap L=\emptyset$ and $L$ is a source, there is a uniform time $T'$ so that for each $(x,\xi)\in \supp(\tilde{\chi})$, $\cup_{t\in [0,T]}\Phi_{-t}(x,\xi) \cap \WF(A)\not=\emptyset$. We can then apply Proposition \ref{propofsing} with $(A,B)$ replaced by $({\rm Op}(\tilde{\chi}),A)$: this gives for some $C>0$ uniform in $\eps>0$
\[\|{\rm Op}(\tilde{\chi})u\|^2_{H^s}\leq C\|Au\|^2_{H^s}+ C\|B_1f\|^2_{H^s}+C\|u\|^2_{H^{s-(r-1)+\eps_0}}.\] 
Combining with \eqref{firststopradial}, this leads, by fixing $\eps\ll \delta$, to the bound
\eqref{1stboundtoprove}. 
Then, the argument of step 2 in \cite[Theorem E.52]{DZ19}, based on applying Proposition \ref{propofsing} to remove the $\|B_2u\|^2_{H^{s-\eps_0}}$ term, can be applied verbatim in our case
and yields the estimate \eqref{radialpointest1} for all $u\in C^\infty(M)$. Using the regularization argument \cite[Lemma E.45]{DZ19}, \eqref{radialpointest1} also holds for $u\in H^{s}(M)$ so that $P^\sharp u\in H^s(M)$.

Finally, to obtain the result for $u\in H^{s-(r-1)+\eps_0}(M)$ with  $B_1u\in H^{s_0}(M)$, $B_1f\in H^s(M)$
set $G={\rm Op}(\cjg \xi\cjd^{s})$, we apply the same argument as in \cite[Section E.7, Exercise 35]{DZ19}. First we regularize using $X_\epsilon=\mathrm{Op}(\ls\epsilon\xi\rs^{-(s-s_0)})$ for $\eps>0$ small
and its approximate inverse~$Y_\epsilon$ satisfying $X_\eps Y_\eps=1+\mc{O}_{\Psi^{-\infty}(M)}(\eps^\infty)$; as operator in $\Psi^0(M)$ (resp. $\Psi^{(s-s_0)}(M)$), $X_\eps$ (resp. $Y_\eps$) is bounded uniformly in~$\epsilon$. We have for all $\eps>0$ small (as \eqref{Peps})
\[ P_\epsilon^\sharp:=X_\epsilon P^\sharp Y_\epsilon= P^\sharp +i\mathrm{Op}\lp\ls\epsilon\xi\rs^{s-s_0}H_{p_1}(\ls\epsilon\xi\rs^{-(s-s_0)})\rp+\mc{O}_{\tilde\Psi_{1,1}^{-(r-1)}}(1).\]
The symbol 
\begin{equation}\label{subprincipalterm}
\ls\epsilon\xi\rs^{(s-s_0)}H_{\sigma(P^\sharp)}(\ls\epsilon\xi\rs^{-(s-s_0)})=-(s-s_0)\frac{\eps^2 \cjg \xi\cjd^2}{\cjg \eps \xi\cjd^2}\frac{H_{\sigma(P^\sharp)}\cjg \xi \cjd}{\cjg \xi\cjd}
\end{equation}
is uniformly bounded in~ $\prescript{r-1}{}{\tilde S}^{0}_{1,1}(M)$ in $\eps$, and ~$u\in H^{s_0}(M)$ belongs to $H^s(M)$ if~$X_\epsilon u$ is uniformly bounded in~$H^s(M)$.  It suffices to show the estimate
\begin{equation}\label{esttoprove}
\lA X_\epsilon Au\rA_{H^s(M)}\leq C\lA X_\epsilon B_1f\rA_{H^{s}(M)}+C\lA u\rA_{H^{s-(r-1)+\eps_0}(M)},
\end{equation}
which is a consequence of proving the radial estimates for~$P^\sharp_\epsilon$ and~$u\in C^\infty(M)$ uniformly in $\eps>0$. Using \eqref{subprincipalterm}, one sees that~$P^\sharp_\epsilon$ satisfies the threshold condition uniformly in $\eps>0$, i.e. $\int_0^T(\sigma(\Im P^\sharp_\eps)+ s\frac{H_{p_1}\ls\xi\rs}{\ls\xi\rs})\circ \phi_t \d t<0$ for some $T>0$ independent of $\eps>0$. The proof of \eqref{radialpointest1} then applies for $P_\eps^\sharp$ replacing $P^\sharp$ and $X_\eps u$ replacing $u$, if with $u\in H^{s-(r-1)+\eps_0}(M)$: 
\[\lA  A X_\epsilon u\rA_{H^s(M)}\leq C\lA B_1X_\eps f\rA_{H^{s}(M)}+C\lA X_\eps u\rA_{H^{s-(r-1)+\eps_0}(M)}+o(1)\|u\|_{H^{s-(r-1)+\eps_0}}\]
 as $\eps\to 0$. 
Then one uses that, if $Q={\rm Op}(q)\in \Psi^0(M)$ and  $\tilde{Q}\in \Psi^0(M)$ elliptic on ${\rm WF}(Q)$, 
\[\begin{split}
QX_\eps=& X_\eps Q -[i{\rm Op}(\ls\epsilon\xi\rs^{s-s_0}\{ q,\ls\epsilon\xi\rs^{-(s-s_0)}\})+\mc{O}_{\Psi^{-2}(M)}(1)]X_\eps\tilde{Q}+\mc{O}_{\Psi^{-\infty}(M)}(1)\\
=& X_\eps Q+\mc{O}_{\Psi^{-1}(M)}(1)X_\eps\tilde{Q}+\mc{O}_{\Psi^{-\infty}(M)}(1)
\end{split}\]
so that for all $u\in H^{s-(r-1)+\eps_0}(M)$, $|\|QX_\eps u\|_{H^s(M)}-\|X_\eps Qu\|_{H^s(M)}|\leq C\|X_\eps \tilde{Q}u\|_{H^{s-1}(M)}+C\|u\|_{H^{s-(r-1)+\eps_0}}$ for some uniform $C>0$. Applying this to $Q=A$ and $Q=B_1$,  we get for each $B_1'\in \Psi^0(M)$ elliptic near $B_1$
\[\lA  X_\epsilon Au\rA_{H^s(M)}\leq C\lA X_\eps B_1'f\rA_{H^{s}(M)}+C\|X_\eps B_1u\|_{H^{s-1}(M)}
+C\lA u\rA_{H^{s-(r-1)+\eps_0}(M)}\]
Then, we apply again the argument of step 2 in \cite[Theorem E.52]{DZ19} (based on applying Proposition \ref{propofsing}) to remove the $\|X_\eps B_1u\|_{H^{s-1}}$ term, and we finally reach the estimate \eqref{esttoprove} with $B_1'$ instead of $B_1$.
\end{proof}

\begin{rem} Here again, the result also holds for operators acting on bundles, provided the principal symbol is of the form ${\sigma}({\rm Re}\, P)(x,\xi)=p_1(x,\xi)\otimes {\rm Id}$
\end{rem}

In the same way, we can prove the following:
\begin{prop}\label{sinkestimate}
     Let $P={\rm Op}(p)\in \prescript{r}{}{\rm Diff}^1(M)$ with~$r>1$ and assume that $p_1:=\sigma(P)$ is real-valued. Let $\Phi_t=e^{tH_{p_1}}: T^*M\rightarrow T^*M$ be the flow of the Hamilton vector field of $p_1$  and assume $L\subset \pl\bbar{T}^*M$ 
   is a radial sink for~$p_1$.
 Let $P^\sharp:=T_p\in \prescript{r}{}{\tilde \Psi}_{1,1}^1(M)$ be its paradifferential operator. 
 Assume that there exists ~$s_0\in\R$ satisfying the following \emph{threshold condition}:
\[\sigma(\Im P)+s_0\frac{H_{p_1}\ls\xi\rs}{\ls\xi\rs} \textrm{
is eventually negative on }L.\]
Then for any~$B_1\in \Psi^0(M)$ such that~$L\subset\mathrm{ell}(B_1)$, there exists~$A,B\in \Psi^{0}(M)$ such that~$L\subset\mathrm{ell}(A)$, $\mathrm{WF}(B)\subset\mathrm{ell}(B_1)\setminus L$, such that for all~$\eps_0>0$, there is $C>0$ so that for all $u\in H^{s-(r-1)+\eps_0}(M)$ such that $Bu\in H^s(M)$ and  $B_1P^\sharp u\in H^{s}(M)$, then $Au\in H^s(M)$ with
   \[\lA Au\rA_{H^s(M)}\leq C\|Bu\|_{H^s(M)}+C\lA B_1P^\sharp u\rA_{H^s(M)}+C\lA u\rA_{H^{s-(r-1)+\eps_0}(M)}.\]
\end{prop}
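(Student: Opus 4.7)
The plan is to adapt the proof of Proposition \ref{sourceestimate} to the sink case, following the pattern of \cite[Theorem E.54]{DZ19}. Since $L$ is a radial sink for $p_1$, the Hamilton flow of $p_1$ enters $L$ in forward time; in particular $H_{p_1}\ls\xi\rs > 0$ on $L$, so the threshold condition imposes an upper bound on $s_0$ in terms of $\sigma(\Im P)$ rather than a lower bound. The crucial consequence is that we cannot propagate $H^s$-regularity into $L$ using Proposition \ref{propofsing}: the flow goes the wrong way. This forces us to postulate a priori control on $u$ microlocally on a shell around $L$, which is exactly what the hypothesis $Bu\in H^s$ and the term $\lA Bu\rA_{H^s}$ in the conclusion encode.

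The positive commutator is built just as in the source case, but with the cutoff time-reversed. Applying Lemma \ref{approximationb} to the threshold condition, I first find $b\in S^0(M)$ homogeneous of degree $0$ and $\delta>0$ such that
\[ \sigma(\Im P) + s_0\frac{H_{p_1}\ls\xi\rs}{\ls\xi\rs} - H_{p_1}b < -2\delta \quad \textrm{on a neighborhood } U \subset \mathrm{ell}(B_1) \textrm{ of } L.\]
Next I build $\chi_\eps\in S^0(M)$ homogeneous of degree $0$ near $\pl\bbar T^*M$, supported in $U$, equal to $1$ in an $\eps$-independent neighborhood of $L$, with $H_{p_1}\chi_\eps \leq \eps$ and $\supp(H_{p_1}\chi_\eps)\subset K$ for some compact $K\subset U\setminus L$ independent of $\eps$. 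The construction mirrors Lemma \ref{approximationlemma} applied to $\chi_0 := T^{-1}\int_T^{2T}\psi\circ\Phi_t\,dt$ (with the \emph{forward} average, opposite to the source case), where $\psi\in C_c^\infty(U;[0,1])$ equals $1$ near $L$ and $T$ is large enough that $\Phi_t(\supp\psi)\subset \{\psi=1\}$ for $t\geq T$; one checks that $\chi_0\geq 0$, $H_{p_1}\chi_0 \leq 0$, and $\chi_0=1$ near $L$.

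With $g_\eps := e^{-b}\chi_\eps$ and $G_\eps := \Op(\ls\xi\rs^s g_\eps)\in \Psi^s(M)$, the positive commutator calculation with $2\Im\ls P^\sharp u, G_\eps^* G_\eps u\rs$ proceeds as in the proof of Proposition \ref{sourceestimate}, and the principal symbol bound now reads
\[ g_\eps H_{p_1}g_\eps + g_\eps^2\Big(s\frac{H_{p_1}\ls\xi\rs}{\ls\xi\rs} + \sigma(\Im P^\sharp)\Big) \leq -\delta g_\eps^2 + e^{-2b}\chi_\eps H_{p_1}\chi_\eps,\]
where the remainder term $\chi_\eps H_{p_1}\chi_\eps \leq 0$ is supported in $K\subset \mathrm{ell}(B_1)\setminus L$. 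Picking $B\in\Psi^0(M)$ elliptic on $K$ with $\WF(B)\subset \mathrm{ell}(B_1)\setminus L$, the sharp G\aa rding inequality (Proposition \ref{Garding}) and Cauchy--Schwarz yield
\[\lA Au\rA_{H^s}^2 \leq C\lA B_1 P^\sharp u\rA_{H^s}^2 + C\lA Bu\rA_{H^s}^2 + C_\eps\lA B_1 u\rA_{H^{s-\eps_0}}^2 + C_\eps \lA u\rA_{H^{s-(r-1)+\eps_0}}^2\]
for $u\in C^\infty(M)$, with $A\in \Psi^s(M)$ elliptic on $L$ and microsupported where $g_\eps\geq c_0\ls\xi\rs^s$. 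The $\lA B_1 u\rA_{H^{s-\eps_0}}$ term is then iterated away as in step 2 of \cite[Theorem E.52]{DZ19}, and the regularization scheme at the end of the proof of Proposition \ref{sourceestimate} (using $X_\eps=\Op(\ls\eps\xi\rs^{-(s-s_0)})$) extends the estimate to all $u\in H^{s-(r-1)+\eps_0}(M)$ satisfying the stated hypotheses.

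The main subtlety, as compared to the source case, is that $\chi_\eps H_{p_1}\chi_\eps$ is not small in $\eps$: it is of size $\mathcal{O}(1)$ on $K$. In the source proof an analogous term was made arbitrarily small by choosing $\eps$ small and was then absorbed by iterating Proposition \ref{propofsing} along the flow that escapes from $L$; here the flow goes the wrong way and such iteration would propagate into $L$, where we have no a priori control. Consequently, this term must be absorbed directly into the $\lA Bu\rA_{H^s}$ term, which is possible because its sign is favorable and its support is uniformly separated from $L$ inside $\mathrm{ell}(B_1)$.
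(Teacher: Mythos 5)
The overall architecture of your proof is the right one and matches the paper's: build a positive commutator with a weight $g$ concentrated near the sink, run sharp G\aa rding, absorb the error supported on a shell $K$ away from $L$ into $\lA Bu\rA_{H^s}$, then iterate and regularize exactly as in the source case. However, there is a genuine sign error in your intermediate steps that, taken at face value, would make the $\lA Bu\rA_{H^s}$ term superfluous and thus contradict both the statement and your own final estimate.

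You claim that the forward-averaged cutoff $\chi_0 = T^{-1}\int_T^{2T}\psi\circ\Phi_t\,dt$ satisfies $H_{p_1}\chi_0\leq 0$. Compute directly: $H_{p_1}\chi_0 = T^{-1}\bigl(\psi\circ\Phi_{2T}-\psi\circ\Phi_T\bigr)$. Since $L$ is a sink, the \emph{forward} flow attracts to $L$, and with $T$ chosen so that $\Phi_t(\supp\psi)\subset\{\psi=1\}$ for $t\geq T$, one gets $\psi\circ\Phi_{2T}\geq\psi\circ\Phi_T$, hence $H_{p_1}\chi_0\geq 0$ --- the \emph{opposite} sign, and the unfavorable one for the positive commutator. (The backward average for a source gives $\leq 0$, which is why the paper's source construction works; simply flipping the time direction inverts the inequality.) Consequently your subsequent assertions that $\chi_\eps H_{p_1}\chi_\eps\leq 0$ and, in the last paragraph, that ``its sign is favorable'' are both false; and they are internally inconsistent with your insistence on keeping the $\lA Bu\rA_{H^s}$ term, which would be unnecessary if the error had a good sign.

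The paper's proof sidesteps this by not attempting any Lyapunov-type averaging for $\chi_1$ at all: $\chi_1$ is a generic smooth bump, $H_{p_1}\chi_1$ has no sign, but it is bounded and supported in $\supp\chi_1\cap\supp(1-\chi_2)$, where the auxiliary $\psi$ is chosen elliptic; then one simply dominates $H_{p_1}\chi_1\leq C_0\psi$ and moves the entire $C_0^2 B'^*B'$ contribution to the right-hand side before applying G\aa rding. Your proof would be repaired by abandoning the time-averaged construction, taking a plain smooth $\chi_1$ (so the smoothing Lemma \ref{approximationlemma} is not needed here), acknowledging that $\chi_1 H_{p_1}\chi_1$ is $\mathcal O(1)$ with no sign, and absorbing it via this domination --- the justification being solely the uniform separation of $\supp(H_{p_1}\chi_1)$ from $L$ inside $\mathrm{ell}(B_1)$, not any sign property.
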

\begin{proof} The proof follows closely the line of \cite[Theorem E.54]{DZ19}.
By Lemma \ref{approximationb} there is $b\in S^{0}(M)$ homogeneous of degree $0$ for large $|\xi|$ so that $\sigma(\Im P^\sharp)+s\frac{H_{p_1}\ls\xi\rs}{\ls\xi\rs}-H_{p_1}b<0$.
Take $U\subset \bbar{T}^*M$ a neighborhood of $L$  such that ${\rm ell}(B_1)\subset U$ and 
\[\exists \delta>0, \quad \sigma(\Im P)+s\frac{H_{p_1}\ls\xi\rs}{\ls\xi\rs}-H_{p_1}b<-2\delta \quad  \textrm{ in }U.
\]
Let $\chi_1,\chi_2\in C_c^\infty(U)$ such that $\chi_1=1$ on ${\rm supp}(\chi_2)$ and $\chi_2=1$ on $L$, and let $\psi\in C_c^\infty(U\setminus L)$ equal to $1$ on ${\rm supp}(\chi_1)\cap {\rm supp}(1-\chi_2)$. Let $g:=e^{-b}\chi_1$, $A={\rm Op}(\chi_2)$, $B={\rm Op}(\psi)$, $B_2=A+B$ and $G={\rm Op}(\cjg \xi\cjd^sg)$.  Then as in Proposition \ref{sourceestimate}, if $f:=P^\sharp u$ for $u\in C^\infty(M)$, one has $\cjg f,G^*Gu\cjd  = \Re \cjg Zu,u\cjd$ with $Z:=\frac{i}{2}[{\Re P^\sharp},G^*G]+G^*G\Im P^\sharp \in \prescript{r-1}{}{\tilde \Psi}_{1,1}^{2s}(M)$ and  
\[ 
\sigma(Z-C_0^2{B'}^*B'+\delta G^*G)=\cjg \xi\cjd^{2s}(g H_{p_1}g+\sigma(\Im P)g^2+s\frac{H_{p_1}\cjg \xi\cjd}{\cjg \xi\cjd}g^2)+o(|\xi|^{2s})\leq 0
\]
for $|\xi|$ large, where $B':={\rm Op}(\cjg \xi \cjd^s\psi)$ and $C_0\psi\geq H_p(\chi_1)$. Applying Proposition \ref{Garding} to $Z-C_0^2{B'}^*B'+\delta G^*G$, there is $C>0$ such that for all $u\in C^\infty(M)$
\[ \Re \cjg Zu,u\cjd \leq -\delta\|Gu\|^2_{L^2}+C\|Bu\|^2_{H^s}+C\|B_2u\|^2_{H^{s-\eps_0}}+C\|u\|^2_{H^{s-(r-1)+\eps_0}}.\]
Proceeding as in the proof of Proposition \ref{propofsing}, we obtain
\[ \begin{split}
\|Au\|_{H^s}\leq & C\|Bu\|_{H^s}+ C\|B_1f\|_{H^s}+C\|B_2u\|_{H^{s-\eps_0}}+C\|u\|_{H^{s-(r-1)+\eps_0}}\\
\leq & C\|Bu\|_{H^s}+ C\|B_1f\|_{H^s}+C\|Au\|_{H^{s-\eps_0}}+C\|u\|_{H^{s-(r-1)+\eps_0}}.
\end{split}\]
Then an interpolation estimate, exactly as in \cite[Theorem E.54]{DZ19}, allows to absorb 
the $\|Au\|_{H^{s-\eps_0}}$ term. To obtain the result for $u\in H^{s-(r-1)+\eps_0}(M)$ so that $Bu\in H^s(M)$ and $B_1f\in H^s(M)$, it suffices to apply the same regularization procedure explained in the proof of Proposition \ref{sourceestimate}. 
\end{proof}

\textbf{The case of operators acting on vector bundles.} 
Let us finally discuss briefly the case of an operator $P\in \prescript{r}{}{\rm Diff}^1(M;E)$ acting on a Hermitian vector bundle $(E,\cjg \cdot,\cdot\cjd_{E})$, with $P=-i\nabla_X+V$ where $V\in C^{r-1}(M;E\otimes E^*)$, $\nabla$ a smooth connection on $E$ and $X\in C^{r}(M;TM)$ a $C^r$ vector field. The principal symbol is 
$\sigma(P)=p_1(x,\xi)\otimes {\rm Id}$ where $p_1(x,\xi)=\xi(X(x))$ is real valued, scalar, and linear in $\xi$. In that case, let $\Phi_t$ be the Hamilton flow of $p_1(x,\xi)$ as before. The operator $P^*$ is defined using $\cjg \cdot,\cdot\cjd_{E}$ and a fixed smooth measure on $M$.
 Consider the 
pull-back bundle $\tilde{E}:=\pi^*E$ on $\bbar{T}^*M$ where $\pi:\bbar{T}^*M\to M$ is the projection on the base, and let $\tilde{\nabla}:=\pi^*\nabla$ be the pull-back connection. 
An element $a\in \tilde{E}\otimes \tilde{E}^*$ is said symmetric if for $v,v'\in \tilde{E}$, $\cjg a(v'),v\cjd_E=\cjg v',a(v)\cjd_E$, in that case $a$ can be identified with an element in the symmetric bundle $S^2\tilde{E}^*$. 

Then Proposition \ref{sourceestimate} and Proposition \ref{sinkestimate} also hold for such $P$, assuming that 
\[\sigma({\rm Im} P)+s_0\frac{H_{p_1}\cjg \xi\cjd}{\cjg \xi\cjd}{\rm Id}<0 \textrm{ on }L,\] 
when we view $\sigma({\rm Im} P)$ as a section of $S^2\tilde{E}^*$ using the Hermitian product $\cjg \cdot,\cdot\cjd_E$. Indeed, the proof of these proposition apply verbatim to this case (taking $b=0$ there), using that the sharp G\aa rding inequality also holds on bundles. 

Another case of interest is when $E=(\otimes^pTM) \otimes (\otimes^q T^*M)$ is the bundle of tensors. Then the same proof applies to operators of the form $i(\mc{L}_X+V)$ where $\mc{L}_X$ is the Lie derivative and $V$ a potential; indeed the principal symbol of $\mc{L}_X$ is still $p_1(x,\xi)=\xi(X)$.

\section{Regularity and rigidity of the stable/unstable foliation}

In this section, we will use the tools discussed in previous section to describe microlocal regularity and rigidity results on the stable/unstable foliation of Anosov flows. 

\subsection{Dimension 3} 
Let $M$ be a compact $3$-dimensional manifold and let $X$ be a smooth vector field 
generating an Anosov flow $\varphi_t$, with Anosov splitting $\R X\oplus E_u\oplus E_s$.
To make the arguments a bit more concrete, we will assume that $E_u,E_s$ are trivialisable\footnote{this assumption can easily be removed by working as in the higher dimensional case considered later, the important object being the projector $\pi_H=TM\to \mc{H}$ as in \eqref{piH} where $\mc{H}$ is a smooth one-dimensional bundle approximating $E_u$}. By \cite{HPS70}, the bundles  $E_u,E_s$ are $C^\gamma(M)$ for some $\gamma>0$. More precisely, (see \cite[Lemma 2.2]{FaureGuillarmou}) there exist vector 
fields $U_-,U_+\in C^\gamma(M;TM)$ so that $E_u=\rr U_-$ and $E_s=\rr U_+$. There are $C^\gamma(M)$ functions $r_\pm$ so that 
\begin{equation}\label{dphiU} 
\begin{gathered}
\, [X,U_-]=-r_- U_- ,\quad d\varphi_{-t}(x)U_-(x)=e^{-\int_{-t}^0 r_-(\varphi_s(x))ds}
U_-(\varphi_{-t}(x))\\
[X,U_+]=r_+ U_+ ,\quad d\varphi_{t}(x)U_+(x)=e^{-\int_{0}^t r_+(\varphi_s(x))ds}
U_+(\varphi_{t}(x)).\end{gathered}
\end{equation}
We consider two smooth non-vanishing vector fields $H,V$ such that 
$TM=\rr X\oplus \rr H\oplus \rr V$ and so that the induced projection on $H$ parallel to $\rr X\oplus \rr V$
\begin{equation}\label{piH} 
\pi_H: TM\to \rr H 
\end{equation}
is an isomorphism  $\pi_H: E_u\to \rr H$. For example we can consider $H$ to be a smooth approximation of $U_-$ and $V$ be a smooth approximation of $U_+$, satisfying 
\begin{equation}\label{approximationUpm}
\|U_--H\|_{C^\gamma}+\|U_+-V\|_{C^\gamma}<\eps ,\quad \|\mc{L}_XH-\mc{L}_XU_-\|_{C^0}+\|\mc{L}_XV-\mc{L}_XU_+\|_{C^0}<\eps
\end{equation} 
for some small $\eps>0$. Such approximation can be done  using a partition of unity and in local charts by convolutions, the first coordinate being chosen so that $\pl_{x_1}=X$ in that chart (as in the proof of Lemma \ref{approximationlemma}).
Up to multiplying $U_-$ by a positive $C^\gamma$ function $a$ satisfying $Xa\in C^\gamma$ and $\|a-1\|_{C^0}+\|Xa\|_{C^0}=\mc{O}(\eps)$ (in which case it simply amounts to modify $r_-$ by a coboundary $X\log(a)$) we can assume that
\[ U_-= H+r_VV+r_XX\]
for some $r=\left( \begin{array}{c}
r_V \\
r_X
\end{array}\right)\in C^\gamma(M;\rr^2)$. The functions $r_X,r_V$ characterize the regularity of the unstable foliation.
\begin{lemm}\label{rsolvesRiccati}
	The function $r$ solves a Riccati type equation 
	\begin{equation}\label{riccatiforr}
	0= -Xr(x)+B(x)r(x)+ (Q(x)r(x))r(x)+C(x)
	\end{equation}
	where $B,Q,C$ are smooth matrices and for each $\eps>0$, one can choose $H,V$ so that 
	\[
	|C(x)|+|Q(x)|<\eps, \quad \left |B(x)-\left( \begin{array}{ccc}
	-r_+-r_- & 0  \\
	0 & -r_-
	\end{array}\right)\right|<\eps.
	\]
	and $Q=(Q_0 \,\, 0)$ for $Q_0\in C^\infty(M)$.
	If $E_u\oplus E_s$ is smooth, as for example in the contact case where $E_u\oplus E_s=\mathrm{Ker}(\alpha)$ for some smooth contact form $\alpha$,  then we can choose $H,V\in E_u\oplus E_s$, and  then
	we have $r_X=0$ and
	\begin{equation}\label{riccaticontact}
	0= -Xr_V(x)+b(x)r_V(x)+ q(x)r_V(x)^2+c(x)
	\end{equation}
	with $|c(x)|+|q(x)|<\eps$ and $|b(x)+r_+(x)+r_-(x)|<\eps$.
\end{lemm}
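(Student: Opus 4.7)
The plan is to compute $[X,U_-]$ in two ways: directly from the Anosov relation $[X,U_-]=-r_-U_-$, and by a Leibniz expansion of $U_-=H+r_VV+r_XX$. Comparing the two, component by component in the smooth frame $(X,H,V)$, produces three scalar equations: the $H$-component determines $r_-$ in terms of $r$ and the smooth data, and substituting this back into the $V$- and $X$-components yields the Riccati system for $r$.

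Concretely, since $X,H,V$ are smooth and span $TM$, there are smooth functions such that
\begin{equation*}
[X,H]=\alpha_X X+\alpha_H H+\alpha_V V,\qquad [X,V]=\beta_X X+\beta_H H+\beta_V V.
\end{equation*}
Leibniz gives $[X,U_-]=[X,H]+(Xr_V)V+r_V[X,V]+(Xr_X)X$, and matching this with $-r_-(H+r_VV+r_XX)$ yields, from the $H$-component, $r_-=-(\alpha_H+r_V\beta_H)$. Substituting into the other two components produces
\begin{equation*}
Xr_V=(\alpha_H-\beta_V)r_V+\beta_Hr_V^2-\alpha_V,\qquad Xr_X=\alpha_Hr_X+\beta_Hr_Vr_X-\beta_Xr_V-\alpha_X,
\end{equation*}
which is exactly \eqref{riccatiforr} with the smooth matrices $B=\begin{pmatrix}\alpha_H-\beta_V & 0\\ -\beta_X & \alpha_H\end{pmatrix}$, $Q=\begin{pmatrix}\beta_H & 0\end{pmatrix}$, and $C=\begin{pmatrix}-\alpha_V\\-\alpha_X\end{pmatrix}$.

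The quantitative size bounds then come from the approximation hypotheses \eqref{approximationUpm}. The key identity is
\begin{equation*}
[X,H]-(-r_-H)=(\mc{L}_XH-\mc{L}_XU_-)-r_-(U_--H),
\end{equation*}
whose right-hand side is $\mc{O}(\eps)$ in $C^0$ since $r_-$ is bounded. Applying a smooth dual frame to read off the coefficients yields $\alpha_X,\alpha_V,\alpha_H+r_-=\mc{O}(\eps)$ in $C^0$, and the analogous computation for $V$ using $[X,U_+]=r_+U_+$ gives $\beta_X,\beta_H,\beta_V-r_+=\mc{O}(\eps)$. Plugging these into the formulas for $B,Q,C$ produces the stated bounds after an innocuous relabeling of $\eps$. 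The only mildly subtle point is that $\alpha_H$ is smooth while its target $-r_-$ is merely $C^\alpha$; the conclusion is claimed only in $C^0$, which is precisely what the hypothesis on $\|\mc{L}_XH-\mc{L}_XU_-\|_{C^0}$ is designed to supply.

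In the contact case, $\ker\alpha=E_u\oplus E_s$ is a smooth rank-$2$ subbundle of $TM$, and I will take the smooth approximations $H,V$ inside $\ker\alpha$; the flow-box convolution construction already used respects this smooth distribution and still delivers the quantitative bounds \eqref{approximationUpm}. Since $U_-\in E_u\subset\ker\alpha=\rr H\oplus\rr V$, the decomposition $U_-=H+r_VV+r_XX$ forces $r_X\equiv 0$. The $X$-component of the Riccati system then becomes trivial, and the remaining $r_V$-equation reads \eqref{riccaticontact} with $b=\alpha_H-\beta_V$, $q=\beta_H$ and $c=-\alpha_V$, satisfying the claimed $\eps$-bounds by the same argument.
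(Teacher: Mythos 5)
Your proof is correct and mathematically equivalent to the paper's: the paper writes the invariance of $E_u$ via the block form of $d\varphi_t$ in the frame $(H,V,X)$ (a M\"obius action) and differentiates at $t=0$, whereas you differentiate directly via Lie brackets; since $\dot A_{ij}$ is exactly minus the coefficient of $[X,T_j]$ in $T_i$, your formulas for $B,Q,C$ agree term by term with the paper's. Your presentation is arguably cleaner (it bypasses differentiating the quotient) and it makes explicit the needed bound $\dot A_{32}=\mc{O}(\eps)$ (your $\beta_X$), which the paper leaves implicit.
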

\begin{proof}
	Let us write the matrix $A^t$ of 
	$d\varphi_t$ in the basis $(H,V,X)$ under the form 
	\[
	A^t=\left( \begin{array}{ccc}
	A_{11}^t & A_{12}^t & 0  \\
	A_{21}^t & A_{22}^t & 0 \\
	A_{31}^t & A_{32}^t & 1 
	\end{array}\right).
	\]
	The invariance $d\varphi_tU_-\in \rr U_-$ implies that 
    $U_-^t(x):=d\varphi_t(x)U_-(x)$ can be written under the form 
	\[U_-^t(x)=a_t(x)\Big(H(\varphi_t(x))+r_V(\varphi_t(x))V(\varphi_t(x))+r_X(\varphi_t(x))X(\varphi_t(x)\Big)\]
	for some non vanishing function $a_t(x)$.
	Using the matrix representation $A^t$ of $d\varphi_t$, it is direct to see that 
	\[ \Big( \begin{array}{c}
	r_V(\varphi_t(x)) \\
	r_X(\varphi_t(x))
	\end{array}\Big)=\frac{1}{A_{11}^t(x)+A_{12}^t(x)r_V(x)}\left( \begin{array}{c}
	A_{21}^t(x)+A_{22}^t(x)r_V(x)\\
	A_{31}^t(x)+A_{32}^t(x)r_V(x)+r_X(x)
	\end{array}\right).
	\]
	We differentiate this equation at $t=0$ and use that $d\varphi_0={\rm Id}$, we then 
	get the scalar Riccati type equation 
	\[
	0= -Xr(x)+B(x)r(x)+ (Q(x)r(x))r(x)+C(x)
	\]
	with 
	\[\begin{gathered}
	C(x)=\left( \begin{array}{c}
	\dot{A}_{21}(x)\\
	\dot{A}_{31}(x)
	\end{array}\right), \quad  
	B(x)= \left( \begin{array}{cc}
	\dot{A}_{22}(x)-\dot{A}_{11}(x) & 0\\
	\dot{A}_{32}(x) & -\dot{A}_{11}(x)
	\end{array}\right),\\
	 Q(x)=(
	-\dot{A}_{12}(x) \,\, 0)
	\end{gathered}\]
	and here we have set $\dot{A}_{ij}(x):=\pl_tA_{ij}^t(x)|_{t=0}$. In the case 
	where $E_u\oplus E_s$ is a smooth bundle, we can choose $H,V\in E_u\oplus E_s$ and then we have $r_X=A_{31}=A_{32}=0$, which gives the equation on $r_V$
	\[
	0= -Xr_V(x)+(\dot{A}_{22}(x)-\dot{A}_{11}(x))r_V(x)- \dot{A}_{12}(x)r_V(x)^2+\dot{A}_{21}(x).
	\]
	
	We notice that by taking $H$ a smooth approximation (at scale $\eps>0$) of $U_-$ and $V$ a smooth approximation of $U_+$, we have as $\eps\to 0$
	\[ \begin{gathered}
	A_{11}^t(x)=e^{\int_{0}^t r_-(\varphi_s(x))ds}+o(1),\quad  A_{22}^t(x)=e^{-\int_{0}^t r_+(\varphi_s(x))ds}+o(1)\\
	A_{12}^t(x)=o(1),
	\quad A_{21}^t(x)=o(1), \quad A_{31}^t(x)=o(1)
	\end{gathered}\]
	where the remainders $o(1)$ are uniform in $(x,t)$ for small $t$, as well as the $t$-derivative. 
	This implies, using \eqref{approximationUpm}, that with such a choice, 
	\[\begin{gathered} \dot{A}_{11}(x)=r_-(x)+o(1), \quad \dot{A}_{22}(x)=-r_+(x)+o(1), \\
	\dot{A}_{12}(x)=o(1),
	\quad \dot{A}_{21}(x)=o(1), \quad \dot{A}_{31}(x)=o(1)
	\end{gathered}\]
and that shows the desired result.
\end{proof}

\begin{rem} 
	In the geodesic flow case, it is known that the bundles~$E_u$ and~$E_s$ are trivial. Then taking for~$H$ the intersection of the horizontal bundle and of the kernel of the Liouville form, and for~$V$ the vertical bundle, the equation becomes the well-known Ricatti equation
	\[Xr+r^2+K=0\]
	where~$K$ is the lift of the Gauss curvature to the unit tangent bundle \cite{Paternain}.
\end{rem}

Define the subbundles $E_0^*,E_u^*$ and $E_s^*$ of $T^*M$ by 
\[E_u^*(E_u\oplus\R X)=0,\quad  E_s^*(E_s\oplus\R X)=0,\quad E_0^*(E_u\oplus E_s)=0.\]
Then we have the following result on the regularity of $E_u$ (the same obviously holds for $E_s$ by taking the flow in reverse time).
\begin{theo}\label{Thregulrigid1}
Let $X$ be a smooth vector field generating an Anosov flow $\varphi_t$ on a $3$-dimensional closed manifold.
Let $r\in C^\gamma(M,\rr^2)$ be a solution of the Riccati equation \eqref{riccatiforr} for $\gamma>0$ and $U_-$ be the unstable vector field of \eqref{dphiU}.\\
1) Then~${\rm WF}(r)\subset E_u^*$ and ${\rm WF}(U_-)\subset E_u^*$.\\ 
2) Assume that $X$ is preserving a smooth volume form. Then $r\in H^{1-\delta}(M),U_-\in H^{1-\delta}(M)$ for any~$\delta>0$. If $X$ is a contact flow, then $U_-\in H^{2-\delta}(M)$ for all $\delta>0$.\\ 
3) Assume that $X$  is preserving a smooth volume form. If $r\in H^{2+\delta}(M)$, or equivalently $U_-\in H^{2+\delta}(M)$, for some $\delta>0$, then $r\in C^\infty$ and  $U_-\in C^\infty$.\\
4) In general, let $s>0$ such that there is $T>0$ large so that uniformly on $M$
\[(s-\frac{3}{2}) \int_0^T r_+\circ \varphi_t \, dt> \tfrac{1}{2}\int_0^T r_-\circ \varphi_t\, dt.\]
If $U_-\in H^s(M)$, then $U_-\in C^\infty(M)$.\\

If the vector field $X$ is in $C^\beta(M)$ for some $\beta>1$, then the same properties hold but replacing ${\rm WF}$ by ${\rm WF}_{H^{\beta}}$ in 1), $U_-\in H^{2-\delta}(M)$ by $U_-\in H^{\min(\beta,2)-\delta}(M)$ in 2), and $U_-\in C^\infty$ by $U_-\in H^{\beta-\delta}(M)$ for all $\delta>0$ in 3) and 4).
\end{theo}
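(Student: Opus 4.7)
The plan is to paralinearize the Riccati equation \eqref{riccatiforr} using Proposition~\ref{paraproduct} and then exploit the paradifferential source/sink/propagation estimates from Section~\ref{Para}. Writing $(Qr)r = 2T_{Qr}r + R(Qr,r)$ recasts \eqref{riccatiforr} as
\begin{equation*}
P^\sharp r = f, \qquad P^\sharp := -iX + iB + 2iT_{Qr}\in\prescript{\gamma}{}{\tilde\Psi}^{1}_{1,1}(M;\R^2), \qquad f := -iC - iR(Qr,r),
\end{equation*}
with real scalar principal symbol $p_1(x,\xi)=\xi(X(x))\otimes\mathrm{Id}$. The Hamilton flow $\Phi_t$ of $p_1$ is the symplectic lift of $\varphi_t$; using \eqref{dphiU} one sees that $(d\varphi_t)^{-T}e_u^*=e^{-\int_0^t r_-\circ\varphi_s\,ds}\,e_u^*\circ\varphi_t$ and $(d\varphi_t)^{-T}e_s^*=e^{\int_0^t r_+\circ\varphi_s\,ds}\,e_s^*\circ\varphi_t$, so that $E_u^*$ is a radial sink and $E_s^*$ a radial source for $\Phi_t$, with $H_{p_1}\langle\xi\rangle/\langle\xi\rangle=r_+$ on $E_u^*$ and $=-r_-$ on $E_s^*$.

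For part~(1) I would run a bootstrap loop. Proposition~\ref{WF(R)} gives $\mathrm{WF}_{H^{s+\gamma+\delta}}(R(Qr,r))\subset\mathrm{WF}_{H^{s+\delta}}(r)+\mathrm{WF}_{H^{s+\delta}}(r)$, so $f$ inherits the wavefront of $r$ with a gain of $\gamma$ Sobolev derivatives. The source estimate (Proposition~\ref{sourceestimate}) at $E_s^*$ has threshold $\sigma(\mathrm{Im}\,P^\sharp)-s_0r_-<0$ eventually, which is satisfied for all $s_0$ above a finite constant depending only on the $L^\infty$ size of the subprincipal, so from the a priori H\"older regularity one extracts $r\in H^s$ microlocally near $E_s^*$ for any $s$. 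Propagation of singularities (Proposition~\ref{propofsing}) then transports this $H^s$-regularity along $\Phi_t$ to every point of $\overline{T}^*M\setminus E_u^*$ (each characteristic trajectory is backward-asymptotic either to $E_s^*$ or to the non-characteristic set, where ellipticity of $X$ applies). Iterating, one obtains $\mathrm{WF}(r)\subset E_u^*$, and hence $\mathrm{WF}(U_-)\subset E_u^*$ since $U_-=H+r_VV+r_XX$ with smooth $H,V,X$.

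Parts~(2)--(4) are governed by the threshold at the sink $E_u^*$. Using Lemma~\ref{rsolvesRiccati}, the matrix $\sigma(\mathrm{Im}\,P^\sharp)|_{E_u^*}$ equals $B-\tfrac12\mathrm{div}_g X\cdot\mathrm{Id}$ modulo an $O(\epsilon)$ contribution from $T_{Qr}$, with $B$ upper-triangular of eigenvalues $-r_+-r_-$ and $-r_-$. The sink threshold $sr_++\sigma(\mathrm{Im}\,P^\sharp)<0$ on average caps the regularity obtainable from Proposition~\ref{sinkestimate}: the dominant eigenvalue $-r_-$ yields $s<r_-/r_+$ generically, which in the volume-preserving case (where $r_+\sim r_-$ cohomologically and $\mathrm{div}_g X=r_--r_+$ is a coboundary) becomes $s<1$, proving the first half of~(2); in the contact case the equation decouples ($r_X=0$) into a scalar Riccati with dominant eigenvalue $-r_+-r_-$, raising the ceiling to $s<2$. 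For~(3) and~(4), once the a priori regularity exceeds this ceiling one applies the \emph{source} estimate at $E_u^*$ for the reversed flow $-p_1$: the threshold becomes the negation of the sink ceiling and is now satisfied, so one bootstraps $r\in C^\infty$ (or $H^{\alpha-\delta}$ in the non-smooth case, with the cap forced by the $H^{s-(r-1)+\epsilon}$ loss in Propositions~\ref{sourceestimate}--\ref{sinkestimate}). The numerical condition $(s-\tfrac32)\int_0^T r_+>\tfrac12\int_0^T r_-$ in~(4) encodes this reversed threshold together with the half-divergence correction $\tfrac12\mathrm{div}_g X=\tfrac12(r_--r_+)$.

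The main obstacle will be the careful bookkeeping of the matrix-valued subprincipal symbol on $E_u^*$: showing that it is the eigenvalue $-r_-$ (not $-r_+-r_-$) of the upper-triangular matrix $B$ that governs the sharp G\aa rding step in the non-contact case, and that the contribution of the nonlinear operator $T_{Qr}$ can be absorbed below the threshold. This last step uses both the smallness $|Q|<\epsilon$ from Lemma~\ref{rsolvesRiccati} and the fact that $T_{Qr}$, as a paradifferential operator of order zero with coefficient $Qr$ of size $\epsilon\|r\|_{L^\infty}$, is microlocally negligible near $E_u^*$. A secondary subtlety is activating the paradifferential framework (which requires operator regularity $\gamma>1$) starting from the raw H\"older regularity $r\in C^\gamma$ with possibly $\gamma<1$; this is resolved by a preliminary bootstrap pass in the H\"older--Zygmund setting of \cite{Bonthonneau-Lefeuvre} until $\gamma$ crosses $1$, after which the paradifferential machinery takes over.
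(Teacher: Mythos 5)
Your overall strategy --- paralinearize \eqref{riccatiforr} via the paraproduct, identify $E_s^*$ as a radial source and $E_u^*$ as a radial sink for the symplectic lift of $\varphi_t$, run the source/propagation chain to obtain $\WF(r)\subset E_u^*$, bootstrap the Sobolev gain with Proposition~\ref{WF(R)}, and prove the rigidity statements (3)--(4) by applying the source estimate to the time-reversed flow at $E_u^*$ --- is exactly the paper's route. Two points, however, need correcting; the first is the more serious.

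First, the ``secondary subtlety'' you raise, that ``activating the paradifferential framework (which requires operator regularity $\gamma>1$)'' forces a preliminary H\"older--Zygmund pass when $\gamma<1$, does not exist. The regularity index of the relevant operator is that of
\[
P = -iX + iB - 2iT_r \in {\rm Diff}^1(M) + \prescript{\gamma}{}{\tilde\Psi}^{0}_{1,1}(M) \subset \prescript{1+\gamma}{}{\tilde\Psi}^{1}_{1,1}(M),
\]
namely $1+\gamma$, not $\gamma$: since $X$ and $B$ are smooth, the first-order part has infinite regularity, and the rough coefficient $r\in C^\gamma$ only enters a \emph{zeroth-order} paradifferential perturbation $T_r$, which sits in $\prescript{\gamma}{}{\tilde\Psi}^0_{1,1}\subset\prescript{1+\gamma}{}{\tilde\Psi}^1_{1,1}$. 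The hypothesis $r>1$ of Propositions~\ref{propofsing}, \ref{sourceestimate}, \ref{sinkestimate} is therefore met for \emph{every} $\gamma>0$, and importing \cite{Bonthonneau-Lefeuvre} is unnecessary. Believing this obstacle to be real would send you off in a direction the theorem is specifically designed to avoid.

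Second, your sink threshold ``the dominant eigenvalue $-r_-$ yields $s<r_-/r_+$ generically'' drops the $\tfrac12\,\mathrm{div}\,X$ contribution to $\sigma(\Im P)$ (you correctly use it later for (4), but not here). With $\int_0^T\mathrm{div}(X)\circ\varphi_t\,dt=\int_0^T(r_--r_+)\circ\varphi_t\,dt+o(T)$, the binding (second) diagonal entry of $\sigma(\Im P)+s\,H_{p_1}\langle\xi\rangle/\langle\xi\rangle$ on $E_u^*$ is $-\tfrac12 r_- - \tfrac12 r_+ + s r_+$ averaged, so the correct condition is
\[
s\int_0^T r_+\circ\varphi_t\,dt < \tfrac12\int_0^T(r_++r_-)\circ\varphi_t\,dt,
\]
i.e.\ $s<\tfrac12(1+r_-/r_+)$ in averaged form, not $s<r_-/r_+$. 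They agree only when $r_-\sim r_+$ cohomologically, which rescues your answer in the volume-preserving case but is not the general threshold. Relatedly, the paraproduct identity $(Qr)r=2T_{Qr}r+R(Qr,r)$ is only valid for $a=b$; here one must use the symmetric decomposition together with the smoothness of $Q_0$ to reach $2Q_0T_{r_V}r_V+R(\cdot,\cdot)$ modulo smoother terms.
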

\emph{Remark.} The results in 3) are weaker than what is proved in \cite{Hurder-Katok,Hasselblatt1992PAMS} where H\"older (in fact Zygmund) regularity $C^{1-\delta}$ and $C^{2-\delta}$ are shown to hold. The rigidity result 4) 
is in Sobolev norms, thus  not contained in those references (recall that $H^{2+\delta}(M)$ is only included in $C^{3/2}(M)$ for $\delta\to 0$). There are however similar results for Sobolev spaces in the work of De la Llave \cite{DLL2001}.
\begin{proof}
First, as ~$r\in C^{\gamma}$, we also have~$r\in H^{\gamma'}(M)$ for any~$0<\gamma'<\gamma$. Using Proposition \ref{paraproduct}, 
the equation \eqref{riccatiforr} can then be paralinearized and rewritten as follows (with 
$r=(r_V\,\, r_X)^T$)
\begin{equation}\label{paradiffeq}
-iXr+iBr-2iQ_0T_{r_V} r_V=-iQ_0R(r_V,r_V)+iC
\end{equation}
where $T_{r_V}\in \prescript{\gamma}{}{\tilde \Psi}^{0}_{1,1}(M)$ is the paradifferential operator of $r_V$ and $R(r_V,r_V)\in H^{2\gamma'}(M)$, while $B,C,Q_0\in C^\infty$. Note that the right hand side then belongs to $H^{2\gamma'}(M)$, thus more regular in Sobolev scale than $r$. We denote also by 
$T_{r}$ the $\prescript{\gamma}{}{\tilde \Psi}^{0}_{1,1}(M)$ valued matrix 
\[T_r:= \left( \begin{array}{cc}
	Q_0T_{r_V} & 0\\
	0 & 0
	\end{array}\right) 
\textrm{ with  principal symbol } \sigma(T_r)=\left( \begin{array}{cc}
	Q_0r_V & 0\\
	0 & 0
	\end{array}\right) +o(1) \textrm{ as }|\xi|\to \infty.\]	
The operator~$P:=-iX+iB-2iT_{r} \in {\rm Diff}^1(M)+\prescript{\gamma}{}{\tilde \Psi}_{1,1}^{0}(M)\subset \prescript{1+\gamma}{}{\tilde \Psi}_{1,1}^{1}(M)$ has principal symbol given by ~$p_1(x,\xi):=\sigma(P)(x,\xi)=\xi(X)$. We can use the ellipticity estimates of Corollary \ref{estimeeell}: this gives that for each $Z\in \Psi^0(M)$ with $\WF(Z)\cap \{\xi(X)=0\}=\emptyset$, $Zr\in H^{2\gamma'+1}(M)$, that is 
$\WF_{H^{2\gamma'+1}}(r)\subset \{\xi(X)=0\}$. 
We fix a smooth measure $\mu$, for example using a Riemannian metric, this induces an $L^2$ scalar product.
The principal symbol of the imaginary part of $P$ is
\[\sigma(\Im P)=\frac{B+B^*}{2}-2\sigma(T_r)+\frac{{\rm div}(X)}{2} \]
where ${\rm div}(X)=\mc{L}_X\mu/\mu$ and we have used the measure $d\mu$ to define $L^2$-adjoints. 
By Lemma \ref{rsolvesRiccati} 
we obtain 
\begin{equation}\label{sigmaImP}
\sigma(\Im P)=\left( \begin{array}{cc}
	-r_+-r_- & 0\\
	0 & -r_-
\end{array}\right)+\frac{{\rm div}(X)}{2}+\mc{O}(\eps)+o(1) \textrm{ as }|\xi|\to \infty. 
\end{equation}
Notice that in the case of a contact flow we can choose $r_-=r_+$, ${\rm div}(X)=0$ and $r_X=0$ so that we get a scalar equation and $\sigma(\Im P)=-2r_-+\mc{O}(\eps)+o(1)$ as $|\xi|\to \infty$. We take the canonical flat connection on the trivial $\rr^2$-bundle over $M$ and use the notations of Lemma \ref{sourceestimate}, in particular $\Phi_t=e^{tH_{p_1}}$ is the Hamilton flow of $p_1$, i.e. the symplectic lift of $\varphi_t$ in our case. Moreover $L:=E_s^*\cap \pl\bbar{T}^*M$ is a source for $\Phi_t$ since $\varphi_T$ is Anosov. By \eqref{dphiU}, on $E_s^*$ one has 
\[ \int_0^T \frac{H_{p_1}\cjg \xi\cjd}{\cjg\xi\cjd}\circ\Phi_t \, dt= \log \frac{\cjg \xi\circ \Phi_T\cjd}{\cjg \xi\cjd}= \log \frac{\cjg \xi\circ d\varphi_{T}^{-1}\cjd}{ \cjg \xi\cjd}=-\int_0^T r_-\circ \varphi_t\, dt.\]
Notice also that for $T>0$ large 
\[\int_0^T{\rm div}(X)\circ \varphi_t \, dt=\int_0^T (r_--r_+)\circ \varphi_t\, dt+o(T).\]
For $s\in\rr$, one has for $T>0$ that on $L$ 
\[\begin{gathered}
\int_0^T\lp\sigma(\Im P)+s \frac{H_p\ls\xi\rs}{\ls\xi\rs}\rp\circ\Phi_t dt=\\
\left( \begin{array}{cc}
	-\int_0^T (\tfrac{3}{2}r_++(\tfrac{1}{2}+s)r_-)\circ \varphi_t\, dt & 0\\
	0 & -\int_0^T ((\tfrac{1}{2}+s)r_-+\tfrac{1}{2}r_+)\circ \varphi_t\, dt
	\end{array}\right)+\mc{O}(\eps T).
\end{gathered}\]
We see that for all $s>-1/2$ this term is negative for $\eps>0$ chosen small enough. Thus we can apply Proposition \ref{sourceestimate} to deduce that there is $A\in \Psi^0(M)$ 
elliptic near $E_s^*$ such that $Ar\in H^{2\gamma'}(M)$ for all $\gamma'<\gamma$; we thus get $\WF_{H^{2\gamma'}}(r)\cap {\rm ell}(A)=\emptyset$ for all $\gamma'<\gamma$. Since, by the Anosov property of the flow, each $(x,\xi)\in T^*M\setminus E_u^*$ is such that there is $T$ so that $\Phi_{-T}(x,\xi)\in {\rm ell}(A)\cup\{\xi(X)\not=0\}$, we can use the propagation of singularity of Proposition \ref{propofsing} to deduce that for each $A'\in \Psi^0(M)$ with $\WF(A')\cap E_u^*=\emptyset$, then 
$A'r\in H^{2\gamma'}(M)$, i.e. $\WF_{H^{2\gamma'}}(r)\subset E_u^*$, $\forall \gamma'<\gamma$.

Now take $S\in \Psi^0(M)$ such that $\WF(S-1)\subset O_u$ and $\WF(S)\cap E_u^*=\emptyset$, where $O_u$ is an arbitrarily small conic neighborhood of $E_u^*$. Applying $S$ to \eqref{paradiffeq}, we get 
\[ PSr=-i(SQ_0R(r_V,r_V)+SC)+[P,S-1]r.\]
By applying Lemma \ref{WF(R)} with $a=b=r_V$, 
$\eps=\gamma$, $\alpha=\beta=\gamma'$ and $\delta=\gamma'$, using $\WF_{H^{2\gamma'}}(r)\subset E_u^*$   we see that $\WF_{H^{3\gamma'}}(Q_0SR(r_V,r_V))\subset E_u^*$ and we also have $\WF_{H^{3\gamma'}}([P,S-1]r)\subset O_u$ since $\WF(S-1)\subset O_u$.
We can then apply the same argument as above where now the right hand side is in $H^{3\gamma'}(M)$ microlocally outside $O_u$, and we obtain that $\WF_{H^{4\gamma'}}(Sr)\subset O_u$. Since
$O_u$ was chosen arbitrarily small, we conclude that $\WF_{H^{4\gamma'}}(r)\subset E_u^*$. Then we bootstrap this argument and obtain that 
\[ \WF(r)\subset E_u^*, \quad \WF(U_-)\subset E_u^*.\]

Next, we apply the sink radial estimate of Proposition \ref{sinkestimate}. We already know that 
$Ar\in H^{N}(M)$ for all $N\geq 0$ if $A\in \Psi^0(M)$ has $\WF(A)\cap E_u^*=\emptyset$.
As above, we have on $L'=E_u^*\cap \pl\bbar{T}^*M$.
For $s\in\rr$, one has for $T>0$ that on $L'$ 
\[\begin{gathered}
\int_0^T\lp\sigma(\Im\mathrm{P})+s \frac{H_{p_1}\ls\xi\rs}{\ls\xi\rs}\rp\circ\Phi_t\, dt=\\
\left( \begin{array}{cc}
	-\int_0^T (r_+(\tfrac{3}{2}-s)+\tfrac{1}{2}r_-)\circ \varphi_t\, dt & 0\\
	0 & -\int_0^T (\tfrac{1}{2}r_-+(\tfrac{1}{2}-s)r_+)\circ \varphi_t\, dt
	\end{array}\right)+\mc{O}(\eps T).
\end{gathered}\]
This is negative for $T$ large (and $\eps>0$ chosen small) if, uniformly on $M$,
\begin{equation}\label{condonthreshold} 
s \int_0^T r_+\circ \varphi_t \, dt< \frac{1}{2}\int_0^T(r_++r_-)\circ \varphi_t\, dt.
\end{equation}
For such $s$ we have that $r\in H^s(M)$ if $r\in H^{s-\gamma'}(M)$ for some $\gamma'<\gamma$. In the volume preserving case, the condition \eqref{condonthreshold} holds when $s<1$.
When the flow is contact $r_X=0$ and $r_+=r_-$ so that one has a single equation and the term above becomes negative when $s<2$, which shows that $r\in H^{2-\delta}(M)$ for all $\delta>0$.\\

To prove the rigidity result, we apply the source estimate to the flow in reverse time. Let now 
$P_-:=-P=iX-iB-2iT_{r}$ and we analyze the PDE $P_-r=iQ_0R(r_V,r_V)-iC$. The set $L'=E_u^*\cap \pl\bbar{T}^*M$ becomes a source for the Hamilton vector field $H_{-p_1}=-H_{p_1}$ where $-p_1=\sigma(P_-)=-\xi(X)$, whose flow is given by $\Phi_{-t}$. For $s\in \R$, let us analyze the negativity condition on $L'$ of 
\[\begin{gathered}
\int_0^T -\lp\sigma(\Im P)+s \frac{H_{p_1}\ls\xi\rs}{\ls\xi\rs}\rp\circ\Phi_{-t} dt=\\
\left( \begin{array}{cc}
	\int_{-T}^0 ((\tfrac{3}{2}-s)r_++\tfrac{1}{2}r_-)\circ \varphi_{t}\, dt & 0\\
	0 & \int_{-T}^0 (\tfrac{1}{2}-s)r_++\tfrac{1}{2}r_-)\circ \varphi_t\, dt
	\end{array}\right)+\mc{O}(\eps T).
\end{gathered}\]
Taking $\eps>0$ arbitrarily small and $T$ large enough, we see that if uniformly on $M$
\begin{equation}\label{conditiononsnonvol}
(s-\frac{3}{2}) \int_0^T r_+\circ \varphi_t \, dt> \frac{1}{2}\int_0^Tr_-\circ \varphi_t\, dt
\end{equation}
then we can apply Proposition \ref{sourceestimate} to deduce that if $r\in H^s$, then in fact 
$r$ has the regularity of the right hand side $iQ_0R(r_V,r_V)-iC\in H^{s+\gamma'}(M)$. Bootstrapping the argument we get that $r\in C^\infty(M)$. In particular, if $X$ preserves a smooth measure, the condition is always satisfied if $s>2$ since 
$\int_0^Tr_-\circ \varphi_t dt=\int_0^Tr_+\circ \varphi_t dt+o(T)$ for large $T$.

Let us briefly discuss the case where the vector field $X\in C^\alpha$ for $\alpha>1$ and assume $\gamma<\alpha$. Using Lemma \ref{regularization}, we can write $-iX=T_{p}+{\rm Op}(p^\flat)$ where $p(x,\xi)=\xi(X)\in C^\alpha S^1(M)$, $p^\flat\in C^{\alpha-1}S^{1-\alpha}_{1,1}(M)$ and $T_p\in \prescript{\alpha}{}{\tilde S}^1_{1,1}(M)$. By \eqref{boundroughcoef}, ${\rm Op}(p^\flat):H^{s+1-\alpha}(M)\to H^s(M)$ for all $s\in (0,\alpha)$. Let $\beta:=\min(\alpha,\gamma+1)$. The equation \eqref{paradiffeq} can be rewritten as 
\[T_pr+iBr-2iQ_0T_{r_V} r_V=-iQ_0R(r_V,r_V)+iC-{\rm Op}(p^\flat)r\in H^{\min(2\gamma',\gamma'+\alpha-1)}(M)\]
for all $\gamma'<\min(\gamma,1)$.
Using the ellipticity result from Corollary \ref{estimeeell}, $r$ is microlocally 
in $H^{2\gamma'}(M)$ outside $\xi(X)=0$ (we have used that $2\gamma'<\gamma'+\alpha)$. We can then apply Propositions \ref{sourceestimate}, \ref{propofsing}   as above, we obtain that $r$ is microlocally $H^{1-\delta}(M)$ outside $E_u^*$ for all $\delta>0$ in the volume preserving case. In the contact case that gives that $u$ is microlocally in $H^{\min(\alpha,2)-\delta}(M)$ 
for all $\delta>0$.
Using Proposition \ref{sinkestimate} as above, we also directly obtain that if $u\in H^{s}(M)$ for 
$s$ satisfying \eqref{conditiononsnonvol}, then $r\in H^{\alpha-\delta}(M)$ for all $\delta>0$. If the flow is volume preserving and $C^\alpha$ for $\alpha>2$, this shows that $r\in H^s(M)$ with $s>2$ implies $r\in H^{\alpha-\delta}(M)$ for all $\delta>0$.
\end{proof}

\subsection{Higher dimension}

Let $M$ be a smooth compact manifold of dimension $d$ and let $X$ be a smooth vector field generating an Anosov flow $\varphi_t$. We denote by $E_u$ and $E_s$ the unstable and stable bundles as above. They are $C^\gamma$ H\"older continuous for some $\gamma>0$, \cite{HPS70}. We denote by $n_u=\dim E_u$ and $n_s=\dim E_s$ so that $d=1+n_u+n_s$.

We can construct $H\subset TM$ a smooth subbundle of dimension $n_u$ which can be chosen arbitrarily close to $E_u$, viewed as points in the Grassmanian $\mc{G}_{n_u}(M)$ of $n_u$ dimensional subpaces in $TM$. 
Similarly let $V$ be a smooth approximation of the weak unstable bundle $E_{s,0}:=E_s\oplus \rr X$. Due to our choices, there are smooth projections 
\[ \pi_H : TM\to H ,\quad \pi_V : TM\to V\] 
induced by the decomposition $TM=H\oplus V$
and $\pi_H$ is an isomorphism when restricted to a neighborhood $W_{H}\subset \mc{G}_{n_u}(M)$ corresponding to vector subbundles close to $H$ (for example contained in an unstable cone of $E_u$). In particular 
\[\pi_H(E_u): E_u \to H\] 
is a $C^\gamma$ H\"older isomorphism of bundles, with a $C^\gamma$ H\"older inverse $\pi_H^{-1}$.
We have 
\begin{lemm}\label{End}
There is a one-to-one smooth correspondance between $C^\gamma$- subbundles of dimension $n_u$ near $E_u$ and the space $C^\gamma(M; \mc{L}(H,V))$ of $C^\gamma$ linear maps from $H$ to $V$, given by 
\[ \Psi: E\in W_H \mapsto \pi_V\circ \pi_{H}(E)^{-1}\]
\end{lemm}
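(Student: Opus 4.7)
The lemma is essentially the standard fact that a subbundle transverse to a complementary subbundle is a graph. My plan is to exhibit the inverse of $\Psi$ explicitly via the graph construction, verify the two compositions yield the identity, then check regularity pointwise.

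First, I would define the candidate inverse $\Phi: C^\gamma(M;\mc{L}(H,V))\to W_H$ by sending a $C^\gamma$ bundle map $L:H\to V$ to its graph
\[\Phi(L):=\{h+L(h)\,|\, h\in H\}\subset TM,\]
which is a $C^\gamma$ subbundle of rank $n_u$ close to $H$ provided $L$ is sufficiently small in $C^0$. The neighborhood $W_H$ can be characterized as those subbundles $E$ of rank $n_u$ such that $\pi_H|_E:E\to H$ is a $C^\gamma$ isomorphism of bundles, which is an open condition on $E$ in the Grassmanian.

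Next I verify $\Psi\circ\Phi=\mathrm{Id}$ and $\Phi\circ\Psi=\mathrm{Id}$. For $\Psi\circ\Phi$: on the graph $\{h+L(h)\}$ the projection $\pi_H$ reads $h+L(h)\mapsto h$ (since $L(h)\in V$), so its inverse is $h\mapsto h+L(h)$, whence $\pi_V\circ(\pi_H|_{\Phi(L)})^{-1}(h)=\pi_V(h+L(h))=L(h)$. For $\Phi\circ\Psi$: given $E\in W_H$ with $L:=\Psi(E)=\pi_V\circ(\pi_H|_E)^{-1}$, any $e\in E$ decomposes as $e=\pi_H(e)+\pi_V(e)$; setting $h=\pi_H(e)$ gives $\pi_V(e)=\pi_V\circ(\pi_H|_E)^{-1}(h)=L(h)$, so $e=h+L(h)\in\Phi(L)$. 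The reverse inclusion follows by dimension count, and both subbundles have rank $n_u$.

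For the regularity correspondence, if $E\in W_H$ is $C^\gamma$, then locally $E$ has $C^\gamma$ frames; expressing them in the smooth $H\oplus V$ splitting, the projections $\pi_H|_E$ and $\pi_V|_E$ act smoothly on such frames and are $C^\gamma$-valued, so $\Psi(E)=\pi_V\circ(\pi_H|_E)^{-1}$ is $C^\gamma$ (matrix inversion is a smooth operation where defined). Conversely, given $L\in C^\gamma(M;\mc{L}(H,V))$, the graph $\Phi(L)$ is cut out by the smooth equation $\pi_V(v)=L(\pi_H(v))$ and thus inherits $C^\gamma$ regularity. The construction is smooth in the natural sense coming from the smooth structure of the Grassmanian and of $\mc{L}(H,V)$, since $\Psi$ is built from smooth algebraic operations on smooth bundles. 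Therefore $\Psi$ and $\Phi$ are mutual inverses and the claimed one-to-one smooth correspondence holds. I do not foresee any serious obstacle; the only care needed is to specify $W_H$ precisely as the open set where $\pi_H|_E$ is invertible, which is exactly the subbundles lying in a suitable unstable cone around $E_u$ as constructed above.
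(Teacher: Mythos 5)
Your proof is correct and follows essentially the same approach as the paper's: you exhibit the inverse via the graph construction $h\mapsto h+L(h)$ and check regularity by working with local frames in the smooth $H\oplus V$ splitting, which is exactly what the paper does (your argument just spells out the verification of the inverse compositions in more detail).
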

\begin{proof} The map is smooth, and its inverse is given by 
the map which to $\hat{U}\in  C^\gamma(M; \mc{L}(H,V))$ assigns the bundle given by the graph of $\hat{U}$,
i.e. the image of the linear map 
\[H\to TM=H\oplus V, \quad h\mapsto h+\hat{U}h.\]
The regularity of $\Psi(E)$ is easily determined by considering a local basis of smooth sections of $H,V$ and a $C^\alpha$ sections of $E$, and then writing the  $\mc{L}(H,V)$ map $\pi_V\circ \pi_{H}(E)^{-1}$ as a matrix in these basis: if $(T_j)_{j=1,\dots,n_u}$ is a basis of $H$, $(T_j)_{j=n_u+1,\dots,d}$ is a basis of $V$, and $(U_j)_{j=1,\dots,n_u}$ is a $C^\alpha$ basis of $E$, then the matrix representing the map $\Psi(E)$ in the basis $(T_j)_j$ is given by 
\[ \hat{U}= U^V(U^H)^{-1}
\]    
where $U_j=\sum_{k=1}^{n_u} U^H_{kj}T_k+\sum_{k=n_u+1}^{d}U^V_{kj}T_{k}$.
\end{proof}

The flow $d\varphi_t: TM\to TM$ also acts on the Grassmannian 
$\mc{G}_{n_u}(M)$ by simply writing
\[ \Phi_t: \mc{G}_{n_u}(M)\to \mc{G}_{n_u}(M), \quad \Phi_t(E)=d\varphi_t(E).\]
We would like to describe this action in terms of the linear maps described in Lemma \ref{End}.

\begin{lemm}\label{actiononEnd}
The action of $\Psi \Phi_t \Psi^{-1}$ on $\mc{L}(H, V)$ can be described as follows: let 
\[ A(t)=\left(\begin{array}{cc}
A_1(t) & A_2(t) \\
A_3(t) & A_4(t)
\end{array}\right) 
\]
be the block decomposition of $d\varphi_t: H\oplus V\to H\oplus V$, then one has 
\begin{equation}\label{formevolU} 
\Psi \Phi_t \Psi^{-1} \hat{U}=(A_3(t)+A_4(t)\hat{U})(A_1(t)+A_2(t)\hat{U})^{-1}\end{equation}
\end{lemm}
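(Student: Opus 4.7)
The plan is a direct computation, unpacking the definition of $\Psi$ and pushing a graph subspace forward by $d\varphi_t$.

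First, I would recall that by Lemma \ref{End}, the inverse $\Psi^{-1}(\hat U)$ is the bundle whose fiber at $x$ is the graph of $\hat U(x)\in\mathcal{L}(H_x,V_x)$, i.e.\ the image of the linear map $H_x\to H_x\oplus V_x$, $h\mapsto h+\hat U(x)h$. Applying $d\varphi_t$ at $x$ to such an element and using the block decomposition
\[ d\varphi_t(x)=\begin{pmatrix} A_1(t) & A_2(t)\\ A_3(t) & A_4(t)\end{pmatrix}:H_x\oplus V_x\longrightarrow H_{\varphi_t(x)}\oplus V_{\varphi_t(x)},\]
one gets
\[ d\varphi_t(h+\hat U h)=\bigl(A_1(t)+A_2(t)\hat U\bigr)h+\bigl(A_3(t)+A_4(t)\hat U\bigr)h,\]
where the first summand lies in $H_{\varphi_t(x)}$ and the second in $V_{\varphi_t(x)}$. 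Thus the fiber of $\Phi_t(\Psi^{-1}(\hat U))$ at $\varphi_t(x)$ is the image of the map $h\mapsto (A_1(t)+A_2(t)\hat U)h+(A_3(t)+A_4(t)\hat U)h$.

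The second step is to recognize this image as a graph over $H_{\varphi_t(x)}$. Provided $A_1(t)+A_2(t)\hat U$ is invertible as an element of $\mathcal{L}(H_x,H_{\varphi_t(x)})$, we can reparameterize by setting $h'=(A_1(t)+A_2(t)\hat U)h$, so that
\[ h'\ \longmapsto\ h'+(A_3(t)+A_4(t)\hat U)(A_1(t)+A_2(t)\hat U)^{-1}h'.\]
By the graph description of $\Psi$ in Lemma \ref{End}, this exactly says
\[\Psi\Phi_t\Psi^{-1}\hat U=(A_3(t)+A_4(t)\hat U)(A_1(t)+A_2(t)\hat U)^{-1},\]
which is the desired formula.

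The only substantive point is the invertibility of $A_1(t)+A_2(t)\hat U$, which is what could fail if $\hat U$ or $\Phi_t(\Psi^{-1}(\hat U))$ left the neighborhood $W_H$ on which $\pi_H$ is an isomorphism of bundles. This is automatic under the hypothesis that $\Psi\Phi_t\Psi^{-1}\hat U$ is defined: by construction, $\Psi^{-1}(\hat U)\in W_H$ and, if $\Phi_t(\Psi^{-1}(\hat U))\in W_H$ as well, then $\pi_H$ restricted to the image bundle $\Phi_t(\Psi^{-1}(\hat U))$ is an isomorphism onto $H_{\varphi_t(x)}$, which is exactly the statement that $h\mapsto A_1(t)h+A_2(t)\hat Uh$ is invertible. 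Thus the formula holds whenever the left-hand side is defined; no further argument is needed.
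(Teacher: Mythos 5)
Your proof is correct and follows essentially the same route as the paper: both apply the block decomposition of $d\varphi_t$ to the graph of $\hat U$ and re-express the image as a graph over $H_{\varphi_t(x)}$. The only difference is cosmetic — you work intrinsically with the graph map $h\mapsto h+\hat U h$, whereas the paper carries out the same computation in the bases $(T_j)$ of Lemma~\ref{End} via the matrices $U^H,U^V$; your closing remark on invertibility of $A_1(t)+A_2(t)\hat U$ makes explicit what the paper's restriction to small $t$ handles implicitly.
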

\begin{proof}
Let us use the bases and notations of the proof of Lemma \ref{End}. For small $t$, we get 
for each $x\in M$
\[A(t)U_j(x)=\sum_{i=1}^{d}U_{kj}(x)A_{ik}(t;x)T_i(\varphi_t(x))\] 
where $A_{ik}(t;x)$ are the matrix components of $A(t)$ at the point $x$ in the basis 
$(T_j)_j$, and $U_{jk}$ are the matrix components of $(U^H,U^V)$ in $(T_j)_j$, i.e. for $a_k\in\rr$ 
\[A(t)(\sum_{k=1}^da_kT_k(x))=\sum_{i,k=1}^dA_{ik}(t;x)a_k T_i(\varphi_t(x)).\]
This implies that 
\[ \begin{split}
\Psi \Phi_t \Psi^{-1} \hat{U}=& (A_3(t)U^H +A_4(t)U^V)(A_1(t)U^H+A_2(t)U^V)^{-1}\\
 =& (A_3(t)+A_4(t)\hat{U})(A_1(t)+A_2(t)\hat{U})^{-1}.
\end{split}\] 
This concludes the proof.
\end{proof}
As a corollary, we shall describe the infinitesimal action as a Riccati operator. First, to make invariant sense of derivatives on $\mc{L}(H,V)=V\otimes H^*$, we fix a Riemannian metric $g$ so  on $TM$ (here we use $H^*$ for the dual of $H$, not the annihilator as we did for $E_u^*,E_s^*$ before). 
We recall that we can differentiate a section $v\otimes h^*\in V\otimes H^*$ using the Lie derivative by viewing it as an element in $TM\otimes T^*M$:
\[  \mc{L}_X (v\otimes h^*)_x=\pl_t\Big(d\varphi_{-t}(\varphi_t(x))v_{\varphi_t(x)}\otimes d\varphi_t(x)^\top h^*_{\varphi_t(x)}\Big)\Big|_{t=0}\in  TM\otimes T^*M. \]

As in dimension $3$, using a partition of unity, local charts and regularizations through convolutions with $\pl_{x_1}=X$ in those charts, for each $\eps>0$ we can choose $H,V$ (equivalently $\pi_H,\pi_V$) and $g$ so that (using a fixed background metric on $TM$ for defining the norms)
\begin{equation}\label{apprixamationpiH}
\begin{gathered} 
\|\pi_H-\pi_{E_{u}}\|_{C^0}+\|\pi_V-\pi_{E_{s0}}\|_{C^0}<\eps, \quad \|g\|_{C^0}+\|\mc{L}_Xg\|_{C^0}\leq 1\\
\|\mc{L}_X(\pi_H-\pi_{E_{u}})\|_{C^0}+\|\mc{L}_X(\pi_V-\pi_{E_{s0}})\|_{C^0}<\eps.
\end{gathered}
\end{equation}
where $\pi_{E_u}:TM\to E_u$ and $\pi_{E_{s0}}:TM\to E_s\oplus \rr X$ are the $C^\gamma$ projections induced by the decomposition $TM=E_u\oplus(E_s\oplus \rr X)$. 
We can also assume that $|g(X,v)|<\eps\|v\|_g$ for all $v\in E_s$, 
for some arbitrarily  small $\eps>0$ and that $g$ is close enough (in H\"older norm) 
to some adapted metric for $X$ so that 
\begin{equation}\label{adaptedmetric}
 \begin{split} 
\forall t\geq 0,\quad &  (1-\eps)e^{(-\nu_u^{\max}-\eps) t}\leq \|d\varphi_{-t}|_{E_u}\|_{g}\leq (1+\eps)e^{(-\nu_u^{\min}+\eps) t},\\
 \forall t\geq 0, \quad & (1-\eps)e^{(-\nu_s^{\max}-\eps) t}\leq \|d\varphi_{t}|_{E_s}\|_{g}\leq (1+\eps)e^{(-\nu_s^{\min}+\eps) t}
\end{split}
\end{equation} 
where $\nu_u^{\min}, \nu_u^{\max}$ are the minimal expansion rates of the flow on $E_u$ and similarly for $E_s$.

\begin{corr}
Let $\hat{U}\in C^\alpha(M;\mc{L}(H,V))$ be the representation of a vector subbundle of dimension $n_u$ close to $E_u$ in $\mc{G}_{n_u}(M)$. This subbundle is invariant by $d\varphi_t$ if and only if 
\begin{equation}\label{riccatinabla} 
\mc{L}_X\hat{U}+\hat{U}\dot{A}_2\hat{U} +\hat{U}\dot{A}_1-\dot{A}_4\hat{U}-\dot{A}_3=0
\end{equation} 
where 
\[\begin{gathered}
\dot{A}_3(x)=\pl_{t}(d\varphi_{-t}(\varphi_t(x))\pi_Vd\varphi_t(x)\pi_H)|_{t=0}, \quad  
\dot{A}_4(x)=\pl_{t}(d\varphi_{-t}(\varphi_t(x))\pi_Vd\varphi_t(x)\pi_V)|_{t=0}\\
\dot{A}_1(x)=\pl_{t}(d\varphi_{t}(x)^{-1}\pi_Hd\varphi_t(x)\pi_H|_{t=0}, \quad  
\dot{A}_2(x)=\pl_{t}(d\varphi_{t}(x)^{-1}\pi_Hd\varphi_t(x)\pi_V)|_{t=0}.
\end{gathered}\]
Moreover for each $\eps>0$, one can choose $H,V$ arbitrarily close to $E_u$ and $E_s\oplus \rr X$ so that uniformly on $M$ 
\[
 \sum_{j=1}^4\|\dot{A}_j\|_{C^0}\leq \eps. \]
 \end{corr}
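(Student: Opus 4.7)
The plan is to differentiate the invariance equation from Lemma \ref{actiononEnd} at $t=0$ after transporting it into $\mathrm{End}(T_xM)$, and then to verify the smallness claim by comparison with the $d\varphi_t$-invariant projections onto $E_u$ and $E_{s0}:=E_s\oplus\R X$.

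By Lemma \ref{actiononEnd}, invariance of $\mathrm{graph}(\hat U)$ under $d\varphi_t$ is equivalent to $\hat U(\varphi_t(x))(A_1+A_2\hat U(x))=A_3+A_4\hat U(x)$ as maps $H(x)\to V(\varphi_t(x))$. Using the factorizations $A_1+A_2\hat U(x)=\pi_H(\varphi_t)d\varphi_t(\iota_H+\hat U(x))$ and $A_3+A_4\hat U(x)=\pi_V(\varphi_t)d\varphi_t(\iota_H+\hat U(x))$, with $\iota_H+\hat U(x):H(x)\to T_xM$ denoting $h\mapsto h+\hat U(x)h$, and post-composing with $d\varphi_t(x)^{-1}$, this identity becomes
\[ \tilde U(t;x)(\iota_H+\hat U(x))=P_V(t;x)(\iota_H+\hat U(x)), \]
where $\tilde U(t;x):=d\varphi_{-t}(\varphi_t(x))\hat U(\varphi_t(x))d\varphi_t(x)$, $P_V(t;x):=d\varphi_t(x)^{-1}\pi_V(\varphi_t(x))d\varphi_t(x)$, and $\hat U$ is viewed as an endomorphism of $TM$ via the embedding $V\otimes H^*\hookrightarrow\mathrm{End}(TM)$. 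Both sides agree at $t=0$; differentiating then gives $\mc L_X\hat U(\iota_H+\hat U)=\mc L_X\pi_V(\iota_H+\hat U)$ on $H(x)$. Expanding using $\dot A_1=\mc L_X\pi_H\pi_H$, $\dot A_2=\mc L_X\pi_H\pi_V$, $\dot A_3=\mc L_X\pi_V\pi_H$, $\dot A_4=\mc L_X\pi_V\pi_V$, together with the key observations $\mc L_X\hat U(\hat U h)=\hat U\dot A_2\hat U(h)$ on $H$ (from the fact that $\mc L_X\pi_H$ sends $V$ into $H$, verified by differentiating $\pi_H^2=\pi_H$) and $\hat U\dot A_1\equiv 0$ (since $\dot A_1|_H$ lands in $V$ while $\hat U|_V=0$), one recovers exactly \eqref{riccatinabla}. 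The converse follows by ODE uniqueness on each orbit: \eqref{riccatinabla} with $\hat U(x_0)$ fixed determines $\hat U$ along the orbit, and Lemma \ref{actiononEnd} produces a solution of the same ODE, so they must coincide.

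For the smallness of $\dot A_j$, I would use that if $H=E_u$ and $V=E_{s0}$, then $d\varphi_t$-invariance of these bundles gives $d\varphi_t(x)^{-1}\pi_{E_u}(\varphi_t(x))d\varphi_t(x)=\pi_{E_u}(x)$ for all $t$ (and similarly for $\pi_{E_{s0}}$); hence $\mc L_X\pi_{E_u}=\mc L_X\pi_{E_{s0}}=0$ pointwise, and the corresponding $\dot A_j$ all vanish. For smooth $H,V$ approximating $E_u,E_{s0}$ as in \eqref{apprixamationpiH}, we have $\mc L_X\pi_H=\mc L_X(\pi_H-\pi_{E_u})$ and analogously for $\pi_V$, whose $C^0$ norm is controlled by $\eps$; the explicit formulas $\dot A_j=\mc L_X\pi_\bullet\cdot\pi_\bullet$ then yield $\|\dot A_j\|_{C^0}\leq C\eps$ uniformly. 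The one delicate technical point is that $\pi_{E_u}$ and $\pi_{E_{s0}}$ are only H\"older continuous in $x$, so $\mc L_X\pi_{E_u}$ is not a classical pointwise derivative; however its vanishing is really the $t$-constancy of the conjugation pullback, which holds in the strong pointwise sense needed for the approximation argument.
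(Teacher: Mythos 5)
Your proof is correct and follows essentially the same route as the paper: differentiate the invariance relation furnished by Lemma~\ref{actiononEnd} at $t=0$, and express the resulting $\dot A_j$ as Lie derivatives of the smooth projections so that choosing $H,V$ as in~\eqref{apprixamationpiH} gives $\|\dot A_j\|_{C^0}=\mc O(\eps)$. The one noteworthy deviation is purely algebraic: instead of differentiating the M\"obius expression $(\tilde A_3+\tilde A_4\hat U)(\tilde A_1+\tilde A_2\hat U)^{-1}$ directly (which, taken literally as endomorphisms of $T_xM$, requires inverting something that equals $\pi_H$ at $t=0$), you clear denominators first by factoring through $\iota_H+\hat U:H\to TM$, differentiate the resulting linear identity, and then use the block structure of $\mc L_X\pi_H$ and $\mc L_X\pi_V$ (forced by $\pi_\bullet^2=\pi_\bullet$) to identify each term. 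This is cleaner, makes the computation of $\pl_t\tilde U(t)v|_{t=0}=\hat U\dot A_2 v$ for $v\in V$ transparent, and surfaces the observation that $\hat U\dot A_1\equiv 0$ (because $\dot A_1$ has image in $V\subset\ker\underline{\hat U}$), a simplification the paper carries implicitly but never remarks on. The converse via orbit-wise ODE uniqueness and the smallness argument via $\mc L_X\pi_{E_u}=\mc L_X\pi_{E_{s0}}=0$ both match the paper's intent; your caveat that $\mc L_X\pi_{E_u}$ is only a Lie derivative along orbits, not a classical pointwise derivative, is a fair remark but not a gap, since the conjugated curve $t\mapsto d\varphi_{-t}\pi_{E_u,\varphi_t}d\varphi_t$ is constant and therefore trivially differentiable.
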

\begin{proof} 
Let us define $\hat{U}^t:=\Psi \Phi_t \Psi^{-1} \hat{U}$.
The subbundle $\hat{U}$ is invariant if and only if $\hat{U}^t(\varphi_{-t}(x))=\hat{U}(x)$ for all $x\in M$ and all $t\in\rr$. 
 We have 
\[\begin{split} 
\mc{L}_X\hat{U}=& \pl_t \Big( d\varphi_{-t}(\varphi_t(\cdot))\hat{U}(\varphi_t(\cdot))d\varphi_t(\cdot) \Big)\Big|_{t=0}
= \pl_t \Big( (\tilde{A}_3(t)+\tilde{A}_4(t)\hat{U})(\tilde{A}_1(t)+\tilde{A}_2(t)\hat{U})^{-1} \Big)\Big|_{t=0}\\
=& -\hat{U}\dot{A}_2\hat{U} -\hat{U}\dot{A}_1+\dot{A}_4\hat{U}+\dot{A}_3
\end{split}\]
with $\dot{A}_j:=\pl_t\tilde{A}_j(t;x)|_{t=0}$ and 
\[\begin{gathered}
\tilde{A}_3(t;x)=d\varphi_{-t}(\varphi_t(x))\pi_Vd\varphi_t(x)\pi_H, \quad  
\tilde{A}_4(t;x)=d\varphi_{-t}(\varphi_t(x))\pi_Vd\varphi_t(x)\pi_V\\
\tilde{A}_1(t;x)=d\varphi_{t}(x)^{-1}\pi_Hd\varphi_t(x)\pi_H, \quad  
\tilde{A}_2(t;x)=d\varphi_{t}(x)^{-1}\pi_Hd\varphi_t(x)\pi_V.
\end{gathered}\]
Fianlly, we choose $H\oplus V$ to satisfy \eqref{apprixamationpiH}, and we obtain the desired properties $\|\dot{A}_j\|_{C^0}=\mc{O}(\eps)$ for all $j$.
\end{proof}

\begin{theo}\label{Thregulrigid2}
Let $X$ be a smooth vector field generating an Anosov flow $\varphi_t$ preserving a smooth measure on $M$.
Let $\hat{U}\in C^\gamma(M;\mc{L}(V\otimes H^*))$ be the section parametrizing the bundle $E_u$ and $\nu_u^{\min},\nu_u^{\max},\nu_s^{\min},\nu_s^{\max}$ the minimal/maximal expansion rates of the flow, see \eqref{adaptedmetric}.\\ 
1) Then its wavefront set satisfies $\WF(\hat{U})\subset E_u^*$ and $\hat{U}$ has Sobolev 
regularity $H^{s}(M)$ near $E_u^*$ for all $s<(\nu_{u}^{\min}+\nu_{s}^{\min})/\nu_{s}^{\max}$.\\ 
2) If $\hat{U}\in H^s(M)$, and thus $E_u\in H^s$, for $s>(\nu_{u}^{\max}+\nu^{\max}_s)/\nu_s^{\min}$, then $\hat{U}\in C^\infty$ and $E_u\in C^\infty$.\\

In the case of a $C^\alpha$ vector field for $\alpha>1$, the results also holds just as in Theorem \ref{Thregulrigid1}.   
\end{theo}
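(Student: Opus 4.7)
The plan is to follow the same pipeline as in the proof of Theorem \ref{Thregulrigid1}, now with the matrix-valued Riccati equation \eqref{riccatinabla} on sections of the smooth bundle $V\otimes H^*$. I would first paralinearize \eqref{riccatinabla} using Proposition \ref{paraproduct} componentwise in local smooth frames: the quadratic term $\hat{U}\dot{A}_2\hat{U}$ yields paradifferential operators of order zero with principal symbols that are $\mc{O}(\eps)$ by the choice \eqref{apprixamationpiH} of $H,V$, plus a bilinear remainder in $H^{2\gamma'}(M;V\otimes H^*)$ for any $\gamma'<\gamma$. The equation becomes $P\hat{U}=f$ with
\[ P = -i\mc{L}_X + Z \in \prescript{1+\gamma}{}{\tilde{\Psi}}_{1,1}^1(M;V\otimes H^*), \]
where $Z\in\prescript{\gamma}{}{\tilde{\Psi}}_{1,1}^0$ has principal symbol $\mc{O}(\eps)$, and $f\in H^{2\gamma'}(M)$ is strictly more regular than $\hat{U}$.

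Next I would compute $\sigma(\Im P)\in \mathrm{End}(V\otimes H^*)$. The key observation is that the zero-order part of $\Im(-i\mc{L}_X)$ on the bundle encodes the infinitesimal deformation of the fiber metric on $V\otimes H^*$ under the flow, and hence under the block decomposition $V\simeq E_s\oplus\R X$ (perturbed by $\mc{O}(\eps)$ from \eqref{apprixamationpiH}) this endomorphism splits: on $E_s\otimes H^*$ the eigenvalues are of the form $-(\nu_s+\nu_u)$, while on $\R X\otimes H^*$ they are of the form $-\nu_u$, since $\mc{L}_XX=0$ and $H^*$ inherits the rate $-\nu_u$ from $H\approx E_u$. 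Integrating along the Hamilton flow $\Phi_t$ and using the Lyapunov bounds \eqref{adaptedmetric}, the eigenvalues of $\int_0^T\sigma(\Im P)\circ\Phi_t\, dt$ on the $E_s$-block lie in $[-T(\nu_u^{\max}+\nu_s^{\max}),-T(\nu_u^{\min}+\nu_s^{\min})]$, and on the $\R X$-block in $[-T\nu_u^{\max},-T\nu_u^{\min}]$, up to $\mc{O}(\eps T)$.

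The remainder now mirrors the three-dimensional case. Corollary \ref{estimeeell} gives ellipticity on $\{\xi(X)\neq 0\}$. At the radial source $L=E_s^*\cap\pl\bbar{T}^*M$ for $\Phi_t$, the averaged $\int_0^T H_{p_1}\log\cjg\xi\cjd\circ\Phi_t\, dt \in [-T\nu_u^{\max},-T\nu_u^{\min}]$, so the source threshold in Proposition \ref{sourceestimate} is satisfied with wide latitude; combined with Proposition \ref{propofsing} and iteration of Proposition \ref{WF(R)} to bootstrap the bilinear remainder, this gives $\WF(\hat{U})\subset E_u^*$. At the sink $L'=E_u^*\cap\pl\bbar{T}^*M$, the integrated $H_{p_1}\log\cjg\xi\cjd$ lies in $[T\nu_s^{\min},T\nu_s^{\max}]$, and Proposition \ref{sinkestimate} applies whenever the largest eigenvalue of $\sigma(\Im P)+s_0 H_{p_1}\log\cjg\xi\cjd\,\Id$ is negative on $L'$; the dominant $E_s$-block threshold $s_0<(\nu_u^{\min}+\nu_s^{\min})/\nu_s^{\max}$ yields part 1.

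For part 2, I would apply Proposition \ref{sourceestimate} to the reversed operator $P_-=-P$ at $L'$, now a source for $\Phi_{-t}$. Averaging under $\Phi_{-t}$, the largest eigenvalue of $-\sigma(\Im P)$ on the $E_s$-block is $\nu_u^{\max}+\nu_s^{\max}$ and $-\int H_{p_1}\log\cjg\xi\cjd$ contributes $\in[-T\nu_s^{\max},-T\nu_s^{\min}]$, giving the threshold $s_0>(\nu_u^{\max}+\nu_s^{\max})/\nu_s^{\min}$. A bootstrap on the improved regularity of the bilinear remainder, via Propositions \ref{paraproduct} and \ref{WF(R)}, then promotes $\hat{U}\in H^{s_0}$ to $\hat{U}\in C^\infty$; the $C^\alpha$ coefficient case follows via Lemma \ref{regularization}, as in Theorem \ref{Thregulrigid1}. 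The main obstacle I anticipate is the careful matrix-valued computation of $\sigma(\Im P)$ on the tensor bundle together with the $\mc{O}(\eps)$ bookkeeping, and the use of the bundle-valued sharp G\aa rding inequality (noted after Proposition \ref{Garding}) to convert matrix eigenvalue inequalities into the $L^2$-type estimates underlying Propositions \ref{sourceestimate} and \ref{sinkestimate}.
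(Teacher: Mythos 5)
Your overall strategy mirrors the paper's: paralinearize \eqref{riccatinabla} into $P\hat{U}=f$ with $P=-i\mc{L}_X+Z$ of order one, $Z$ a zero-order paradifferential term with $\mc{O}(\eps)$ principal symbol and $f$ strictly more regular; identify $\sigma(\Im P)$ with $\tfrac12\mc{L}_X{\bf g}+\mc{O}(\eps)$; then run ellipticity, source, sink, and propagation estimates with the paraproduct $\WF$-bootstrap. Your block decomposition of $\sigma(\Im P)$ on $V\otimes H^*\simeq (E_s\otimes H^*)\oplus(\R X\otimes H^*)$ with eigenvalues $\approx -(\nu_s+\nu_u)$ and $\approx -\nu_u$ is a finer analysis than the paper's and is correct: the $\R X$ factor contributes no contraction, so $\sigma(\Im P)$ is \emph{not} uniformly bounded above by $-(\nu_u^{\min}+\nu_s^{\min})+\mc{O}(\eps)$ on $V\otimes H^*$. (The paper's displayed estimate
$\mc{R}_u(h^*,h^*)\|v\|_g^2+\mc{R}_{s0}(v,v)\|h^*\|_g^2<(-2\nu_u^{\min}-2\nu_s^{\min}+\mc{O}(\eps))\|v\otimes h^*\|_g^2$
appears to fail for $v$ close to $\R X$, precisely because, as its own parenthetical notes, $\mc{R}_{s0}(X,X)=0$.)

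However, having identified the blocks, your deduction for part 1 is internally inconsistent. The sink estimate at $L'=E_u^*\cap\pl\bbar{T}^*M$ requires the \emph{largest} eigenvalue of $\sigma(\Im P)+sH_{p_1}\log\cjg\xi\cjd\,\Id$ to be eventually negative, and by your block diagonalization the largest eigenvalue of $\sigma(\Im P)$ is the $\R X\otimes H^*$ one, $-\nu_u^{\min}+\mc{O}(\eps)$. With $H_{p_1}\log\cjg\xi\cjd\leq\nu_s^{\max}$ on $L'$, the binding constraint is $s<\nu_u^{\min}/\nu_s^{\max}$, strictly smaller than the $E_s$-block quantity $(\nu_u^{\min}+\nu_s^{\min})/\nu_s^{\max}$ you write down. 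Calling the $E_s$-block ``dominant'' for the sink is backwards: the constraint comes from the block whose eigenvalue is \emph{least} negative. For part 2 the reasoning \emph{is} right, since there the relevant quantity is the largest eigenvalue of $-\sigma(\Im P)$, namely $\nu_u^{\max}+\nu_s^{\max}$, which does sit on the $E_s$-block and gives the stated threshold $(\nu_u^{\max}+\nu_s^{\max})/\nu_s^{\min}$. So you should either explain why the $\R X\otimes H^*$ component of $\hat{U}$ may be excluded from the sink analysis (generally it cannot, unless $E_u\oplus E_s$ is itself a smooth bundle, as in the contact case), or else revise the part 1 threshold to $\nu_u^{\min}/\nu_s^{\max}$.
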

\begin{proof}
We apply the same proof as Theorem \ref{Thregulrigid1} in dimension $3$ and do not repeat the argument but only the main necessary eventual negativity estimate to apply Propositions \ref{sourceestimate} and \ref{sinkestimate}. We can rewrite the equation \eqref{riccatinabla} using the paraproduct under the form 
\begin{equation}\label{paradiffeq2}
-i\mc{L}_X\hat{U}-i(\hat{U}\dot{A}_1-\dot{A}_4\hat{U}) -i(T_{\hat{U}}\dot{A}_2\hat{U}+ (T_{\hat{U}^\top}
\dot{A}_2^\top \hat{U}^\top)^\top)=iR(\hat{U},\hat{U})-i\dot{A}_3
\end{equation}
where $T_{\hat{U}}\in \prescript{\gamma}{}{\tilde \Psi}^{0}_{1,1}(M; \mc{L}(H,V))$ is the paradifferential operator of $\hat{U}$ and $R(\hat{U},\hat{U})\in H^{2\gamma'}(M)$ for all $\gamma'<\gamma$, while $\dot{A}_j\in C^\infty$, and we use $^\top$ to denote the transpose of the linear maps. 
Note that the right hand side then belongs to $H^{2\gamma}(M)$, thus more regular in Sobolev scale than $\hat{U}$. We write $P$ to be the operator appearing on the left hand side (applied to $\hat{U}$), its principal symbol is $p_1=\xi(X)$ as in dimension $3$. 
Let us compute the subprincipal term $\sigma({\rm Im}\, P)$: since $\|\dot{A}_j\|_{C^0}=\mc{O}(\eps)$ for all $j=1,\dots,4$, we have (using that $X$ is volume preserving)
\[{\rm Im}\, P=-\frac{1}{2}\sigma(\mc{L}_X^*+\mc{L}_X)+\mc{O}(\eps)=\frac{1}{2}\mc{L}_X{\bf g}+\mc{O}(\eps)\]
where we have used, for ${\bf g}$ the metric induced by $g$ on $TM\otimes T^*M$, that 
\[X({\bf g}(\hat{U}_1,\hat{U}_2))=\mc{L}_X{\bf g}(\hat{U}_1,\hat{U}_2)+{\bf g}(\mc{L}_X\hat{U}_1,\hat{U}_2)+{\bf g}(\hat{U}_1,\mc{L}_X\hat{U}_2).\] 
One has for $v\in V, h^*\in H^*$ with $\|v\|_g\leq 1$ and $\|h^*\|_g\leq 1$
\[\begin{split}
\mc{L}_X{\bf g}(v\otimes h^*,v\otimes h^*)=&
\pl_{t} (\|d\varphi_tv\|^2_{g_{\varphi_t}}
\times\|d\varphi_{t}^{-\top} h^*\|^2_{g_{\varphi_t}})|_{t=0}\\
=& 2\pl_{t} (\|d\varphi_t\pi_{E_{s0}}v\|_{g_{\varphi_t}})|_{t=0}+2\pl_{t} (\|d\varphi_{t}^{-\top} \pi_{E_{u}}^\top h^*\|_{g_{\varphi_t}})|_{t=0}+\mc{O}(\eps).
\end{split}\]
with the notation $A^{-\top}=(A^\top)^{-1}$.
Consider the quadratic forms 
\[\mc{R}_u(h^*,h^*)=2\pl_{t} (\|d\varphi_{t}^{-\top}\pi_{E_{u}}^\top h^*\|_{g_{\varphi_t}})|_{t=0}\|h^*\|_g, \quad
\mc{R}_{s0}(v,v)=2\pl_{t} (\|d\varphi_{t}\pi_{E_{s0}}v\|_{g_{\varphi_t}})|_{t=0}\|v\|_g\] 
on respectively $T^*M$ and $TM$. By \eqref{adaptedmetric}, we obtain that (recall $\pl_t\|d\varphi_t X\|_g=0$)
\[ \mc{R}_u(h^*,h^*)\|v\|_g^2+\mc{R}_{s0}(v,v)\|h^*\|^2_g< (-2\nu^{\min}_u-2\nu^{\min}_s+\mc{O}(\eps))\|v\otimes h^*\|_g^2 \]
for $\eps>0$ small. Note also, using again \eqref{adaptedmetric}, that for $\xi\in E_s^*$ and $p_1=\xi(X)$
\[ H_{p_1}\log \cjg \xi\cjd=\pl_t (\log \cjg \xi \circ \Phi_t\cjd)|_{t=0}\leq -\nu^{\min}_u+\eps\]
The negativity of the subprincipal term needed to apply Proposition \ref{sourceestimate} at the source $E_s^*\cap\pl\bbar{T}^*M$ is satisfied for all $s>0$:
\begin{align*}
\frac{1}{2}(\mc{R}_u(h^*,h^*)\|v\|_g^2+\mc{R}_{s0}(v,v)\|h^*\|^2_g)+sH_{p_1}\log \cjg \xi\cjd \|v\otimes h^*\|^2_g +\mc{O}(\eps)\\
<-(\nu_{u}^{\min}+\nu^{\min}_s+s\nu_u^{\min} +\mc{O}(\eps))\|h^*\|_g^2\|v\|_g^2
\end{align*}
Since $\eps>0$ is arbitrarily small, the proof of Theorem \ref{Thregulrigid1} then shows that $\WF(\hat{U})\subset E_u^*$.

The regularity of $\hat{U}$ at $E_u^*$ can be obtained by Proposition \ref{sinkestimate} by considering the maximal $s>0$ so that the subprincipal term on $L'=E_u^*\cap \pl\bbar{T}^*M$
\begin{align*}
\frac{1}{2}(\mc{R}_u(h^*,h^*)\|v\|_g^2+\mc{R}_{s0}(v,v)\|h^*\|^2_g)+sH_{p_1}\log \cjg \xi\cjd \|v\otimes h^*\|^2_g +\mc{O}(\eps)\\
<-(\nu_{u}^{\min}+\nu^{\min}_s+s\nu_s^{\max} +\mc{O}(\eps))\|h^*\|_g^2\|v\|_g^2
\end{align*}
is negative. Taking $\eps>0$ sufficiently small, we see that $s=(\nu_{u}^{\min}+\nu_{s}^{\min})/\nu_{s}^{\max}$ is a lower bound on the threshold.

For the rigidity, we reverse the direction of the flow so that $E_u^*$ becomes the source and the threshold condition becomes (as in the proof of Theorem \ref{Thregulrigid1}) on $L'$ that
\begin{align*}
-\frac{1}{2}(\mc{R}_u(h^*,h^*)\|v\|_g^2+\mc{R}_{s0}(v,v)\|h^*\|^2_g)+sH_{p_1}\log \cjg \xi\cjd \|v\otimes h^*\|^2_g +\mc{O}(\eps)\\
<(\nu_{u}^{\max}+\nu^{\max}_s-s\nu_s^{\min} +\mc{O}(\eps))\|h^*\|_g^2\|v\|_g^2
\end{align*}
is negative, which is satisfied, if $\eps>0$ is chosen small enough, 
when $s>(\nu_{u}^{\max}+\nu^{\max}_s)/\nu_s^{\min}$. In that case, the bootstrap argument can be performed and we obtain that $\hat{U}$ is smooth.
\end{proof}

\begin{rem}\label{Remarquegenerale} As mentionned in the introduction, this method also gives directly some regularity and rigidity statements of the same kind for general Riccati equations 
\[\mc{L}_XU+Q(x,U)=0\]
where  $\mc{L}_X$ is the Lie derivative in the direction of a smooth Anosov vector field, $Q$ is a quadratic polynomial in $U$ (or even more generally a smooth functional) depending smoothly on $x$, $U\in C^r(M;{\rm End}(E))$ is a H\"older section of some smooth Hermitian bundle $(E,g)$ equipped with a natural lifted action $\tilde{\varphi}_t: E_{x}\to E_{\varphi_t(x)}$ of the flow $\varphi_t$ on $E$ that is linear in the fibers. Indeed, using Bony's paralinearization \cite[Proposition 4.4., Theor\`eme 4.5]{Bony1981}, the equation can be replaced by 
\[ \mc{L}_XU+\pl_{U}Q(x,U).U=R(U)\]
for some $R(u)\in C^{2r}(M;{\rm End}(E))$. We then see that the radial point condition to apply Proposition \ref{sourceestimate} near the source $L=E_s^*\cap \pl\bbar{T}^*M$ can be written in terms of the condition 
\begin{equation}\label{quanta} 
\frac{1}{2}(\mc{L}_X{\bf g}+{\rm div}(X))-\frac{1}{2}(\pl_{U}Q(x,U)+\pl_{U}Q(x,U)^*) dt -s\nu_u^{\min}<0
\end{equation} 
where ${\bf g}$ is the metric on ${\rm End}(E)$ induced by a metric $g$ on $E$, and the adjoint is taken with respect to the metric $g$. As before, $\mc{L}_X{\bf g}$ can be expressed in terms of expansion rates of the lifted flow $\tilde{\varphi}_t$ on the bundle $E$, more precisely as 
$\pl_t \|\tilde{\varphi}_t\|_{{\bf g}}|^2_{t=0}+\pl_t \|\tilde{\varphi}_t^{-\top}\|^2_{{\bf g}}|_{t=0}$.
The radial sink estimate can be applied provided 
\begin{equation}\label{quanta2}
- \frac{1}{2}(\mc{L}_X{\bf g}+{\rm div}(X))+\frac{1}{2}(\pl_{U}Q(x,U)+\pl_{U}Q(x,U)^*) dt -s\nu_s^{\min}<0.
\end{equation}
If $E=\R$ is the trivial bundle, one can replace this condition by $\int_0^T a\circ \varphi_t dt<0$ for large $T>0$ where $a$ is the quantity in \eqref{quanta} and \eqref{quanta2} without the term $\mc{L}_X{\bf g}$.
\end{rem}

\section{Ruelle resonances for non-smooth potentials}\label{nonsmoothpotentials}

The notion of Ruelle resonances for non-smooth Anosov flows has been defined 
by Butterley-Liverani \cite{BL07}, it was previously done  for hyperbolic diffeomorphisms
by Baladi-Tsujii \cite{BT07} and Gou\"ezel-Liverani \cite{GL08} (including non-smooth potentials in that case). 
A more recent work for has been done by Adam-Baladi \cite{Adam-Baladi} to deal with both non-smooth Anosov flows and potentials.
Here we give a short application of the paradifferential calculus mentionned above 
to the study of $P=-X+V$ where  $X$ is a smooth Anosov flow on a compact manifold $M$ 
and $V\in C^r(M)$ a H\"older potential. For simplicity we will assume that there is a smooth invariant measure $\mu$. 
The motivation of considering non-smooth potentials comes from the fact that the geometric potentials such as the unstable Jacobian $V=J_u:=\pl_t(\log \det( d\varphi_t|_{E_u}))|_{t=0}$
are never smooth except for locally symmetric spaces.
One can also consider  non-smooth flows using this technique, we refer to the Appendix by Guedes Bonthonneau for the more general result.  

By Faure-Sj\"ostrand \cite{FS11}, there is $C_0>0$ such that for all $u<0<s$, for all $\eps>0$ small, there is a smooth order function $m\in S^0(M; [u,s])$ of order $0$ on $T^*M$ such that, if $H_p$ is the Hamilton flow of $p(x,\xi)=\xi(X)$, then 
\[H_p m\geq 0, \textrm{on }M, \quad m(x,\xi)=s \textrm{ near }E_s^*,\quad m(x,\xi)=u \textrm{ near }E_u^*\]
with $s>0>u$
and a function $f:T^*M\to \R^+$ homogeneous of degree $1$ for $|\xi|>1$, so that if $G:=m\log f$, one has for $\eps>0$ arbitrarily small,\footnote{The constant $c$ in \cite[eq 1.19]{FS11} is readily checked to be what we give here, for example using an adapted metric as in the previous Section.}
\begin{align}\label{HpG}
& H_pG\leq -\min(\nu_u^{\min}|u|,s\nu_s^{\min})+C_0\eps(|u|+s) \textrm{ outside a conic neighborhood of }E_0^*\\
& H_pG\leq 0 \textrm{ on }T^*M.\nonumber
\end{align}
Here $\nu_u^{\min},\nu_s^{\min}$ are the minimal expansion rates of the flow in $E_u,E_s$.
The anisotropic Sobolev space  is defined in \cite{FS11} by 
\begin{equation}\label{anisotsp}
\mc{H}^{s,u}:={\rm Op}(e^{-G})L^2(M).
\end{equation}

\begin{prop}\label{nonsmoothpotential}
Let $X$ be a smooth Anosov vector field with flow $\varphi_t$ preserving a smooth mesure, $V\in C^r(M)$. There is $C_0>0$ such that for $u<0<s$ so that $s+|u|<r$, all $\eps>0$, the operator $P=-X+V$ has only discrete spectrum  on the Hilbert space $\mc{H}^{s,u}$ defined in \eqref{anisotsp}, in the region 
\[{\rm Re}(\lambda)>-\min(\nu_u^{\min}|u|,s\nu_s^{\min})+\sup_{x}\limsup_{T\to \infty} \frac{1}{T}\int_0^T {\rm Re}(V)\circ \varphi_t(x)\, dt+C_0\eps(|u|+s)\] 
with meromorphic resolvent there, and $P-\lambda: {\rm Dom}_{\mc{H}^{s,u}}(P)\to \mc{H}^{s,u}$ is Fredholm in that region.
\end{prop}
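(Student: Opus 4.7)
I would follow the Faure--Sj\"ostrand scheme \cite{FS11}, adapted to the non-smooth potential via the paradifferential calculus of Section \ref{Para}. Let $E:=\mathrm{Op}(e^G)$; it is a classical elliptic pseudodifferential operator of variable order realising an isomorphism $\mc H^{s,u}\to L^2(M)$, so it suffices to exhibit, on $L^2(M)$ and in the half-plane of the statement, an \emph{a priori} Fredholm estimate
\[
\|\varphi\|_{L^2(M)}\leq C\|Q(\lambda)\varphi\|_{L^2(M)}+C\|K\varphi\|_{L^2(M)}
\]
for $Q(\lambda):=E(P-\lambda)E^{-1}$ and some compact operator $K$; an analogous estimate for the adjoint, together with analytic Fredholm theory (invertibility of $Q(\lambda)$ for $\Re\lambda$ large by Neumann series), will then yield Fredholmness of index $0$ and the meromorphic resolvent.

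I would first paralinearise $V=T_V+V^\flat$ by Lemma \ref{regularization}, with $T_V\in\prescript{r}{}{\tilde\Psi}^0_{1,1}(M)$ and $V^\flat:H^{s'-r}(M)\to H^{s'}(M)$ bounded for every $s'\in(0,r)$. Since $s+|u|<r$, the operator $EV^\flat E^{-1}$ sends $L^2(M)$ into some $H^\eta(M)$ with $\eta>0$, hence is compact on $L^2$; absorb it into $K$. The remaining piece $P^\sharp:=-X+T_V$ lies in $\prescript{r}{}{\tilde\Psi}^1_{1,1}(M)$ with real principal symbol $p(x,\xi)=\xi(X)$. Using the classical calculus for $E$, Proposition \ref{composePara}, and the hypothesis $X^*=-X$ (smooth invariant measure), a short computation gives, modulo compact operators on $L^2(M)$,
\[
\tfrac{1}{2}\bigl(Q(\lambda)+Q(\lambda)^*\bigr)\;=\;\Re V-H_pG-\Re\lambda+R,
\]
where $R\in\prescript{r-1}{}{\tilde\Psi}^{-(r-1)}_{1,1}(M)$ is compact by Proposition \ref{boundedness} (using $r>1$). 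The key inequality \eqref{HpG} then yields $-H_pG\geq \min(\nu_u^{\min}|u|,s\nu_s^{\min})-C_0\eps(|u|+s)$ outside a conic neighbourhood $\mathcal U$ of $E_0^*$, and $-H_pG\geq 0$ everywhere.

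To replace the trivial bound on $\Re V$ by the Birkhoff-average bound of the statement, I would perform a cohomological substitution. For each $\delta>0$, approximate $\Re V$ in $L^\infty(M)$ by a smooth $W$ with $\|W-\Re V\|_\infty<\delta$; then, by a standard long-orbit averaging argument for the smooth Anosov flow $\varphi_t$, find $\psi_\delta\in C^\infty(M;\R)$ with $W-X\psi_\delta \leq \bigl\|\langle\Re V\rangle\bigr\|_{L^\infty}+\delta$, where $\langle\Re V\rangle(x):=\limsup_{T\to\infty}\tfrac{1}{T}\int_0^T\Re V\circ\varphi_t\,dt$. Conjugating $P$ by the smooth multiplier $e^{\psi_\delta}$, which preserves $\mc H^{s,u}$, substitutes $V$ by $V-X\psi_\delta$ and reduces matters to the case $\Re V\leq \|\langle\Re V\rangle\|_{L^\infty}+2\delta$ pointwise.

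Finally, for $\Re\lambda$ in the stated half-plane, the symbol in the displayed identity is $\leq -c_0<0$ on $\mathcal U^c$; the sharp G{\aa}rding inequality of Proposition \ref{Garding} then provides the required estimate microlocally on $\mathcal U^c$, and the propagation-of-singularities of Proposition \ref{propofsing} transports it through $\mathcal U$ along the Hamilton flow of $p$, using that, by hyperbolicity near the flow-invariant set $E_0^*$, every trajectory starting in $\mathcal U\setminus E_0^*$ exits $\mathcal U$ in forward or backward time into the elliptic zone. The main obstacle, as I see it, is the cohomological substitution: one must combine a smooth $L^\infty$ approximation of the H\"older $V$ with a Liv\v{s}ic--averaging construction of the smooth transfer function $\psi_\delta$. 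The subsidiary verifications, namely compactness of $EV^\flat E^{-1}$ and of the subprincipal remainder $R$, are routine in the calculus of Section \ref{Para} given $r>1$ and $s+|u|<r$.
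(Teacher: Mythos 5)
Your overall scheme — conjugate with the anisotropic weight, paralinearize $V$, reduce $\Re V$ by a smooth coboundary, and close the Fredholm estimate with G\aa rding plus ellipticity — is the same as the paper's (and the paper's own cohomological step is exactly the $e^b$-conjugation you carry out with $\psi_\delta$). However, your key displayed identity carries a sign error that breaks the argument. With $E=\Op(e^G)\approx\Op(e^{-G})^{-1}$ and $Q(\lambda)=E(P-\lambda)E^{-1}$, one has $E(-X)E^{-1}=-X+\Op(H_pG)+\text{(lower order)}$, because $\sigma\bigl(\Op(e^{-G})^{-1}[X,\Op(e^{-G})]\bigr)=-H_pG$. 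Using $X^*=-X$ (volume preservation), this gives
\[
\tfrac12\bigl(Q(\lambda)+Q(\lambda)^*\bigr)=\Re V+H_pG-\Re\lambda+R,
\]
not $\Re V-H_pG-\Re\lambda+R$ as you wrote. Since $H_pG\leq 0$ everywhere and $H_pG\leq -\min(\nu_u^{\min}|u|,s\nu_s^{\min})+C_0\eps(|u|+s)$ off a conic neighbourhood of $E_0^*$, it is the correct sign $+H_pG$ that produces the upper bound $\leq \Re V-\min(\cdots)+C_0\eps(\cdots)-\Re\lambda$ off $E_0^*$ needed for G\aa rding in the stated half-plane. With your sign, $-H_pG\geq 0$, so the real part is bounded \emph{below} rather than above, the symbol is not $\leq -c_0$ on $\mathcal U^c$, and you would end up with the threshold $\Re\lambda>\|\langle\Re V\rangle\|_\infty + \min(\cdots)$, the wrong side of $\min$. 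This must be fixed for the proof to go through.

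A secondary, presentational issue: your handling of the conic neighbourhood $\mathcal U$ of $E_0^*$ via propagation of singularities is misdirected. $E_0^*\setminus\{0\}$ lies in the elliptic set of $p=\xi(X)$ (for $\xi\in E_0^*\setminus 0$, $\xi(X)\neq 0$), so on $\mathcal U$ one uses the elliptic estimate of Corollary \ref{estimeeell}, not Proposition \ref{propofsing}; trajectories that ``exit $\mathcal U$'' are leaving the elliptic zone, not entering it, and the radial trapped sets $E_u^*,E_s^*$ are disjoint from $E_0^*$. The paper instead adds a penalty term $C_{s,u}\Op(\chi_0^2)$ supported near $E_0^*$, applies G\aa rding to the penalized operator, and then removes the penalty by ellipticity; either route works, but ``hyperbolicity near $E_0^*$'' is not the mechanism. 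Finally, the coboundary construction of $\psi_\delta$ is better attributed to the subadditive/averaging argument (take $\psi_T=\tfrac1T\int_0^T(T-t)W\circ\varphi_t\,dt$ and let $T\to\infty$) rather than Liv\v sic; this is indeed what the paper invokes through Lemma \ref{approximationb}.
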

\begin{proof} First, notice that, using the same argument as in Lemma \ref{approximationb}, we can first conjugate $-X+V$ by $e^{b}$ for some $b\in C^\infty(M)$ so that we can replace 
$V$ by $V_0\in C^\infty(M)$ such that 
\[{\rm Re}(V_0)\leq \sup_x\limsup_{T\to \infty} \frac{1}{T}\int_0^T {\rm Re}(V)\circ \varphi_t(x)\, dt +\eps_0\]
for $\eps_0>0$ arbitrarily small. For simplicity, we keep the notation $V$ instead of $V_0$ in what follows.
Using Lemma \ref{regularization}, write $P=P^\sharp+V^b$ where $P^\sharp=-X+T_V$ is the paradifferential operator of $P$: one has $T_V\in \prescript{r}{}{\tilde \Psi}^0_{1,1}(M)$ and $V^\flat\in \Psi^{-r}_{1,1}(M)$ maps continuously
\[ V^\flat: H^{\alpha-r}(M)\to H^{\alpha}, \quad \forall \alpha\in (0,r).\]
It then suffices to proceed exactly as in \cite{FS11}. Consider 
\[\tilde{P}:= {\rm Op}(e^{-G})^{-1}P^\sharp{\rm Op}(e^{-G})+ {\rm Op}(e^{-G})^{-1}V^\flat {\rm Op}(e^{-G})=: \tilde{P}^\sharp + \tilde{V}^\flat.\]
Since ${\rm Op}(e^{-G}):L^2(M)\to H^{u}(M)$ and 
${\rm Op}(e^{-G})^{-1}:H^{u+r}(M)\to H^{u+r-s}(M)$ (eg. see \cite{FRS08,FS11}), we deduce that (recall $T_V={\rm Op}(V^\sharp)$)
\begin{equation}\label{tildeV}
\tilde{V}^\flat : L^2(M) \to H^{u+r-s}(M)
\end{equation} 
continuously. In particular it is compact on $L^2$ if $r>s+|u|$. Next we analyze $\tilde{P}^\sharp$. The term ${\rm Op}(e^{-G})^{-1}P^\sharp{\rm Op}(e^{-G})$ is studied in \cite{FS11} and gives\footnote{We use that $X^*=-X$ on $L^2(M,d\mu)$ since $X$ preserves a smooth measure.} for all $\delta>0$ small
\[{\rm Op}(e^{-G})^{-1}P^\sharp{\rm Op}(e^{-G})=-X-{\rm Op}(H_pG)+\mc{O}_{\Psi^{-1+\delta}(M)}(1)\]
To analyze the term ${\rm Op}(e^{-G})^{-1}P^\sharp{\rm Op}(e^{-G})$, we can use Proposition \ref{expansionsymbol} and the fact that $e^{-G}\in S^{|u|}_{1-\delta,\delta}(M)$ for all $\delta>0$
to deduce that 
\[\begin{gathered}
{\rm Op}(V^\sharp){\rm Op}(e^{-G})={\rm Op}(V_1)+\mc{O}_{\Psi^{-\infty}(M)}(1),\quad  {\rm Op}(e^{-G})^{-1}{\rm Op}(V_1)={\rm Op}(V_2)+\mc{O}_{\Psi^{-\infty}(M)}(1) \\
V_1(x,\xi)=\sum_{\la\alpha\ra<\ceil{r}}\frac1{\alpha!}\partial^\alpha_\xi V^\sharp(x,\xi)D^\alpha_xe^{-G(x,\xi)}+\mc{O}_{S^{|u|-r}_{1,1}(M)}(1)\in \prescript{r}{}{\tilde S}^{|u|}_{1,1}(M)\\
V_2(x,\xi)=\sum_{\la\alpha\ra<\ceil{r}}\frac1{\alpha!}\partial^\alpha_\xi (e^{G(x,\xi)}F) D^\alpha_xV_1(x,\xi)+\mc{O}_{S^{|u|+s-r}_{1,1}(M)}(1)\in \prescript{r}{}{\tilde S}^{|u|+r}_{1,1}(M)
\end{gathered}\]
where we wrote ${\rm Op}(e^{-G})^{-1}={\rm Op}(e^GF)+\mc{O}_{\Psi^{-\infty}(M)}(1)$  for some $F\in S^0(M)$. It is readily seen that for all $\delta>0$
\[V_2(x,\xi)=V^\sharp(x,\xi)+\mc{O}_{S_{1,1}^{-1+\delta}(M)}(1)+\mc{O}_{S_{1,1}^{s+|u|-r}(M)}(1).\]
Thus we get 
\[\tilde{P}^\sharp=-X-{\rm Op}(H_pG)+{\rm Op}(V^\sharp)+\mc{O}_{\Psi^{-\gamma}_{1,1}(M)}(1)\]
for all $\gamma<\min(r-s-|u|,1)$. Now we can apply the proof of \cite[Lemma 3.3,3.4,3.5]{FS11}
verbatim, noting that the only needed fact is the G\aa rding inequality Proposition \ref{Garding}:
the principal symbol of the subprincipal term ${\rm Im}(i(\tilde{P}^\sharp-\lambda))=-\frac{1}{2}((\tilde{P}^\sharp)^*+\tilde{P}^\sharp)-{\rm Re}(\lambda)$ satisfies in $\{|\xi|>R\}$ for $R\gg 1$ large enough
\[\begin{split}
\sigma({\rm Im}(i(\tilde{P}^\sharp-\lambda)))= & H_PG+{\rm Re}(V)-{\rm Re}(\lambda)+o(1)\\
\leq & -\min(\nu_u^{\min}|u|,s\nu_s^{\min})(1-\chi_0^2)+{\rm Re}(V)-{\rm Re}(\lambda)+C_0\eps(|u|+s)
\end{split}\]
for some $\chi_0\in S^0(M)$ supported in a conic neighborhood of $E_0^*$. This term is 
bounded above by $-C_0\eps(|u|+s)/2+C_{s,u}\chi_0^2$ for some $C_{s,u}>0$ if $\lambda \in \Omega:=\{{\rm Re}(\lambda)>-\min(\nu_u^{\min}|u|,s\nu_s^{\min})+{\rm Re}(V)+C_0\eps(|u|+s)/2\}$, thus 
by Proposition \ref{Garding} 
\[\cjg {\rm Im}(i(\tilde{P}^\sharp-\lambda+C_{s,u}{\rm Op}(\chi_0^2)))u,u\cjd_{L^2}\leq -\frac{C\eps}{2}\|u\|_{L^2}^2+C'\|u\|_{H^{-r/4+\eps}}^2.\]
for some constant $C'>0$.
Combining with \eqref{tildeV}, we obtain an estimate for some $\delta>0$ depending on $r,s,u$ and all $\lambda\in \Omega$
\[|\cjg {\rm Im}(\tilde{P}-\lambda+C_{s,u}{\rm Op}(\chi_0^2))u,u\cjd_{L^2}|\geq \frac{C\eps}{2}\|u\|_{L^2}^2-C'\|u\|_{H^{-\delta}}^2.\]
A similar estimate holds for the adjoint of $\tilde{P}$, which implies by standard argument that $\tilde{P}-\lambda+C_{s,u}{\rm Op}(\chi_0^2)$ is Fredholm on $\mc{H}^{s,u}$, and using ellipticity of $X$ on ${\rm supp}(\chi_0)$, this also implies that $\tilde{P}-\lambda$ is Fredholm on $\mc{H}^{s,u}$; see again (see again \cite[Section 3.2,3.3]{FS11} for details). The fact that it has index $0$ is easily shown by taking ${\rm Re}(\lambda)\gg 1$ large, where $\tilde{P}-\lambda$ is invertible. 
\end{proof}

\appendix

\section{Faure-Sj\"ostrand spaces in finite regularity}
\smallskip
\begin{center}\textsc{Yannick Guedes Bonthonneau}\end{center}
\medskip

Since the start of the XXIth century, anisotropic Banach spaces of distributions are a main stay of the study of dynamics and spectral theory of Anosov flows. They are used to define and study the so-called Pollicott-Ruelle resonances, whose definition we will recall. Such spaces can be constructed with several different techniques, and we refer to \cite{baladi-ultimate} for a glimpse of the mathematical landscape. One particular construction originated in the article \cite{FS11}, presented such spaces in the form
\[
\mathcal{H}_G(M) := e^{\Op(G)} L^2(M).
\]
Here, $G$ is a symbol in a $\log$ class, suitably chosen. This kind of space is particularly amenable to the use of microlocal techiques. As can be expected, such techniques are more effective when dealing with smooth systems. Indeed, the Faure-Sj\"ostrand construction has so far only been used in the case that the flow is $C^\infty$. However, since its inception, the theory of Anosov flows is intended to deal with flows of finite regularity. Our purpose here is to show that, just as several other constructions, the Faure-Sj\"ostrand spaces can be used to deal with $C^r$ Anosov flows, provide $r>1$.

Given such a vector field $X$, and a $C^{r-1}$ potential $V$, let us consider
\[
s_0 := \inf \{ s\in \R \ |\ X+V - z \text{ is Fredholm on $L^2$ for } \Re z \geq s\}.
\]
Certainly, the spectrum of $X+V$ acting on $L^2$ is discrete in $\{ z\ |\ \Re z > s_0 \}$. It is not quite clear in general how to access the value of $s_0$. Let us consider
\[
s_1=s_1(X+V) := \limsup_{t\to +\infty} \frac{1}{t} \log \| e^{t(X+V)} \|_{L^2}.
\]
This is the abscissa of convergence of the RHS in the formula
\[
(X+V-s)^{-1} = \int_0^{+\infty} e^{t(X+V-s)} dt.
\]
Certainly, $s_1 \geq s_0$. We will prove the following:
\begin{theo}\label{nonsmoothflows}
Let $X$ be a $C^r$ Anosov flow, and $V$ a $C^{r-1}$ potential, with $r>1$. Then there exists $\delta>0$ and $G$ such that $P$ acts on $\mathcal{H}_G$, and has discrete spectrum in $\{ s\in\C \ |\ \Re s > s_1(X+V) - \delta \}$.

More precisely, if $\lambda_u$ (resp. $-\lambda_s$) is the slowest positive (resp. negative) Lyapunov exponent of the flow, we can choose $G$ so that $\delta$ is arbitrarily close to
\[
(r-1)\frac{\lambda_u \lambda_s}{\lambda_u+\lambda_s}.
\]
\end{theo}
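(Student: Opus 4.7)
The plan is to adapt Faure--Sj\"ostrand's microlocal framework to the $C^r$ setting by replacing the standard quantization of $X+V$ by its paradifferential companion, and to handle the rough error via the mapping properties of Lemma \ref{regularization}. More precisely, write $P=X+V$. By Lemma \ref{regularization} applied to $-X+V\in \prescript{r}{}{\rm Diff}^1(M)$, decompose
\[ -X+V = P^\sharp + R^\flat, \qquad P^\sharp := T_{\,p+V}\in \prescript{r}{}{\tilde\Psi}^{1}_{1,1}(M),\]
where $p(x,\xi)=-\xi(X(x))$, and $R^\flat:H^{\alpha+1-r}(M)\to H^\alpha(M)$ continuously for every $\alpha\in(0,r)$. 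The principal symbol of $P^\sharp$ is the (rough but well-defined on $\partial\bbar T^*M$) function $p$.

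First I would invoke the escape function construction of Guedes Bonthonneau \cite{Bon18}, which produces, for the $C^r$ Anosov Hamiltonian vector field $H_p$, a \emph{smooth} order function $m\in S^0(M;[u,s])$ with $s>0>u$ taking the value $s$ near $E_s^*$ and $u$ near $E_u^*$, such that $G:=m\log f$ (with $f\in C^\infty(T^*M,\R^+)$ homogeneous of degree $1$ for large $|\xi|$) satisfies $H_pG\leq 0$ everywhere, and
\[ H_p G \leq -\min\bigl(|u|\lambda_u,\, s\lambda_s\bigr) + C_0\epsilon(|u|+s)\]
outside a small conic neighborhood of $E_0^*$. Since $G$ is smooth, $\mathrm{Op}(e^{-G})\in \Psi^0_{1-\delta,\delta}$ is elliptic, $\mathcal{H}_G:=\mathrm{Op}(e^{-G})L^2(M)$ is well-defined, and $\mathrm{Op}(e^{\pm G})$ intertwine anisotropic and classical Sobolev scales as in \cite{FS11}.

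Next I would conjugate and split:
\[ \widetilde P := \mathrm{Op}(e^{-G})^{-1}(-X+V)\mathrm{Op}(e^{-G}) = \widetilde{P^\sharp} + \widetilde{R^\flat}.\]
Because $P^\sharp\in \prescript{r}{}{\tilde\Psi}^{1}_{1,1}(M)$, Proposition \ref{expansionsymbol} lets me expand $\widetilde{P^\sharp}$ exactly as in the proof of Proposition \ref{nonsmoothpotential}: one obtains
\[\widetilde{P^\sharp} = -X+T_V + \mathrm{Op}(H_p G) + \mathcal{O}_{\tilde\Psi^{-\gamma}_{1,1}}(1),\]
for some $\gamma>0$ depending on $r,s,|u|$. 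For the remainder, the boundedness $\mathrm{Op}(e^{-G})^{-1}:H^{\alpha}\to H^{\alpha-s}$ and $\mathrm{Op}(e^{-G}):L^2\to H^{-|u|}$, combined with the gain $R^\flat:H^{-|u|+1-r}\to H^{-|u|}$, yield
\[ \widetilde{R^\flat}:L^2(M)\longrightarrow H^{\,r-1-s-|u|}(M),\]
which is compact on $L^2$ as soon as the threshold condition
\begin{equation}\label{thresholdplan}
 s+|u| < r-1
\end{equation}
is satisfied. This is exactly the role that was played by the condition $s+|u|<r$ in Proposition \ref{nonsmoothpotential}, but shifted by one order because the roughness now sits in the vector field itself rather than the potential.

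With these pieces in hand, the proof proceeds as in \cite{FS11}. The subprincipal symbol of $\Im\bigl(i(\widetilde{P^\sharp}-\lambda)\bigr)$ is
\[ H_p G + \Re V - \Re\lambda \le -\min(|u|\lambda_u,s\lambda_s) + \Re V - \Re\lambda + C_0\epsilon(|u|+s)\]
outside a small neighborhood of $E_0^*$, and Proposition \ref{Garding} (sharp G\aa rding) combined with the ellipticity of $X$ where $H_pG$ degenerates produces a Fredholm estimate on $\mathcal{H}_G$ for $\Re\lambda$ strictly larger than $\Re V - \min(|u|\lambda_u,s\lambda_s) + O(\epsilon)(|u|+s)$; the compact remainder $\widetilde{R^\flat}$ does not spoil the estimate thanks to \eqref{thresholdplan}. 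A standard argument (taking $\Re\lambda$ large, where the operator is invertible by integration of the semigroup on $L^2$) shows the index is zero, hence discrete spectrum with meromorphic resolvent in that half plane. To convert $\Re V$ into $s_1(X+V)$ I would pre-conjugate by $e^b$ with $b\in C^\infty$ chosen so that $X b + \Re V$ is as close as desired to $\limsup_{T\to\infty}\tfrac{1}{T}\int_0^T\Re V\circ\varphi_t\, dt$ uniformly on $M$ (Livshits-type smoothing using Lemma \ref{approximationlemma}), and then observe that this $L^\infty$-bounded cohomological average coincides with $s_1$ after noting that the semigroup growth on $L^2$ is governed by that average (the flow preserves volume, up to $C^{r-1}$ divergence corrections absorbed in $V$).

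The final step is the optimization of the gain. For fixed $r$, I maximize $\min(|u|\lambda_u,s\lambda_s)$ subject to the compactness constraint \eqref{thresholdplan}. The optimum lies on the diagonal $|u|\lambda_u = s\lambda_s$ with $|u|+s = r-1-\epsilon'$, giving
\[ \delta = \frac{(r-1)\lambda_u\lambda_s}{\lambda_u+\lambda_s} - O(\epsilon+\epsilon'),\]
which is the bound announced. The main technical obstacle is the proper handling of $\widetilde{R^\flat}$: one must keep careful track of how $\mathrm{Op}(e^{\pm G})$ in the $S^{\cdot}_{1-\delta,\delta}$ class interacts with the $\Psi^{-r}_{1,1}$ remainder produced by paralinearization of the non-smooth $X$, and verify that the resulting operator is compact on $L^2$ up to a uniform small error; this is precisely what fixes the constraint $s+|u|<r-1$ and hence the value of $\delta$.
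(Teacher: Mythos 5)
Your proposal is essentially correct and follows the same basic architecture as the paper's proof: paralinearize $P=-X+V$ so that the roughness becomes a remainder $R^\flat$ gaining $r-1$ derivatives, conjugate by $\mathrm{Op}(e^{-G})$ using a smooth escape function $G$, observe that the remainder becomes compact on $L^2$ under the threshold $s+|u|<r-1$, apply the microlocal Fredholm machinery, and optimize to reach $\delta\approx(r-1)\lambda_u\lambda_s/(\lambda_u+\lambda_s)$. However, your treatment glosses over what the paper explicitly identifies as \emph{the only genuinely delicate point}: the construction of the smooth order function $m$ itself. For a $C^\infty$ Anosov flow, \cite{FS11} averages along the (smooth) flow on $\partial\bbar T^*M$ to manufacture $m$; but when $X$ is only $C^r$, that flow is only $C^{r-1}$ on $\partial\bbar T^*M$ and the naive averaging produces a non-smooth $m$, which would destroy the $S^0$ symbolic class needed for everything downstream. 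You invoke \cite{Bon18} as if it hands you a smooth $m$ adapted to a $C^r$ vector field, but the actual mechanism there --- and the one the paper reproduces --- is a perturbative, structural-stability argument: one picks a smooth $X_0$ with $\|X-X_0\|_{C^1}$ small, builds the escape function for $X_0$, and then checks (by verifying uniformity of the hyperbolicity constants, cone angles, and $C^1$ bounds) that this same smooth $m$ is an escape function for every vector field in a $C^1$-small neighborhood, which contains $X$. Without this step, the escape-function input of your plan is unjustified.

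A secondary organizational difference worth noting: you mimic the proof of Proposition \ref{nonsmoothpotential} (pre-conjugation by $e^b$ to trivialize $\Re V$, then a pointwise sharp G\aa rding bound on the subprincipal symbol), whereas the paper's appendix expands $P_G=e^{-\Op(G)}Pe^{\Op(G)}$ by the Taylor-with-remainder formula in iterated commutators and then uses the \emph{time-averaged} eventual-positivity criterion of the radial estimates (as in \cite{DZ16a}), writing $s_1(X+V)=\sup_x\limsup_T\frac1T\int_0^T(\Re V-\tfrac12\mathrm{div}X)\circ\varphi_t\,dt$ directly without pre-conjugation. The two routes are equivalent up to $O(\eps)$ once one observes (as you implicitly do) that with a metric adapted over a long time window the pointwise rates $\nu_{u/s}^{\min}$ can be pushed arbitrarily close to the Lyapunov exponents $\lambda_{u/s}$; but you should make that step explicit, since otherwise your pointwise inequality $H_pG\le-\min(|u|\lambda_u,s\lambda_s)+C_0\eps(|u|+s)$ is not literally what the Faure--Sj\"ostrand construction delivers.
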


When the flow is $C^\infty$, $\delta$ can be chosen arbitrarily large. In the proof, we will assume that the reader is familiar with the arguments of \cite{FS11} and \cite{DZ16a}.
\begin{proof}
Using some Fredholm theoretic arguments, we have to find $G$ satisfying two requisites. First, to have a non-empty resolvant set, we need that
\begin{equation}\label{eq:non-empty-resolvant-set}
(e^{tP})_{t\in \R^+} \text{ is a strongly continuous semi-group on }\mathcal{H}_G.
\end{equation}
Second, we need to have some larger space $\mathcal{H}'_G$, with a compact injection $\mathcal{H}_G \hookrightarrow \mathcal{H}'_G$, so that for $\Re s> s_1 - \delta $, and $u\in \mathcal{H}_G$ such that $P u \in \mathcal{H}_G$,
\begin{equation}\label{eq:Fredholm-estimate}
\| u \|_{\mathcal{H}_G} \leq C \| (P-s) u\|_{\mathcal{H}_G}  + \| u \|_{\mathcal{H}'_G }.
\end{equation}

Let us recall what is meant by ``$G$ is a $\log$ symbol'': we assume that $G = m(x,\xi) \log |\xi|\mod |\xi|^{-1} $ when is $\xi$ is large, where $m$ is an order $0$ symbol, that we have to construct --- indeed, $\mathcal{H}_G$ only depends on $m$, and not on the lower order asymptotics of $G$. It is customary to assume that $m$ is constant in neighbourhoods of $E^\ast_{u,s,0}$.

The action of $P$ on $\mathcal{H}_G$ is equivalent to the action on $L^2$ of
\[
P_G := e^{-\Op(G)} P e^{\Op(G)}.
\]
When $P$ has smooth coefficients, the method of \cite{FS11} relies on analyzing $P_G$ with techniques of microlocal analysis. To mimic their arguments using the paradifferential toolbox of section \ref{Para}, we need to be able to perturb $P$ by operators bounded from $H^{s-r+1}$ to $H^s$ for $s\in (0,r-1)$. Provided that
\[
\max m_+ - \min m_- < r- 1,
\]
such a perturbation of $P$ yields a perturbation of $P_G$ which maps $L^2$ to some $H^\epsilon$ with $\epsilon>0$, and is thus compact. It follows that the inevitable remainders appearing in paradifferential construction will not have an effect on the Fredholm-ness or bounded-ness of $P$ acting on $\mathcal{H}_G$.

Our next step is to determine the nature of the operator $P_G$. Using a Taylor formula, we find
\[
\begin{split}
P_G &= P + [P,\Op(G)] + \frac{1}{2} [[P,\Op(G)],\Op(G)] + \dots \\
    & \hspace{50pt}+ \int_0^1 \frac{(1-t)^{\lfloor r \rfloor}}{\lfloor r \rfloor !} e^{-t \Op(G)} [\dots [ P,\underset{\lfloor r \rfloor + 1 \text{ times}}{\underbrace{\dots }}] e^{t\Op(G)} dt.
\end{split}
\]
We use the fact that $[P^\#,\Op(G)] \in {}^{r-1} \tilde{\Psi}^{0+}_{1,1}$, and similar statements for higher order brackets (which follow from Proposition \ref{composition_mfd}) to deduce that
\[
P_G = P + [P^\#, \Op(G)] + R,
\]
where $R$ is an operator that is compact on $L^2$ (mapping $L^2$ to some $H^\epsilon$ with $\epsilon>0$).

To satisfy \eqref{eq:non-empty-resolvant-set}, we compute
\[
\sigma(\Re P^\# + [P^\#,\Op(G)]) = -\frac{1}{2}\mathrm{div} X + \Re V + H_X G.
\]
Very much as in the smooth case, using the G\r{a}rding inequality, we deduce that \eqref{eq:non-empty-resolvant-set} is satisfied if
\[
H_X m \leq 0 \text{ everywhere}
\]
(this implies that $\Re P_G \leq C$ for some constant $C>0$).

Next, to obtain \eqref{eq:Fredholm-estimate}, we can use the proof of \cite{DZ16a} (in particular Proposition 3.4 therein). It relies on using the elliptic parametrix of Proposition \ref{paramell}, and the propagation estimates of section 2.3. We will not go into more details, except that the crucial point in the proof will be that estimate \eqref{eq:Fredholm-estimate} holds for $s$ when there exists $T>0$ such that for $|\xi|$ large enough,
\[
\frac{1}{T} \int_0^T \sigma(\Re P^\# + [P^\#,\Op(G)]) \circ \Phi_t\  dt < \Re s;
\]
this is what will appear in the conditions of ``eventual positivity/negativity'' at the source and sink.

Using the change of variable formula, we can compute that
\[
s_1(X+V) = \sup_x \limsup_T \frac{1}{T}\int_0^T \left(\Re V - \frac{1}{2}\mathrm{div}X \right)\circ\varphi_t \ dt.
\]
The proof of the theorem is thus complete, if we can build a smooth function $m$ such that
\begin{equation}\label{eq:condition-m-positivite}
-\delta:=\sup_x \limsup_{|\xi|\to \infty\text{ in a neighbourhood of $E^\ast_u\oplus E^\ast_s$}} \limsup_{T\to+\infty} \frac{1}{T} \int_0^T H_X G \circ \Phi_t\  dt < 0.
\end{equation}
Since $m$ is assumed to be constant near $E^\ast_u$ and $E^\ast_s$, we denote these values by $-m_u<0$ and $m_s>0$. The assumption that $H_X m \leq 0$ implies that $-m_u \leq m \leq m_s$, and thus we need to ensure that $m_u + m_s < r-1$.

Some elementary computations then show that if we can find such an $m$, the value of $\delta$ is given by
\[
\delta = - \max\left( - m_s \lambda_s^\ast, - m_u \lambda_u^\ast \right),
\]
where $\lambda_s^\ast$ is the slowest contraction rate on $E^\ast_s$, and $\lambda_u^\ast$ the slowest expansion rate on $E^\ast_u$. By duality, $\lambda_s^\ast = \lambda_u$ and $\lambda_u^\ast = \lambda_s$, the slowest exponents of dilatation and contraction respectively on $E_u$ and $E_s$. Certainly, if $m$ satisfies \eqref{eq:condition-m-positivite}, then so does $a m + b$ if $a>0$. In particular, optimizing under the constraint $m_u + m_s < r-1$, we deduce that we can get $\delta$ arbitrarily close to the upper bound
\[
(r-1)\frac{\lambda_u \lambda_s}{\lambda_u+\lambda_s}.
\]

Having found the criteria that $m$ has to satisfy, let us determine one such function. This is the only delicate point. Indeed, the usual construction of $m$ uses the flow of $H_X$ itself on $\partial T^\ast M$, which is here only $C^{r-1}$.

For this, we can use the perturbative argument of \cite{Bon18} (this will yield a function with $m_u= m_s = 1$, which we can then rescale to satisfy $m_u+m_s < r-1$). For $\eta_0>0$ we can choose $X_0$ a $C^\infty$ vector field, so that $\| X - X_0\|_{C^r} < \eta_0$. According to Lemma 3 in \cite{Bon18}, there exists $\eta>0$ and $m\in C^\infty(\partial T^\ast M)$ such that for all $C^1$ vector fields $Y$ such that $\| Y - X_0\|< \eta$, $m$ is a suitable escape function for $Y$. We will be done if we can prove that $\eta > \eta_0$. For this, it suffices to prove that the $\eta$ of Lemma 3 can be chosen locally uniformly \emph{in $C^1$ topology}.

Inspecting the proof, we find that the $\eta$ depends on $X_0$ through its $C^1$ norm, a lower bound on the angle between its stable and unstable directions, and its hyperbolicity constants. All these quantities can be controlled uniformly in small $C^1$ open sets of Anosov vector fields, and this concludes the proof.
\end{proof}

\bibliography{Para}
\bibliographystyle{amsalpha}

\end{document}